\tikzset{->-/.style={decoration={  markings,  mark=at position #1 with
    {\arrow{>}}},postaction={decorate}}}
\tikzset{-<-/.style={decoration={  markings,  mark=at position #1 with
    {\arrow{<}}},postaction={decorate}}}
\newcommand{\sgn}{\operatorname{sgn}}
\renewcommand{\mod}{\operatorname{mod}}
\newcommand{\ise}{\operatorname{ise}}
\newcommand{\mfx}{\mathbf{x}}
\newcommand{\mcA}{\mathcal{A}}
\newcommand{\mcF}{\mathcal{F}}
\newcommand{\mcL}{\mathcal{L}}
\newcommand{\mcT}{\mathcal{T}}
\newcommand{\mcU}{\mathcal{U}}
\newcommand{\mbN}{\mathbb{N}}
\newcommand{\mbQ}{\mathbb{Q}}
\newcommand{\mbR}{\mathbb{R}}
\newcommand{\mbT}{\mathbb{T}}
\newcommand{\mbZ}{\mathbb{Z}}
\theoremstyle{plain}
\newtheorem{theorem}{Theorem}[section]
\newtheorem{lemma}[theorem]{Lemma}
\newtheorem{corollary}[theorem]{Corollary}
\newtheorem{proposition}[theorem]{Proposition}
\theoremstyle{definition}
\newtheorem{definition}[theorem]{Definition}
\newtheorem{example}[theorem]{Example}
\newtheorem{remark}[theorem]{Remark}
\numberwithin{equation}{section}
\newtheorem{definition-proposition}[theorem]{Definition-Proposition}
\begin{document}

\title {A cluster theory approach from mutation invariants to Diophantine equations}

\date{\today}

\author{Zhichao Chen}
\address{School of Mathematical Sciences\\ University of Science and Technology of China \\ Hefei, Anhui 230026, P. R. China. \& Graduate School of Mathematics\\ nagoya University\\Chikusa-ku\\ Nagoya\\464-8602\\ Japan.}
\email{czc98@mail.ustc.edu.cn}
\author{Zixu Li}
\address{Lizx: Yancheng Institute of Technology,
    224051, YanCheng, Jiangsu,
    China}
\email{lizx19@tsinghua.org.cn}
\maketitle

%=====================================
\begin{abstract}
In this paper, we define and classify the sign-equivalent exchange matrices. We give a Diophantine explanation for the differences between rank 2 cluster algebras of finite type and affine type based on \cite{CL24}. We classify the positive integer points of the Markov mutation invariant and its variant. As an application, several classes of Diophantine equations with cluster algebraic structures are exhibited.  \\\\
	Keywords: Sign-equivalent, Diophantine explanation, Cluster algebras, Integer points.\\
	2020 Mathematics Subject Classification: 13F60, 11D09, 11D25. 
\end{abstract}
%=======================================
\tableofcontents
%=======================================
\section{Introduction}
Cluster algebras are important commutative algebras with different generators and relations. They were first introduced in \cite{FZ02, FZ03} to investigate the total positivity of Lie groups and canonical bases of quantum groups. Nowadays, cluster algebras are closely related to different subjects in mathematics, such as  representation theory \cite{BMRRT06, BIRS09, HL10, KY11}, higher Teichm{\"u}ller theory \cite{FG09}, integrable system \cite{KNS11}, Poisson geometry \cite{GSV12},
	 commutative algebras \cite{Mul13, Mul14}, combinatorics \cite{FR05, NZ12, LLZ14, RS18, GHKK18, FG19} and number theory \cite{Pro20, BBH11, PZ12, Lam16, LLRS23, Hua22, GM23, BL24, CL24}.

The relations between cluster algebras and Diophantine equations were firstly discovered by Propp \cite{Pro20}. Later, Beineke-Br{\"u}stle-Hille \cite{BBH11}, Peng-Zhang \cite{PZ12}, Lee-Li-Rabideau-Schiffler \cite{LLRS23} and Huang \cite{Hua22} studied some properties and conjectures about the famous Markov equation
\begin{align}
	x_1^2+x_2^2+x_3^2=3x_1x_2x_3,\label{markov 0}
\end{align} whose all solutions can one-to-one correspond to all clusters of the once-punctured torus cluster algebra. On this basis, 
Gyoda-Matsushita \cite{GM23} solved the generalized Markov equations
\begin{align}
x_1^2+x_2^2+x_3^2+k_1x_1x_2+k_2x_2x_3+k_3x_1x_3=(3+k_1+k_2+k_3)x_1x_2x_3\label{GM-equation 0}
\end{align} and studied their structures of generalized cluster algebras.
In addition,
Lampe \cite{Lam16} proved that all the solutions
to a variant of Markov Diophantine equation 
\begin{align}
x_1^2+x_2^4+x_3^4+2x_1x_2^2+2x_1x_3^2=7x_1x_2^2x_3^2\label{lampe-equation 0}
\end{align} can be generated by the initial solution $(1,1,1)$ through finite cluster mutations. Afterwards, Bao-Li \cite{BL24} followed his work and gave a criterion to determine  the solutions in the orbit of the initial solution under the actions of a certain group. Then, Chen-Li \cite{CL24} defined the mutation invariants and classified finite type mutation invariants of rank 2 as follows:
\begin{align}\label{main formula 0}
		\mcT(x_1,x_2)=&{}\Phi(F(c_{1;1}(x_1,x_2),c_{2;1}(x_1,x_2)),\cdots,F(c_{1;m}(x_1,x_2),c_{2;m}(x_1,x_2))),
	\end{align} where
	$(c_{1;i}(x_1,x_2),c_{2;i}(x_1,x_2))^m_{i=1}$ are $m$ distinct clusters, $\Phi(X_1,\cdots,X_m)$ is a symmetric polynomial with $m$ variables over $\mbQ$ and $F(X_1,X_2)$ is a rational function.

In this paper, we study the properties of diophantine equations corresponding to the mutation invariants \eqref{main formula 0} and exhibit the relation between them.
Firstly, to find more mutation invariants of cluster algebras, it is crucial to classify the sign-equivalent exchange matrices since their mutation rules are invariant, see \Cref{def sign} and \Cref{IMR rules}. It is a special class of finite mutation type, see \Cref{def finite mutation type}. The cluster algebras of finite mutation type were studied via  block-decomposable quivers and unfoldings in \cite{FST1, FST2}. Here, we calculate and classify all the irreducible sign-equivalent exchange matrices by cluster mutation rules as the following.
\begin{theorem}[\Cref{main}]
	All the irreducible sign-equivalent exchange matrices are as \Cref{ta1}.
\end{theorem}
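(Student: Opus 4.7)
My plan is to translate the sign-equivalence condition into concrete entry-wise constraints on an exchange matrix $B = (b_{ij})$, reduce to a finite enumeration via finite mutation type, and then verify each surviving candidate by exhibiting the sign pattern explicitly. By hypothesis sign-equivalence means that for every mutation direction $k$, the matrix $\mu_k(B)$ equals $B$ after some consistent row/column sign reversals (this is what the reference to \Cref{IMR rules} suggests). Writing the mutation rule
\[
\mu_k(B)_{ij} = \begin{cases} -b_{ij}, & i=k \text{ or } j=k, \\ b_{ij} + \tfrac{|b_{ik}|b_{kj} + b_{ik}|b_{kj}|}{2}, & \text{otherwise}, \end{cases}
\]
the condition forces each correction term $[b_{ik}]_+[b_{kj}]_+ + [-b_{ik}]_+[-b_{kj}]_+$ to match a prescribed sign flip of $b_{ij}$. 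This gives strong divisibility/sign constraints on arrows sharing a common endpoint.

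\textbf{Rank 2.} In rank $2$ the mutation merely negates the two nonzero off-diagonal entries, so \emph{every} rank-$2$ exchange matrix is trivially sign-equivalent. This gives one family $\begin{pmatrix} 0 & b \\ -c & 0 \end{pmatrix}$ in Table 1.

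\textbf{Higher rank.} The first observation is that sign-equivalence is preserved along any mutation orbit, so every sign-equivalent matrix has finite mutation type. I can therefore invoke the Felikson--Shapiro--Tumarkin classification to restrict attention to finitely many shapes (the finite and affine Dynkin quivers, the exceptional mutation-finite quivers, and the block-decomposable/surface quivers). For each candidate I mutate at each vertex $k$ and ask whether the correction to every non-pivot entry is either $0$ or exactly $\pm 2 b_{ij}$ (so that the result agrees with $B$ after toggling some row/column signs). The correction $[b_{ik}]_+ b_{kj} + b_{ik}[b_{kj}]_-$ can vanish only when vertex $k$ is a \emph{source} or \emph{sink} with respect to $i,j$; combined with the requirement that this hold at every mutable vertex, one quickly eliminates trees, any quiver with a nontrivial cycle of length $\geq 4$, and most exceptional shapes, leaving only the cyclic triangle with symmetric weights (the Markov-type quiver and its generalisations) plus a short list of low-rank configurations.

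\textbf{Main obstacle and verification.} The hard part is showing exhaustiveness: ruling out every shape outside Table~\ref{ta1} simultaneously in all mutation directions, because a quiver can satisfy the sign-equivalence identity at one vertex while failing it at another. I plan to handle this by an induction on rank, proving that if $B$ is sign-equivalent and irreducible with $n \geq 3$, then every pair of vertices must already be connected by a cyclic triangle with matching absolute values -- this forces the underlying diagram to be a single triangle (possibly with multiplicities matching the Markov pattern). Once the shape is pinned down, the entry values themselves are constrained by a Diophantine condition of the form $b c = $ something symmetric, yielding the finite list in Table~\ref{ta1}. Finally, I would verify each listed matrix is genuinely sign-equivalent by writing down the explicit sign-flip diagonal matrix $D_k$ with $D_k \mu_k(B) D_k = B$ at every vertex $k$.
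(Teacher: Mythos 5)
There is a genuine gap, and it starts at the very first step: you have misread the definition of sign-equivalence. In this paper (\Cref{def sign}) $B$ is sign-equivalent when its entire mutation class is $[B]=\{B,-B\}$, i.e.\ every mutation returns literally $B$ or its global negative $-B$. You instead take the condition to be ``$\mu_k(B)$ equals $B$ after some consistent row/column sign reversals,'' i.e.\ $D_k\mu_k(B)D_k=B$ for a diagonal sign matrix $D_k$. That is a strictly weaker condition and it changes the answer: for example, mutating an acyclic quiver at a source or sink only negates the entries in that row and column, so any bipartite orientation (say type $A_3$) passes your test at every vertex, yet its mutation class is much larger than $\{B,-B\}$ and it does not appear in \Cref{ta1}. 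The correct reduction, which your argument never reaches, is that irreducibility forces $\mu_k(B)=-B$ \emph{exactly} for every $k$ (if $\mu_k(B)=B$ then the $k$-th row and column vanish and $B$ is reducible); this is the paper's \Cref{lem3} and it is what turns the problem into a concrete system of equations on the entries.

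Beyond that, the classification itself is only a plan, not a proof. For $n\geq 4$ you appeal to the Felikson--Shapiro--Tumarkin classification and then assert that one ``quickly eliminates'' everything but triangles; nothing is actually eliminated, and the FST route is both heavier than needed and not obviously conclusive for skew-symmetrizable matrices. The paper instead argues directly from $\mu_k(B)=-B$: if some off-diagonal entry vanishes the matrix is reducible, and if none vanishes then for $n\geq4$ a pigeonhole on the signs of the entries in the first row and column produces indices $i_1\neq i_2$ with $b_{i_1i_2}'=b_{i_1i_2}$, forcing $b_{i_1i_2}=0$, a contradiction. For $n=3$ the condition $\mu_i(B)=-B$ for $i=1,2,3$ yields six explicit equations (e.g.\ $2d=bc$, $2f=-ac$, \dots) whose product gives $abcdef=-64$, and a short case analysis on $a\in\{1,2,4\}$ produces exactly the matrices in \Cref{ta1}; your sketch gestures at ``a Diophantine condition of the form $bc=$ something symmetric'' but never derives or solves it. Finally, your proposed verification step $D_k\mu_k(B)D_k=B$ would certify the wrong property; for the listed matrices one must check $\mu_k(B)=-B$ directly.
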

\begin{table}[ht!] \caption{A list of all the irreducible sign-equivalent exchange matrices}
\renewcommand{\arraystretch}{0.82}
\begin{center}
		\scalebox{0.85}{\begin{tabular}{|c|c|}
						\hline  
			{\begin{tabular}{c}\\ Rank $n$ \\
					\vspace{0.01mm}
			\end{tabular}} & The irreducible sign-equivalent exchange matrices  \\
			\hline 
			$n=1$ & {\begin{tabular}{c} \\ None. 
				\\	 \vspace{0.01mm} 
			\end{tabular}} \\
			\hline
			$n=2$ & {\begin{tabular}{c}\\ $\sigma\begin{pmatrix}
	0 & b \\ -c & 0 \end{pmatrix}$, where $b,c\in \mbN_+$ and $\sigma\in \mathfrak{S}_2$. \\
					\vspace{0.01mm}
			\end{tabular}} \\
			\hline

			\multirow{11}{*}{$n=3$} & {\begin{tabular}{c}\\ $\sigma\begin{pmatrix}0 & 2 & -2\\ -2 & 0 & 2\\ 2 & -2 & 0\end{pmatrix}$, where $\sigma\in \mathfrak{S}_3$. \\
					\vspace{0.01mm}
			\end{tabular}} \\
			\cline{2-2}
			& {\begin{tabular}{c}\\ $\sigma\begin{pmatrix}0 & 1 & -1\\ -4 & 0 & 2\\ 4 & -2 & 0\end{pmatrix}$, where $\sigma\in \mathfrak{S}_3$. \\
					\vspace{0.01mm}
			\end{tabular}} \\
			\cline{2-2} & {\begin{tabular}{c}\\ $\sigma\begin{pmatrix}0 & -4 & 4\\ 1 & 0 & -2\\ -1 & 2 & 0\end{pmatrix}$, where $\sigma\in \mathfrak{S}_3$. \\
					\vspace{0.01mm}
			\end{tabular}}  \\
			\hline 
						$n\geq 4$ & {\begin{tabular}{c} \\ None. 
					 \\ \vspace{0.01mm}
			\end{tabular}} \\
			\hline 
		\end{tabular}}\\ \
		
	\end{center} \label{ta1}
\end{table}

For the rank 2 cluster algebras, based on the formula \eqref{main formula 0} and motivated by the Laurent phenomenon and positivity of cluster algebras, we focus on the cases that $\Phi(u_1,\cdots,u_m)\in \mbQ_{\geq 0}[u_1,\cdots,u_m]$ and $F(X_1,X_2)\in \mbQ_{\geq 0}[X_1,X_2]$. Then, the corresponding Diophantine equations are 
\begin{align}
	\mcT(x_1,x_2)=\mcT(a,b), \label{finite equation 0}
\end{align} where $(a,b)\in \mbN_+^2$,
and we prove the finiteness of the solutions. We also provide a Diophantine explanation for the differences between finite type and affine type cluster algebras of rank 2, see \Cref{diophantine explanation}.
\begin{theorem}[\Cref{finite solution}]
There are only finite positive integer solutions to the \Cref{finite equation 0}.
\end{theorem}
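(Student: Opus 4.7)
Proof Proposal:

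The plan is to establish that $\mcT(x_1,x_2)\to\infty$ whenever $\max(x_1,x_2)\to\infty$ on $\mbN_+^2$. This confines the positive-integer solutions of $\mcT(x_1,x_2)=\mcT(a,b)$ to a bounded region of $\mbR_{>0}^2$, so that $\{(x_1,x_2)\in\mbN_+^2:\mcT(x_1,x_2)=\mcT(a,b)\}$ is finite.

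I would first combine the Laurent phenomenon \cite{FZ02} with the positivity theorem for cluster algebras to observe that each cluster variable $c_{k;i}(x_1,x_2)$ is a Laurent polynomial in $x_1,x_2$ with nonnegative integer coefficients; together with the hypotheses $\Phi\in\mbQ_{\geq 0}[u_1,\dots,u_m]$ and $F\in\mbQ_{\geq 0}[X_1,X_2]$, this makes $\mcT(x_1,x_2)$ a Laurent polynomial with nonnegative rational coefficients, so in particular $\mcT\geq 0$ on $\mbR_{>0}^2$. Excluding the degenerate case in which $\mcT$ is constant, one may assume $\Phi$ and $F$ are both nonconstant. The core of the argument is then a direction-by-direction analysis: on $\mbN_+^2$, the condition $\max(x_1,x_2)\to\infty$ forces either $x_1\to\infty$ with $x_2$ bounded, or symmetrically $x_2\to\infty$ with $x_1$ bounded. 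In each such direction I would inspect the explicit list of cluster variables in each finite rank $2$ type and locate a cluster $(c_{1;j_0},c_{2;j_0})$ whose coordinate in the position on which $F$ depends nontrivially tends to $+\infty$. Because $\Phi$ is a nonconstant symmetric polynomial with nonnegative coefficients, one extracts a lower bound
\[\mcT(x_1,x_2)\geq c'\,F(c_{1;j_0}(x_1,x_2),c_{2;j_0}(x_1,x_2))^k\]
for some constants $c'>0$ and $k\geq 1$; the right-hand side tends to infinity along the chosen direction, which yields the desired growth.

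The main obstacle is the structural claim behind this direction-by-direction analysis, namely that for each of the four possibilities (direction of growth $\times$ coordinate position in a cluster) at least one cluster variable appearing in the required position blows up. In $A_2$ this is immediate: the ordered-cluster orbit has length $2(h+2)=10$ and contains the swapped seed $(x_2,x_1)$, so both $F(x_1,x_2)$ and $F(x_2,x_1)$ appear among the arguments of $\Phi$ and directly cover both axis directions. In the asymmetric types $B_2/C_2$ and $G_2$, however, the ordered-cluster orbit has length only $h+2$, does not contain $(x_2,x_1)$, and in fact distributes its first and second coordinates over disjoint subsets of the $h+2$ cluster variables. The structural claim must therefore be verified type-by-type by listing those cluster variables as explicit Laurent polynomials (readily computed from the exchange relations, and available from \cite{CL24}) and checking that each growth direction is detected by at least one first-coordinate and one second-coordinate cluster variable. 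Once this verification is in hand, the growth estimate $\mcT(x_1,x_2)\to\infty$ follows, and the finiteness of positive-integer solutions to $\mcT(x_1,x_2)=\mcT(a,b)$ is immediate.
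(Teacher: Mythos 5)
Your overall strategy---show that $\mcT$ is coercive on $\mbN_+^2$, i.e.\ $\mcT(x_1,x_2)\to\infty$ as $\max(x_1,x_2)\to\infty$, so that the level set $\mcT=\mcT(a,b)$ is bounded and hence finite---is sound in principle and genuinely different from the paper's, which only extracts a single monomial summand of $\mcT$ (the ``reductive'' property) to bound one of the two variables. However, the proposal has two genuine gaps. The first is the inequality $\mcT(x_1,x_2)\geq c'\,F(c_{1;j_0}(x_1,x_2),c_{2;j_0}(x_1,x_2))^k$: this does \emph{not} follow from $\Phi$ being a non-constant symmetric polynomial with non-negative coefficients; it requires $\Phi$ to contain a pure power $u_{j_0}^k$ as a monomial. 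When every monomial of $\Phi$ involves at least two distinct variables (e.g.\ $\Phi=\sum_{i<j}u_iu_j$ or $\Phi=\prod_i u_i$), the best available lower bound is a product $\prod_j F(c_{1;l_j},c_{2;l_j})^{\alpha_j}$, and individual factors can tend to $0$ along the very direction in which your chosen factor blows up (in type $A_2$, $c_{1;2}=(x_2+1)/x_1\to 0$ as $x_1\to\infty$ with $x_2$ fixed), so one blowing-up factor does not force the product to blow up. You have therefore identified the wrong obstacle: the issue is not whether some cluster variable grows in each direction, but whether the specific products of $F$-values dictated by the monomials of $\Phi$ grow. This is exactly where the paper spends most of its proof: it separates the case where $\Phi$ has a pure-power monomial (easy, as you say) from the case where it does not, and in the latter it exhibits, for each type, an explicit ordering of the clusters along which every partial product of the corresponding $F$-values remains reductive. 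Your proposal contains no substitute for that step.

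The second gap is the omission of type $A_1\times A_1$ ($m=4$), where coercivity actually fails: taking $F(X_1,X_2)=X_1$ and $\Phi=u_1+u_2+u_3+u_4$ gives $\mcT=2x_1+4/x_1$, which is independent of $x_2$ and so stays bounded as $x_2\to\infty$ with $x_1$ fixed. (This example is in fact delicate for the statement itself, since the level set $\mcT=6$ then contains $(1,n)$ and $(2,n)$ for all $n\in\mbN_+$; the paper treats $A_1\times A_1$ as a separate sub-case precisely because several of the relevant products of cluster variables degenerate to constants.) A coercivity argument cannot cover this type without substantial modification, and your case analysis does not mention it. A smaller point: $\max(x_1,x_2)\to\infty$ does not force one variable to remain bounded; you also need to treat the (admittedly easier) regime in which both variables tend to infinity.
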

For the rank 3 cluster algebras, recall that Lampe \cite{Lam16} exhibited two Laurent mutation invariants of $\mcA_{P}$ and $\mcA_{L}$ as follows (also see \eqref{2-matrix} and \eqref{4-matrix}): 
\begin{align}
	\mcT_1(x_1,x_2,x_3)=\dfrac{x_1^2+x_2^2+x_3^2}{x_1x_2x_3},\label{Lampe mutation invariant 0}
\end{align}
\begin{align}
\mcT_2(x_1,x_2,x_3)=\dfrac{x_1^2+x_2^4+x_3^4+2x_1x_2^2+2x_1x_3^2}{x_1x_2^2x_3^2}.\label{Lampe mutation invariant 1}\end{align}
A natural question is to find all the positive integer points of the mutation invariants, that is $(x_1,x_2,x_3,\mcT_i(x_1,x_2,x_3))\in \mbN_+^4$. In fact, it is equivalent to solve the Diophantine equations $\mcT_{i}(x_1,x_2,x_3)=k$, where $i=1,2$ and $k\in \mbN$. Note that Aigner found all the positive integer points of \eqref{Lampe mutation invariant 0} by use of the fact that three components in a Markov triple are pairwise relatively prime, cf. \cite[Proposition 2.2]{Aig13}. However, we will give a different proof by use of cluster mutations.
\begin{theorem}[Aigner, \Cref{Aigner's theorem}]
	Let $k\in \mbN$ and the Diophantine equation be as follows:
	\begin{align}
		x_1^2+x_2^2+x_3^2=kx_1x_2x_3. \label{Markov's equation 0}
	\end{align}
	Then, it has positive integer solutions if and only if $k=1$ or $k=3$.
\end{theorem}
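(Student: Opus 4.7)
The plan is to carry out Vieta-jumping realized as cluster mutation, exploiting that $\mcT_1$ in \eqref{Lampe mutation invariant 0} is a mutation invariant. Fixing $k\in\mbN_+$, the mutation $\mu_i$ sends a positive integer solution $(x_1,x_2,x_3)$ to $(\dots,x_i',\dots)$, where $x_i' = k\prod_{j\ne i}x_j - x_i$ is the other root of the defining quadratic in $x_i$; the identity $x_ix_i' = \sum_{j\ne i}x_j^2 > 0$ guarantees $x_i'\in\mbN_+$. The case $k=0$ is ruled out immediately since $x_1^2+x_2^2+x_3^2>0$. For the sufficient direction, direct inspection shows $(1,1,1)$ solves \eqref{Markov's equation 0} when $k=3$ and $(3,3,3)$ solves it when $k=1$.

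For the converse, I would set up a descent. Assume $(a,b,c)\in\mbN_+^3$ is a solution with $a\ge b\ge c$. The identity $aa' = b^2+c^2$ shows that $a' < a$ iff $a^2 > b^2+c^2$, so iteratively mutating the largest coordinate strictly decreases the sum $a+b+c$ as long as this inequality holds. Since positive integer sums are well-ordered, the descent terminates at a triple satisfying $a^2\le b^2+c^2$; moreover, if $a>b$ then mutating $b$ or $c$ strictly increases that coordinate (since $bb'=a^2+c^2>b^2$, and similarly for $c$), so terminal triples are genuine local minima under the full mutation group action.

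At the minimum, combining the equation with $a^2\le b^2+c^2\le 2b^2$ gives
\[
kabc \;=\; a^2+b^2+c^2 \;\le\; 2(b^2+c^2) \;\le\; 4b^2,
\]
whence $kac\le 4b\le 4a$, so $kc\le 4$. This leaves only eight pairs $(k,c)$. For each, the constraint $b\le a\le\sqrt{b^2+c^2}$ forces $a=b$ except in finitely many subcases where $(b+1)^2\le b^2+c^2$; substituting $a=b$ reduces \eqref{Markov's equation 0} to $2b^2+c^2=kb^2c$, whose only positive integer solutions are $(b,c,k)=(1,1,3)$ and $(b,c,k)=(3,3,1)$, giving the minima $(1,1,1)$ and $(3,3,3)$. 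The remaining $(k,c)$ pairs yield immediate contradictions (e.g.\ $(a-b)^2=-c^2$ when $k=2$, and direct arithmetic checks for the sporadic $a=b+1$ possibilities in the $c=3,4$ cases), so any positive integer solution lies in the mutation orbit of $(1,1,1)$ or $(3,3,3)$, forcing $k\in\{1,3\}$.

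The main obstacle is not the descent, which is transparent once $\mcT_1$ is known to be mutation-invariant, but the finite base-case check: although the bound $kc\le 4$ severely restricts the options, for each pair one must dispatch both the generic $a=b$ subcase and the sporadic $a=b+1$ possibilities without missing any. The cluster-mutation viewpoint contributes the integrality $x_i'\in\mbN_+$ and a well-defined notion of minimum, so the argument sidesteps the pairwise-coprimality input used in Aigner's original proof.
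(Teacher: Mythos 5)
Your proposal is correct, and it runs on the same engine as the paper's proof --- Vieta jumping realized as cluster mutation in $\mcA_P$, with integrality and positivity of the jumped coordinate coming for free --- but the endgame is genuinely different. The paper argues by contradiction separately for $k=2$ and $k\geq 4$: it first shows every solution has all coordinates $\geq 2$ (for $k\geq 4$ this requires the auxiliary rank-2 result, \Cref{reduced Aigner's theorem}, to exclude a coordinate equal to $1$), and then shows the maximum strictly decreases under every mutation at the maximal direction, so the descent can never terminate. You instead let the descent terminate and classify the terminal triples directly: the observation that a minimal triple with $a\geq b\geq c$ satisfies $a^2\leq b^2+c^2$, hence $kabc\leq 4b^2$ and so $kc\leq 4$, handles all $k$ uniformly, avoids the rank-2 lemma entirely, and as a byproduct shows every solution lies in the mutation orbit of $(1,1,1)$ or $(3,3,3)$ (which the paper obtains separately, via Lampe's theorem and \Cref{1Markov}). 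The cost is a finite base-case check that you state somewhat tersely: the reduction $a=b$ gives $c^2=b^2(kc-2)$, forcing $kc=3$ and $c=b$ (hence the two minima) since $kc=4$ would give $c^2=2b^2$; and the sporadic $a=b+1$ cases exist only for $(k,c)=(1,3)$ and $(1,4)$, where $2b+1\leq c^2$ bounds $b$ and direct substitution eliminates each candidate. Those checks do all go through, so the argument is complete; if you write it up, spell out that table rather than leaving it at ``e.g.''.
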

Furthermore, we find all the positive integer points of \eqref{Lampe mutation invariant 1} as follows by cluster mutations.
\begin{theorem}[\Cref{analogue}]
	Let $k\in \mbN$ and the Diophantine equation be as follows:
	\begin{align}
x_1^2+x_2^4+x_3^4+2x_1x_2^2+2x_1x_3^2=kx_1x_2^2x_3^2. \label{Lampe's equation 0}
	\end{align}
	Then, it has positive integer solutions if and only if $k=7$.
\end{theorem}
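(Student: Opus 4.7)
The ``if'' direction is immediate: substituting $(x_1,x_2,x_3)=(1,1,1)$ gives left-hand side $1+1+1+2+2=7$, so $k=7$ admits a positive integer solution. For the ``only if'' direction, the plan is infinite descent along the three cluster mutations of the cluster algebra $\mcA_L$ for which $\mcT_2$ is a mutation invariant. Viewing the equation as a quadratic in $x_1$, $x_2^2$ and $x_3^2$ respectively and applying Vieta, these mutations act on cluster variables by
\[
\mu_1(x_1)=\frac{x_2^4+x_3^4}{x_1},\qquad \mu_2(x_2)=\frac{x_1+x_3^2}{x_2},\qquad \mu_3(x_3)=\frac{x_1+x_2^2}{x_3}.
\]
I first verify that each $\mu_i$ sends positive integer solutions to positive integer solutions. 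For $\mu_1$, reading the equation modulo $x_1$ gives $x_1\mid x_2^4+x_3^4$. For $\mu_2$ and $\mu_3$, completing the square in the equation produces
\[(x_1+x_3^2)^2=x_2^2(kx_1x_3^2-x_2^2-2x_1), \qquad (x_1+x_2^2)^2=x_3^2(kx_1x_2^2-x_3^2-2x_1),\]
so the positive rational numbers $\mu_2(x_2)$ and $\mu_3(x_3)$ have integer squares and are therefore themselves positive integers. Since $\mcT_2$ is a mutation invariant, each $\mu_i$ also preserves the value of $k$.

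As descent function I take $\sigma(x_1,x_2,x_3)=x_1+x_2^2+x_3^2$. The exchange identities $x_1x_1'=x_2^4+x_3^4$, $x_2x_2'=x_1+x_3^2$, $x_3x_3'=x_1+x_2^2$ show that $\mu_1$, $\mu_2$ and $\mu_3$ strictly decrease $\sigma$ precisely when $x_1^2>x_2^4+x_3^4$, $x_2^2>x_1+x_3^2$ and $x_3^2>x_1+x_2^2$ hold, respectively. The main step and principal obstacle is the base case: the only positive integer solution (for any $k\in\mbN$) at which none of these three strict inequalities holds is $(x_1,x_2,x_3)=(1,1,1)$.

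For that base case, I use the identity obtained by completing the square in the original equation,
\[(x_1+x_2^2+x_3^2)^2=(kx_1+2)\,x_2^2x_3^2,\]
which forces $m:=\sigma/(x_2x_3)$ to be a positive integer with $m^2=kx_1+2$. Assuming $x_2\geq x_3$ by the symmetry $x_2\leftrightarrow x_3$, failure of the $\mu_2$-descent condition gives $x_1\geq x_2^2-x_3^2$ and hence $m\geq 2x_2/x_3$, while failure of the $\mu_1$-descent condition together with $\sqrt{x_2^4+x_3^4}<x_2^2+x_3^2$ gives $x_1<x_2^2+x_3^2$ and hence $m<2(x_2/x_3+x_3/x_2)$. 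Thus $m$ lies in an interval of length at most $2$, leaving only a handful of candidate integer values indexed by $r=x_2/x_3$. A finite case analysis on $r$ (splitting into $r=1$, $r\in\mbZ_{\geq 2}$, and $r=p/q$ with $\gcd(p,q)=1$ and $q\geq 2$), combined with the divisibility $x_1\mid m^2-2$ forced by $k\in\mbN$, eliminates every candidate except $(1,1,1)$. The cleanest illustration is $x_2=x_3$, for which the interval becomes $[2,4)$ and $m\in\{2,3\}$: the value $m=2$ forces $x_1=0$, whereas $m=3$ gives $x_1=x_2^2$ with $k=7/x_2^2\in\mbN$, which in turn forces $x_2=1$ and $k=7$.

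The proof concludes by well-ordering: starting from any positive integer solution, iteratively applying whichever mutation $\mu_i$ strictly decreases $\sigma$ must terminate at a triple satisfying none of the three strict inequalities, which by the base case is $(1,1,1)$. Since each mutation preserves $k$, the initial value of $k$ equals $\mcT_2(1,1,1)=7$.
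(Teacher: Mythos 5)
Your overall strategy is sound and genuinely different from the paper's. The paper substitutes $X_1=x_1$, $X_2=x_2^2$, $X_3=x_3^2$, invokes its rank-2 lemma to force $X_2,X_3\geq 4$, bounds $X_1$ from below for $k=1,2,3$ by congruence arguments modulo $3,4,5,6$, and then runs a descent on the \emph{maximal} component with a separate sign computation $f(B)<0$, $g(A)<0$, $g(C)<0$ for each value of $k$; your single descent function $\sigma=x_1+x_2^2+x_3^2$ together with the identity $(x_1+x_2^2+x_3^2)^2=(kx_1+2)x_2^2x_3^2$ is cleaner and treats all $k$ uniformly. Your verification that the mutations preserve integrality (via $x_1\mid x_2^4+x_3^4$ and the fact that a rational with integer square is an integer) and the equivalence of ``$\sigma$ decreases under $\mu_i$'' with the three stated inequalities are both correct.

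The gap is in the base case, which you yourself identify as the principal obstacle. Your ``finite case analysis on $r$'' is not finite as stated: $r=x_2/x_3$ ranges over infinitely many rationals $p/q$, and for each one you must also control the common factor $d$ with $x_2=pd$, $x_3=qd$. The tools you name do suffice, but the reduction to finitely many cases is the actual content and is missing. Concretely: for $r\in\mbZ_{\geq 2}$ the window forces $m=2r$, hence $x_1=x_2^2-x_3^2=(r^2-1)x_3^2$ and $k(r^2-1)x_3^2=4r^2-2$, so $(r^2-1)\mid 2$, a contradiction --- this is a uniform argument, not a finite check. For $q=2$ and $p\geq 5$ one similarly gets $m=p$ and $(p^2-4)\mid 2$; for $q\geq 3$ one must combine $x_1\geq(p^2-q^2)d^2$ with $x_1\leq m^2-2\leq 4p^2/q^2+4q^2/p^2+6$ to force $d=1$ and bound $(p,q)$ to a short explicit list (roughly $p=q+1$ with $q\leq 7$, plus $(5,3)$), each member of which must then be killed by the divisibility $x_1\mid m^2-2$. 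None of this is routine enough to wave through with the single worked example $x_2=x_3$, so as written the ``only if'' direction is not established. Once the base case is completed along these lines, the well-ordering conclusion is fine.
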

As an application, we get two classes of Diophantine equations with cluster algebraic structures, that is, all their positive integer solutions can be generated by the initial solution $(1,1,1)$ through finite cluster mutations. The results generalize the results about two isolated Diophantine equations given by \cite[Theorem 2.3 \& Theorem2.6]{Lam16}.
\begin{proposition}[\Cref{Prop case}]
	Let $F(X)\in \mbZ[X]$ be a non-constant monic polynomial and $t\in \mbZ$. The Diophantine equation \begin{align}
		F\left(\dfrac{x_1^2+x_2^2+x_3^2}{x_1x_2x_3}\right)=F(t) \label{F222 0}
	\end{align} has positive integer solutions if and only if $F(t)=F(1)$ or $F(t)=F(3)$. Moreover, \begin{enumerate}[leftmargin=2em]
		\item if $F(1)=F(3)$, all the positive integer solutions can be obtained from the initial solutions $(1,1,1)$ and $(3,3,3)$ by finite cluster mutations of $\mcA_P$;
		\item if $F(1)\neq  F(3)$, all the positive integer solutions can be obtained from either the initial solution $(1,1,1)$ or the initial solution $(3,3,3)$ by finite cluster mutations of $\mcA_P$.
	\end{enumerate} 
\end{proposition}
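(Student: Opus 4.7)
The plan is to reduce the equation to the Markov-type family classified by \Cref{Aigner's theorem}, and then to identify its positive integer solutions with the cluster-mutation orbits of the two distinguished triples $(1,1,1)$ and $(3,3,3)$ in $\mcA_P$. The first key step is the observation that, for any positive integer triple $(x_1,x_2,x_3)$ satisfying $F(\mcT_1(x_1,x_2,x_3)) = F(t)$, the rational number
\[
k := \mcT_1(x_1,x_2,x_3) = \frac{x_1^2 + x_2^2 + x_3^2}{x_1 x_2 x_3}
\]
is a root of the \emph{monic} integer polynomial $F(X) - F(t) \in \mbZ[X]$; by the rational root theorem $k \in \mbN_+$. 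Then \Cref{Aigner's theorem} forces $k \in \{1,3\}$, so $F(t) \in \{F(1), F(3)\}$ is necessary. Sufficiency is immediate from $\mcT_1(1,1,1) = 3$ and $\mcT_1(3,3,3) = 1$.

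For the orbit description I would use that $\mcT_1$ is a mutation invariant of $\mcA_P$ coming from Lampe's formula \eqref{Lampe mutation invariant 0}: every cluster mutation $x_i \mapsto (x_j^2 + x_\ell^2)/x_i$ preserves each level set $\{\mcT_1 = k\}$, and on such a level set it coincides with the Vieta jump $x_i \mapsto k x_j x_\ell - x_i$ of the quadratic in $x_i$. A standard Markov descent on $\max(x_1,x_2,x_3)$ then shows that every positive integer solution of the $k = 3$ Markov equation lies in the cluster-mutation orbit of $(1,1,1)$. The $k = 1$ case reduces to the $k = 3$ case via the mutation-equivariant scaling bijection $(x_1,x_2,x_3) \leftrightarrow (3x_1, 3x_2, 3x_3)$ (that every positive integer solution of $x_1^2 + x_2^2 + x_3^2 = x_1 x_2 x_3$ has all coordinates divisible by $3$ is a quick $\bmod\,3$ check, and the mutation $y_i \mapsto (y_j^2+y_\ell^2)/y_i$ commutes with uniform scaling on the level set), so every such solution lies in the cluster-mutation orbit of $(3,3,3)$.

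The two assertions (1) and (2) then follow by a direct case split on whether $F(1) = F(3)$: if $F(1) = F(3) = F(t)$, both orbits contribute and together exhaust the solution set; if exactly one of $F(1), F(3)$ equals $F(t)$, the corresponding single orbit exhausts it, while if neither does, there are no solutions at all. I expect no substantial obstacle beyond the rational-root reduction that upgrades $k \in \mbQ_{>0}$ to $k \in \mbN_+$; once this is in hand the proof is an assembly of \Cref{Aigner's theorem} and classical Markov descent, recast in the cluster-mutation language of $\mcA_P$.
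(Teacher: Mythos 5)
Your proposal is correct and follows essentially the same route as the paper: the rational root theorem for the monic polynomial $F(X)-F(t)$ is exactly the paper's appeal to the integral closedness of $\mbZ$ in $\mbQ$, the reduction to $k\in\{1,3\}$ is \Cref{Aigner's theorem}, and your scaling bijection $(x_1,x_2,x_3)\leftrightarrow(3x_1,3x_2,3x_3)$ together with Markov descent reproduces the paper's \Cref{1Markov} and its citation of Lampe's theorem for the $k=3$ orbit.
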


\begin{proposition}[\Cref{Lampe case}]
	Let $F(X)\in \mbZ[X]$ be a non-constant monic polynomial and $t\in \mbZ$. The Diophantine equation \begin{align}
F\left(\dfrac{x_1^2+x_2^4+x_3^4+2x_1x_2^2+2x_1x_3^2}{x_1x_2^2x_3^2}\right)=F(t) \label{F124 0}
	\end{align} has positive integer solutions if and only if $F(t)=F(7)$. Under this case, all the positive integer solutions can be obtained from the initial solution $(1,1,1)$ by finite cluster mutations of $\mcA_L$.
\end{proposition}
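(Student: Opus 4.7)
The plan is to reduce the problem to \Cref{analogue} (the classification of positive integer points of $\mcT_2$) together with an application of the rational root theorem, followed by invoking the cluster-mutation generation result already available for the $k=7$ case.

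First I would address the necessity. Suppose $(x_1,x_2,x_3)\in \mbN_+^3$ satisfies \eqref{F124 0}. Set
\[
\alpha \;=\; \mcT_2(x_1,x_2,x_3) \;=\; \frac{x_1^2+x_2^4+x_3^4+2x_1x_2^2+2x_1x_3^2}{x_1x_2^2x_3^2}\;\in\;\mbQ_{>0}.
\]
Then $\alpha$ is a root of the polynomial $G(X):=F(X)-F(t)\in \mbZ[X]$, which is monic because $F$ is monic and $F(t)\in \mbZ$. By the rational root theorem applied to $G$, every rational root of $G$ must be an integer, so $\alpha\in \mbN_+$. Now \Cref{analogue} forces $\alpha=7$, and consequently $F(t)=F(7)$. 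This is the step that rules out all values of $t$ other than those solving $F(t)=F(7)$.

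Next I would verify sufficiency and exhibit the initial solution. If $F(t)=F(7)$, then the triple $(1,1,1)$ is automatically a positive integer solution of \eqref{F124 0}, because a direct substitution gives $\mcT_2(1,1,1)=(1+1+1+2+2)/1 = 7$, and therefore $F(\mcT_2(1,1,1))=F(7)=F(t)$.

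Finally, for the cluster-algebraic description of the full solution set, I would note that by the necessity step every positive integer solution $(x_1,x_2,x_3)$ of \eqref{F124 0} is in fact a positive integer solution of the Lampe equation \eqref{Lampe's equation 0} with $k=7$. By Lampe's theorem (cited in the introduction as \cite[Theorem 2.6]{Lam16}), which is exactly the content the proposition is meant to generalize, every positive integer solution of \eqref{Lampe's equation 0} is obtained from $(1,1,1)$ by finitely many cluster mutations of $\mcA_L$. Combining this with the fact that $(1,1,1)$ itself is a solution of \eqref{F124 0} and that cluster mutations preserve the value of the Laurent mutation invariant $\mcT_2$, we conclude that all positive integer solutions of \eqref{F124 0} lie in the mutation orbit of $(1,1,1)$ in $\mcA_L$.

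The only subtle point in the argument is the use of the rational root theorem, which crucially depends on the hypothesis that $F$ is \emph{monic} with integer coefficients; without monicity, $\alpha$ could be a non-integer rational and the reduction to \Cref{analogue} would fail. Everything else is either a direct substitution or a citation of previously established results, so this step is the only real obstacle and is handled cleanly by the monicity assumption.
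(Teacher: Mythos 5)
Your proposal is correct and follows essentially the same route as the paper: the rational root theorem for monic integer polynomials is precisely the content of the paper's Lemma on $\mbZ$ being integrally closed in $\mbQ$ (\Cref{Atiyah}), after which both arguments invoke \Cref{analogue} to force the invariant to equal $7$ and cite \cite[Theorem 2.6]{Lam16} for the mutation-orbit description. No gaps.
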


The paper is organized as follows. In \Cref{S2}, we review the basic notions and properties about cluster algebras and mutation invariants. We also define sign-equivalent exchange matrices, see \Cref{def sign}. In \Cref{S3}, we calculate and classify all the irreducible sign-equivalent exchange matrices by cluster mutations, see \Cref{ta1} and \Cref{main}. In \Cref{S4}, we prove the finiteness of the solutions to Diophantine equations corresponding to rank 2 mutation invariants of finite type, see \Cref{finite solution}. Then, we give a Diophantine explanation for the differences between finite type and affine type cluster algebras of rank 2, see \Cref{diophantine explanation}. In \Cref{main section}, we find all the positive integer points of the Markov mutation invariant \eqref{Lampe mutation invariant 0} and its variant \eqref{Lampe mutation invariant 1} by use of cluster mutations, see \Cref{Aigner's theorem} and \Cref{analogue}. In \Cref{S6}, as an application, we find two classes of Diophantine equations with cluster algebraic structures, see \Cref{Prop case} and \Cref{Lampe case}. 
\section*{Conventions}
\begin{itemize}[leftmargin=1em]\itemsep=0pt
\item In this paper, the integer ring, the set of non-negative integers, the set of positive integers, the rational number field and the real number field are denoted by $\mbZ$, $\mbN$, $\mbN_{+}$, $\mbQ$ and $\mbR$ respectively. 
\item We denote by $\text{Mat}_{n\times n}(\mbZ)$ the set of all $n\times n$ integer square matrices. An integer square matrix $B$ is said to be \emph{skew-symmetrizable} if there exists a positive integer diagonal matrix $D$ such that $DB$ is skew-symmetric and $D$ is called the \emph{left skew-symmetrizer} of $B$. 
\item For any $a\in \mbZ$, we denote $[a]_{+}=\max(a,0)$ and then $a=[a]_+-[-a]_+$. The zero matrix is denoted by $O$.
\item Let $[\ \cdot \ ]$ be the floor function over $\mbR$. That is, for any $a\in \mbR$, $[a]=\max\{x\in \mbZ|\ x\leq a\}$.
\end{itemize}
\section{Preliminaries}\label{S2}
In this section, we recall some basic notions and properties about cluster algebras introduced by Fomin-Zelevinsky \cite{FZ02,FZ03,FZ07,Nak23}. We also recall the notions of mutation invariants of cluster algebras defined in \cite{CL24}.
\subsection{Seeds and cluster algebras}\

Let $n\in \mbN_{+}$ and $\mcF$ be a rational function field of $n$ variables over $\mbQ$. A \emph{seed} is a pair $(\mfx,B)$, such that $\mfx=(x_1,\dots,x_n)$ is an $n$-tuple of algebraically independent and generating elements of $\mcF$ and $B=(b_{ij})_{n\times n}$ is a skew-symmetrizable matrix. The $n$-tuple $\mfx$ is the \emph{cluster}, elements $x_i\in \mfx$ are \emph{cluster variables} and $B$ is the \emph{exchange matrix}.

Given a seed $(\mfx,B)$ and $1\leq k\leq n$, we define $\mu_k(\mfx,B)=(\mfx^{\prime},B^{\prime})$ such that $\mfx^{\prime}=(x_{1}^{\prime},\dots,x_{n}^{\prime})$, where \begin{align}\ 
		x_{i}^{\prime}=\left\{
		\begin{array}{ll}
			x_{k}^{-1}(\prod\limits_{j=1}^{n}x_{j}^{[b_{jk}]_{+}}+\prod\limits_{j=1}^{n}x_{j}^{[-b_{jk}]_{+}}), &   \text{if}\ i=k, \\
			x_{i}, &  \text{if}\ i \neq k, 
		\end{array} \right.
	\end{align}
and $B^{\prime}=(b_{ij}^{\prime})_{n\times n}$ is given by 
	\begin{align} \label{matrix mutation}
		b_{ij}^{\prime}=\left\{
		\begin{array}{ll}
			-b_{ij}, &  \text{if}\ i=k \;\;\mbox{or}\;\; j=k, \\
			b_{ij}+[b_{ik}]_{+}b_{kj}+b_{ik}[-b_{kj}]_{+}, &  \text{if}\ i\neq k \;\;
			\mbox{and}\; j\neq k. 
		\end{array} \right. 
	\end{align}
In fact, $(\mfx^{\prime},B^{\prime})$ is still a seed and $\mu_k$ is involutive, that is $\mu_k(\mfx^{\prime},B^{\prime})=(\mfx,B)$, see \cite{Nak23}. Then, $(\mfx^{\prime},B^{\prime})$ is called the \emph{k-direction mutation} of $(\mfx,B)$. Two seeds (exchange matrices) are said to be \emph{mutation-equivalent} if one can be obtained from the other by a sequence of mutations. The mutation equivalence class of $B$ is denoted by $[B]$. A \emph{cluster pattern} $\mathbf{\Sigma}=\{(\mfx_t,B_t)|\ t\in \mbT_n\}$ is a collection of seeds which are labeled by the $n$-regular tree $\mbT_{n}$, such that $(\mfx_{t^{\prime}},B_{t^{\prime}})=\mu_k(\mfx_t,B_t)$ for any $t \stackrel{k}{\longleftrightarrow} t^{\prime}$ in $\mbT_n$. 
%For any vertices $t,t^{\prime}\in \mbT_n$, we denote the distance, that is the length of arrows between them by $d(t,t^{\prime})$. 

In the following, we use the notations that $\mathbf{x}_{t}=(x_{1;t},\dots,x_{n;t})$ and $B_{t}=(b_{ij;t})_{n\times n}$. Note that for an arbitrary fixed vertex $t_0\in \mbT_n$, we call the seed $(\mfx_{t_0},B_{t_0})$ \emph{initial seed} and denote by $\mathbf{x}_{t_0}=\mfx=(x_{1},\dots,x_{n})$ the \emph{initial cluster} and $B_{t_0}=B=(b_{ij})_{n\times n}$ the \emph{initial exchange matrix}.
\begin{definition}[\emph{Cluster algebra}]\label{cluster algebras}
	For a cluster pattern $\mathbf{\Sigma}$, the  \emph{cluster algebra} $\mcA=\mcA(\mathbf{\Sigma})$ is the $\mbQ$-subalgebra of $\mcF$ generated by all the cluster variables $\{x_{i;t}|\ i=1,\dots,n;t\in\mbT_{n}\}$. Here, $n$ is  the \emph{rank} of $\mcA$.\end{definition}
\begin{definition}[\emph{Finite type}]
A cluster algebra $\mcA$ is called of \emph{finite type} if it contains only finite distinct seeds. Otherwise, it is called of \emph{infinite type}.
\end{definition} 
In particular, the cluster algebras of rank 2 as follows are simple but important.
\begin{example}
	Let $\mbT_2$ be a $2$-regular tree as follows \begin{align}
	\dots\stackrel{2}{\longleftrightarrow}t_{-2}\stackrel{1}{\longleftrightarrow}t_{-1}\stackrel{2}{\longleftrightarrow}t_0 \stackrel{1}{\longleftrightarrow}t_1\stackrel{2}{\longleftrightarrow}t_2\stackrel{1}{\longleftrightarrow} \dots  \notag,
\end{align}
	and the initial exchange matrix be  \begin{align}
		B_0=\begin{pmatrix}
	0 & m \\ -n & 0
\end{pmatrix},\ \text{for}\ m,n\in \mbN. 
	\label{2rank}\end{align}  According to \cite[Theorem 1.8]{FZ03}, we get $\mcA$ is of finite type if and only if $mn\leq 3$. 
	\begin{enumerate}
		\item When $m=n=0$, $\mcA=\mcA(0,0)$ is called of \emph{type $A_1\times A_1$}.
		\item When $m=n=1$, $\mcA=\mcA(1,-1)$ is called of \emph{type $A_2$}.
		\item When $m=1,n=2$, $\mcA=\mcA(1,-2)$ is called of \emph{type $B_2$}.
		\item When $m=1,n=3$, $\mcA=\mcA(1,-3)$ is called of \emph{type $G_2$}.
	\end{enumerate}  
The finite type clusters of rank 2 can be referred to \Cref{clusters of finite type}. In addition, the cluster algebras with $mn=4$ are called of \emph{affine type}, denoted by $\mcA(2,-2)$ and $\mcA(1,-4)$ respectively. When $mn\geq 5$, the cluster algebras are called of \emph{non-affine type}.

\end{example}
\begin{definition}[\emph{Finite mutation type}]\label{def finite mutation type}
A cluster algebra $\mcA$ is called
of \emph{finite mutation type} if it contains only finitely many exchange matrices.
\end{definition} Note that the cluster algebra with finitely
many exchange matrices is not necessarily of finite type. In addition, all the coefficient-free cluster algebras of finite mutation type were classified in \cite{FST1, FST2} via unfoldings.
\begin{definition}[\emph{Sign-equivalent}]\label{def sign}
	An exchange matrix $B$ is said to be  \emph{sign-equivalent} if its mutation equivalence class is $[B]=\{B,-B\}$.
\end{definition} It is direct that the cluster algebras with sign-equivalent exchange matrices are of finite mutation type. The matrices \eqref{2rank} without $m=n=0$ are the simplest sign-equivalent exchange matrices. Here, the zero matrix is not sign-equivalent since its mutation equivalence class only contains itself and it is trivial. In addition, the exchange matrices as follows are also sign-equivalent:
\begin{align}
	B=\begin{pmatrix}0 & 2 & -2\\ -2 & 0 & 2\\ 2 & -2 & 0\end{pmatrix},\ \begin{pmatrix}0 & 1 & -1\\ -4 & 0 & 2\\ 4 & -2 & 0\end{pmatrix}.\notag
\end{align} Note that the mutation rules for the sign-equivalent exchange matrices are invariant.
\begin{table}
\centering
\caption{A list of rank 2 clusters  of finite type}
\scalebox{0.7}{
\begin{tabular}{cc|cc|cc|cc}
	\toprule[1.5pt]
	$A_1\times A_1$ & Clusters & $A_2$ & Clusters & $B_2$ & Clusters & $G_2$ & Clusters\\
	\midrule
	1 & $(x_1,x_2)$ & 1 & $(x_1,x_2)$ & 1 & $(x_1,x_2)$ & 1 & $(x_1,x_2)$ \\ 
	2 & $(\frac{2}{x_1},x_2)$ & 2 & $(\frac{x_2+1}{x_1},x_2)$ & 2 & $(\frac{x_2^2+1}{x_1},x_2)$ & 2 & $(\frac{x_2^3+1}{x_1},x_2)$\\
	3 & $(\frac{2}{x_1},\frac{2}{x_2})$ & 3 & $(\frac{x_2+1}{x_1},\frac{x_1+x_2+1}{x_1x_2})$ & 3 & $(\frac{x_2^2+1}{x_1},\frac{x^2_2+x_1+1}{x_1x_2})$ & 3 & $(\frac{x_2^3+1}{x_1},\frac{x_2^3+x_1+1}{x_1x_2})$ \\
	4 & $(x_1,\frac{2}{x_2})$ & 4 & $(\frac{x_1+1}{x_2},\frac{x_1+x_2+1}{x_1x_2})$ & 4 & $(\frac{x^2_2+x_1^2+2x_1+1}{x_1x_2^2},\frac{x^2_2+x_1+1}{x_1x_2})$ & 4 & $(\frac{x_2^6+3x_1x_2^3+2x_2^3+x_1^3+3x_1^2+3x_1+1}{x_1^2x_2^3},\frac{x_2^3+x_1+1}{x_1x_2})$ \\ 
	& & 5 & $(\frac{x_1+1}{x_2},x_1)$ & 5 & $(\frac{x^2_2+x_1^2+2x_1+1}{x_1x_2^2},\frac{x_1+1}{x_2})$ & 5 &   $(\frac{x_2^6+3x_1x_2^3+2x_2^3+x_1^3+3x_1^2+3x_1+1}{x_1^2x_2^3},\frac{x_2^3+x_1^2+2x_1+1}{x_1x_2^2})$\\ 
	& & 6& $(x_2,x_1)$ & 6 & $(x_1,\frac{x_1+1}{x_2})$ & 6 & $(\frac{x_2^3+x_1^3+3x_1^2+3x_1+1}{x_1x_2^3},\frac{x_2^3+x_1^2+2x_1+1}{x_1x_2^2})$\\
	& & 7& $(x_2,\frac{x_2+1}{x_1})$& & & 7 & $(\frac{x_2^3+x_1^3+3x_1^2+3x_1+1}{x_1x_2^3},\frac{x_1+1}{x_2})$\\
	& & 8& $(\frac{x_1+x_2+1}{x_1x_2},\frac{x_2+1}{x_1})$& & &8& $(x_1,\frac{x_1+1}{x_2})$\\
	&& 9 &$(\frac{x_1+x_2+1}{x_1x_2},\frac{x_1+1}{x_2})$ & &\\
	&& 10 & $(x_1,\frac{x_1+1}{x_2})$ & &\\
	\bottomrule[1.5pt]
\end{tabular}
} \label{clusters of finite type}
\end{table}
\subsection{Permutation matrices and irreducible matrices}\

We recall some notions and properties about permutation matrices and irreducible matrices according to \cite[Chapter XIII]{Gan98}. 
\begin{definition}[\emph{Permutation matrix}]
	A permutation matrix $P$ is a square matrix that has exactly one entry of 1 in each row and each column with all other entries 0.
\end{definition}
\begin{definition}[\emph{Reducible matrix}]
Let $B\in \text{Mat}_{n\times n}(\mbR)$.  It is said to be \emph{reducible} if there is a permutation matrix $P$, such that \begin{align}P B P^{T}=\left(\begin{array}{ll}{B_1} & {B_2} \\ {O} & {B_3}\end{array}\right),\notag \end{align} where $B_1\in \text{Mat}_{k\times k}(\mbR)$ and $B_3 \in \text{Mat}_{(n-k)\times (n-k)}(\mbR)$ with $1\leq k\leq n-1$. Otherwise, it is said to be \emph{irreducible}.
\end{definition}
In particular, assume that $B$ is an exchange matrix. Since $B$ is sign-skew-symmetric, it is reducible if and only if there is a permutation matrix $P$, such that \begin{align}P B P^{T}=\left(\begin{array}{ll}{B_1} & {O} \\ {O} & {B_3}\end{array}\right),\notag \end{align} where $B_1\in \text{Mat}_{k\times k}(\mbZ)$ and $B_3 \in \text{Mat}_{(n-k)\times (n-k)}(\mbZ)$. Under this case, it is also called the \emph{direct sum} of the square matrices $B_1$ and $B_3$. Furthermore, for a permutation $\sigma\in \mathfrak{S}_n$, we define the (right) action of $\sigma$ on $B$ by $\sigma(B)=B^{\prime}=(b_{ij}^{\prime})$, where 
\begin{align}
	b^{\prime}_{ij}=b_{\sigma(i)\sigma(j)}.\notag
\end{align} It is easy to check that $\sigma(B)$ is still an exchange matrix. Moreover, there is a permutation matrix $P$ corresponding to $\sigma$, such that 
\begin{align}
	\sigma(B)=PBP^{T}.\notag
\end{align} Indeed, there is an equivalence relation for exchange matrices: $B^{\prime}\sim B$ if there is some permutation $\sigma\in \mathfrak{S}_n$, such that $B^{\prime}=\sigma(B)$. We call $B$ and $B^{\prime}$ \emph{permutation-equivalent}.

\subsection{Mutation invariants of cluster algebras}\

Given a cluster algebra $\mcA$ of rank $n$, we recall the definition of mutation invariants based on \cite[Definition 1.1]{CL24}.
By Laurent phenomenon \cite[Theorem 3.1]{FZ02} and the positivity theorem \cite[Theorem 4.10]{GHKK18}, all the clusters labeled by $t$ can be written as  
$$\mfx_t=(c_{1;t}(x_1,\cdots,x_n),\cdots,c_{n;t}(x_1,\cdots,x_n)),$$ where each $c_{i,t}(x_1,\cdots,x_n)$ is a Laurent polynomial with non-negative integer coefficients.
\begin{definition}[\emph{Mutation invariant}]\label{mutation invariant}
	 A non-constant and reduced rational function $\mcT(x_1,\dots,x_n)\in \mbQ(x_1,\dots,x_n)$ is called a \emph{mutation invariant of $\mcA$} if for any $t \in \mbT_{n}$,
	\begin{equation}
		\mcT(x_{1},\cdots,x_{n})=\mcT(x_{1;t},\cdots,x_{n;t}).\notag
	\end{equation}
	In particular, if $\mcT(x_1,\dots,x_n) \in \mbQ[x_{1}^{\pm 1},\dots,x_{n}^{\pm 1}]$, it is called a \emph{Laurent mutation invariant of $\mcA$}. 
\end{definition}

In fact, if $\mcT(x_1,\dots,x_n)$ is a mutation invariant, then so does $F(\mcT(x_1,\dots,x_n))$, where $F(X)\in \mbQ[X]$ is a non-constant polynomial. For example, according to \cite[Lemma 2.23 \& Lemma 2.26]{CL24}, we have the Laurent mutation invariants of $\mcA(2,-2)$ and $\mcA(1,-4)$ respectively:
\begin{align}
	F\left(\dfrac{x_1^2+x_2^2+1}{x_1x_2}\right),\  F\left(\dfrac{x_2^4+x_1^2+2x_1+1}{x_1x_2^2}\right).\notag
\end{align}
\begin{remark}\label{IMR rules}
	If $B$ is sign-equivalent, then by \cite[Lemma1.5]{CL24}, $\mcT(x_1,\dots,x_n)$ is a mutation invariant if and only if $\mcT(x_1,\dots,x_n)=\mcT(\mu_k(x_1,\dots,x_n))$ for any $k\in \{1,\dots,n\}$. Hence, it is important to classify all the irreducible sign-equivalent exchange matrices.
\end{remark}

\section{The classification of sign-equivalent exchange matrices}\label{S3}
All the coefficient-free cluster algebras of finite mutation type were classified in \cite{FST1, FST2}. In particular, the sign-equivalent exchange matrices are mutation-finite. In this section, we calculate and  classify all the irreducible sign-equivalent exchange matrices by cluster mutations since they are important to finding more mutation invariants of cluster algebras. 

First of all, we exhibit the compatibility between permutations and cluster mutations as follows.
\begin{proposition}\label{permutation}
	Let $\sigma\in \mathfrak{S}_n$ and $k\in \{1,\dots,n\}$. Then, the following equality holds:
	\begin{align}
		\mu_{\sigma^{-1}(k)}(\sigma(B))=\sigma(\mu_k(B)).\notag
	\end{align}
\end{proposition}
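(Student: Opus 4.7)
The plan is an entry-by-entry verification using only the definition of the permutation action $\sigma(B)_{ij} = b_{\sigma(i)\sigma(j)}$ and the matrix mutation rule \eqref{matrix mutation}. Writing $l := \sigma^{-1}(k)$, so that $\sigma(l) = k$, I would compute the $(i,j)$-entry of each side and check they agree.

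For the left-hand side, I apply \eqref{matrix mutation} to the matrix $\sigma(B)$ at direction $l$: the case split is governed by whether $i = l$ or $j = l$. For the right-hand side, I first compute $\mu_k(B)$ by \eqref{matrix mutation}, then relabel via $\sigma$ to obtain $\sigma(\mu_k(B))_{ij} = (\mu_k(B))_{\sigma(i)\sigma(j)}$; here the case split depends on whether $\sigma(i) = k$ or $\sigma(j) = k$. The key observation that makes everything line up is the equivalence $\sigma(i) = k \iff i = \sigma^{-1}(k) = l$ (and likewise for $j$), so the two case divisions match.

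In the boundary case ($i = l$ or $j = l$, equivalently $\sigma(i) = k$ or $\sigma(j) = k$), both sides simplify to $-b_{\sigma(i)\sigma(j)}$. In the generic case ($i,j \neq l$), both sides reduce to
\[
b_{\sigma(i)\sigma(j)} + [b_{\sigma(i)k}]_+\, b_{k\,\sigma(j)} + b_{\sigma(i)k}\, [-b_{k\,\sigma(j)}]_+,
\]
after substituting $\sigma(l) = k$ on the left. This finishes the verification.

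The argument is essentially bookkeeping; there is no genuine obstacle beyond carefully aligning the two case splits through the bijection $i \leftrightarrow \sigma(i)$. This compatibility is what justifies the earlier remark that permutation-equivalent exchange matrices play interchangeable roles in the classification in Section~\ref{S3}, and it will be used tacitly when we identify sign-equivalent matrices up to simultaneous relabelling of rows and columns.
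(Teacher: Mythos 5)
Your proof is correct and follows essentially the same route as the paper: a direct entry-by-entry computation of both sides via the mutation rule \eqref{matrix mutation}, aligned through the equivalence $\sigma(i)=k \iff i=\sigma^{-1}(k)$. In fact your indices in the generic case ($b_{\sigma(i)k}$ and $b_{k\,\sigma(j)}$, obtained from $\sigma(l)=k$) are written more carefully than the paper's, which records the same quantities as $b_{\sigma(i)\sigma(k)}$ and $b_{\sigma(k)\sigma(j)}$.
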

\begin{proof}
	Let $B=(b_{ij})_{n\times n}$, $B^{\prime}=\mu_k(B)=(b^{\prime}_{ij})_{n\times n}$ and $\sigma(\mu_k(B))=(c_{ij})_{n\times n}$. Then, on the right hand side, by the mutation rules \eqref{matrix mutation} and the definition of permutation $\sigma$, we have $c_{ij}=b^{\prime}_{\sigma(i)\sigma(j)}$, where 
	\begin{align}
		b^{\prime}_{\sigma(i)\sigma(j)}=\left\{
		\begin{array}{ll}
			-b_{\sigma(i)\sigma(j)},\  \text{if}\   i=\sigma^{-1}(k) \;\;\mbox{or}\;\; j=\sigma^{-1}(k), \\
			b_{\sigma(i)\sigma(j)}+[b_{\sigma(i)\sigma(k)}]_{+}b_{\sigma(k)\sigma(j)}+b_{\sigma(i)\sigma(k)}[-b_{\sigma(k)\sigma(j)}]_{+},\ \text{if} \ i\neq \sigma^{-1}(k) \; \\
			\mbox{and}\; j\neq \sigma^{-1}(k). 
		\end{array} \right. \notag
	\end{align} On the left hand side, we assume that $\sigma(B)=(d_{ij})_{n\times n}$ and $\mu_{\sigma^{-1}(k)}(\sigma(B))=(d^{\prime}_{ij})_{n\times n}$. Then, it implies that $d_{ij}=b_{\sigma(i)\sigma(j)}$. Moreover, we have 
	\begin{align}
		d_{ij}^{\prime}=\left\{
		\begin{array}{ll}
			-d_{ij}, &  \text{if}\ i=\sigma^{-1}(k) \;\;\mbox{or}\;\; j=\sigma^{-1}(k), \\
			d_{ij}+[d_{ik}]_{+}d_{kj}+d_{ik}[-d_{kj}]_{+}, &  \text{if}\ i\neq \sigma^{-1}(k) \;
			\mbox{and}\; j\neq \sigma^{-1}(k). 
		\end{array} \right. \notag
	\end{align}  Hence, we get 
	\begin{align}
		d^{\prime}_{ij}=\left\{
		\begin{array}{ll}
			-b_{\sigma(i)\sigma(j)},\  \text{if}\   i=\sigma^{-1}(k) \;\;\mbox{or}\;\; j=\sigma^{-1}(k), \\
			b_{\sigma(i)\sigma(j)}+[b_{\sigma(i)\sigma(k)}]_{+}b_{\sigma(k)\sigma(j)}+b_{\sigma(i)\sigma(k)}[-b_{\sigma(k)\sigma(j)}]_{+},\ \text{if} \ i\neq \sigma^{-1}(k) \;\\
			\mbox{and}\; j\neq \sigma^{-1}(k), 
		\end{array} \right. \notag
	\end{align} which implies that $d^{\prime}_{ij}=c_{ij}$ for any $i,j\in \{1,\dots,n\}$ and $\mu_{\sigma(k)}(\sigma(B))=\sigma(\mu_k(B))$.
\end{proof}
Then, according to \Cref{permutation}, we obtain the proposition as follows directly.
\begin{corollary}\label{sigma}
	Let $\sigma\in \mathfrak{S}_n$. If $[B]=\{B,-B\}$, then $[\sigma(B)]=\{\sigma(B),-\sigma(B)\}$.
\end{corollary}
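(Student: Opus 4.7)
The plan is to derive the corollary as a direct consequence of \Cref{permutation}, by showing that permuting commutes with taking the entire mutation equivalence class: $[\sigma(B)] = \sigma([B])$. The only algebraic fact beyond \Cref{permutation} that I need is $\sigma(-B) = -\sigma(B)$, which is immediate from the entry-wise definition $(\sigma(B))_{ij} = b_{\sigma(i)\sigma(j)}$.

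First, I would iterate \Cref{permutation} to obtain the intertwining identity
\begin{equation}
\mu_{\sigma^{-1}(k_r)} \circ \cdots \circ \mu_{\sigma^{-1}(k_1)} \circ \sigma \;=\; \sigma \circ \mu_{k_r} \circ \cdots \circ \mu_{k_1} \notag
\end{equation}
for any finite sequence $k_1,\dots,k_r \in \{1,\dots,n\}$, by a straightforward induction on $r$ where the inductive step is exactly \Cref{permutation}.

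Next, I establish the inclusion $\{\sigma(B),-\sigma(B)\} \subseteq [\sigma(B)]$. The matrix $\sigma(B)$ trivially lies in its own mutation equivalence class. For $-\sigma(B) = \sigma(-B)$: since by hypothesis $-B \in [B]$, there exists a sequence $j_1,\dots,j_r$ with $\mu_{j_r}\cdots\mu_{j_1}(B) = -B$. Setting $k_i := \sigma^{-1}(j_i)$ and applying the intertwining identity gives
\begin{equation}
\mu_{k_r}\cdots\mu_{k_1}(\sigma(B)) \;=\; \sigma(\mu_{j_r}\cdots\mu_{j_1}(B)) \;=\; \sigma(-B) \;=\; -\sigma(B), \notag
\end{equation}
so $-\sigma(B) \in [\sigma(B)]$.

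For the reverse inclusion $[\sigma(B)] \subseteq \{\sigma(B),-\sigma(B)\}$, take any $C' \in [\sigma(B)]$ and write $C' = \mu_{k_r}\cdots\mu_{k_1}(\sigma(B))$. Setting $j_i := \sigma(k_i)$ so that $k_i = \sigma^{-1}(j_i)$, the intertwining identity yields $C' = \sigma(\mu_{j_r}\cdots\mu_{j_1}(B))$. The inner matrix lies in $[B] = \{B,-B\}$, so $C' \in \{\sigma(B), \sigma(-B)\} = \{\sigma(B),-\sigma(B)\}$. Combining the two inclusions gives $[\sigma(B)] = \{\sigma(B),-\sigma(B)\}$.

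There is essentially no serious obstacle here; the only mild bookkeeping point is keeping track of whether one should permute indices by $\sigma$ or $\sigma^{-1}$ when iterating \Cref{permutation}, which is settled cleanly by choosing the substitution $k_i = \sigma^{-1}(j_i)$ (or conversely) in each direction.
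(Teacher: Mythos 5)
Your proof is correct and takes essentially the same route as the paper: the paper derives \Cref{sigma} ``directly'' from \Cref{permutation} without writing out the details, and your argument (iterating the intertwining identity, noting $\sigma(-B)=-\sigma(B)$, and checking both inclusions) is precisely the omitted verification.
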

\begin{remark}
	In \cite[Definition 2.8 \& Proposition 2.9]{Nak23}, the (left) action of $\sigma\in \mathfrak{S}_n$ on $B$ is defined by $\sigma(B)=B^{\prime}=(b^{\prime}_{ij})_{n\times n}$, where $b^{\prime}_{ij}=b_{\sigma^{-1}(i)\sigma^{-1}(j)}$. Then, the equality holds: $\mu_{\sigma(k)}(\sigma(B))=\sigma(\mu_k(B))$.
\end{remark}
%\begin{proof}
%	It is direct by \Cref{permutation}.
%\end{proof}
\begin{lemma}\label{lem3}
	Let $B$ be an irreducible sign-equivalent exchange matrix. Then, $\mu_i(B)=-B$ for any $i\in \{1,\dots,n\}$.
\end{lemma}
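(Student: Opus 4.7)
The plan is very short: by the definition of sign-equivalence, $\mu_i(B)$ must lie in $[B]=\{B,-B\}$, so we only need to rule out the possibility $\mu_i(B)=B$. I will do this by contradiction, using irreducibility in an essentially trivial way.

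Assume $\mu_i(B)=B$ for some index $i$. The mutation rule \eqref{matrix mutation} forces $(\mu_i(B))_{ij}=-b_{ij}$ for every $j$ and $(\mu_i(B))_{ji}=-b_{ji}$ for every $j$. Combining with $\mu_i(B)=B$ gives $b_{ij}=b_{ji}=0$ for all $j\neq i$, and since exchange matrices are sign-skew-symmetric we also have $b_{ii}=0$. Hence the whole $i$-th row and the whole $i$-th column of $B$ vanish.

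Now I invoke irreducibility. Let $\sigma\in\mathfrak{S}_n$ be the transposition sending $i\mapsto n$, and let $P$ be the corresponding permutation matrix. Then $PBP^{T}$ has the block form
\begin{align}
PBP^{T}=\begin{pmatrix} B' & 0 \\ 0 & 0 \end{pmatrix},\notag
\end{align}
where $B'$ is the $(n-1)\times(n-1)$ submatrix of $B$ obtained by deleting row and column $i$. This exhibits $B$ as reducible, provided $n\geq 2$. For $n=1$ the only exchange matrix is the zero matrix, whose mutation class is $\{0\}$ and which is therefore excluded from the definition of sign-equivalence (as noted after \Cref{def sign}). So $n\geq 2$ and we reach a contradiction with the irreducibility hypothesis.

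The only mild point to spell out carefully is the edge case $n=1$; everything else is a one-line calculation from the mutation formula together with the definitions of sign-equivalence and reducibility. There is no real obstacle in the argument itself — the substance of the section lies in the subsequent use of this lemma to constrain the possible entries of $B$ and thereby classify all irreducible sign-equivalent exchange matrices (Table \ref{ta1}).
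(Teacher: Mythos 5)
Your proposal is correct and follows essentially the same route as the paper: sign-equivalence forces $\mu_i(B)\in\{B,-B\}$, and $\mu_i(B)=B$ would zero out the $i$-th row and column, contradicting irreducibility. The only additions — the explicit permutation exhibiting the block decomposition and the $n=1$ edge case — are harmless elaborations of what the paper leaves implicit.
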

\begin{proof}
	Note that $B$ is sign-equivalent. If $\mu_i(B)=B$ for some $i\in \{1,\dots,n\}$, by the mutation rules \eqref{matrix mutation}, we have $b_{il}=-b_{il}=0$ for any $l\in \{1,\dots,n\}$. Hence, we conclude that all the entries of $i$-th row and $i$-th column of $B$ are zero, which implies that it is reducible. Then, it contradicts with $B$ is irreducible.
\end{proof}
Now, we are ready to classify all the irreducible sign-equivalent exchange matrices.
\begin{theorem}\label{main}
	There are four types of irreducible sign-equivalent exchange matrices are as follows:
	\begin{enumerate}
	%\item $B=(0)_{1\times 1}$.
		\item $B=\sigma(B_0)$, where $\sigma\in \mathfrak{S}_2$ and $B_0=\begin{pmatrix}
	0 & b \\ -c & 0 \end{pmatrix}$ for $b,c\in \mbN_+$.
		\item $B=\sigma(B_0)$, where $\sigma\in \mathfrak{S}_3$ and $B_0=\begin{pmatrix}0 & 2 & -2\\ -2 & 0 & 2\\ 2 & -2 & 0\end{pmatrix}$.
		\item $B=\sigma(B_0)$, where $\sigma\in \mathfrak{S}_3$ and $B_0=\begin{pmatrix}0 & 1 & -1\\ -4 & 0 & 2\\ 4 & -2 & 0\end{pmatrix}$.
		\item $B=\sigma(B_0)$, where $\sigma\in \mathfrak{S}_3$ and $B_0=\begin{pmatrix}0 & -4 & 4\\ 1 & 0 & -2\\ -1 & 2 & 0\end{pmatrix}$.
	\end{enumerate}
\end{theorem}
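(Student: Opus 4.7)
The plan is to invoke \Cref{lem3} to reduce the classification to finding all irreducible skew-symmetrizable $B$ with $\mu_{k}(B)=-B$ for every direction $k$. Expanding this requirement via the matrix mutation rule \eqref{matrix mutation}, for every triple of pairwise distinct indices $i,j,k$ one obtains the identity
\begin{equation}\label{plan:key}
-2b_{ij}=[b_{ik}]_{+}b_{kj}+b_{ik}[-b_{kj}]_{+}.
\end{equation}
A four-case check on the signs of $b_{ik}$ and $b_{kj}$ yields the dichotomy: $b_{ij}=0$ whenever $b_{ik}$ and $b_{kj}$ have strictly opposite signs or at least one of them vanishes, while if they share a common sign then $b_{ij}\neq 0$ with $|b_{ij}|=|b_{ik}b_{kj}|/2$. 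In particular, if $b_{ik}=0$ for some $i\neq k$ then \eqref{plan:key} forces $b_{ij}=0$ for every $j\neq i$, making vertex $i$ isolated and $B$ reducible. Hence for $n\geq 3$ every off-diagonal entry of $B$ is non-zero, so the underlying graph is complete, and for each $k$ the partition $V\setminus\{k\}=I^{+}(k)\sqcup I^{-}(k)$ with $I^{\pm}(k)=\{i\neq k\colon \pm b_{ik}>0\}$ enjoys the bipartite property that every edge inside $V\setminus\{k\}$ joins $I^{+}(k)$ to $I^{-}(k)$.

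The rank $n=1$ case is vacuous since the only skew-symmetrizable $1\times 1$ matrix is zero, and for $n=2$ a direct computation from \eqref{matrix mutation} shows that any $\begin{pmatrix}0&b\\-c&0\end{pmatrix}$ with $b,c\in\mbN_{+}$ satisfies $\mu_{1}(B)=\mu_{2}(B)=-B$, which combined with \Cref{sigma} produces case (1). For $n\geq 4$ the structural paragraph above produces an immediate contradiction: completeness forces $b_{uv}\neq 0$ for every $u\neq v$, whereas $|I^{+}(k)|+|I^{-}(k)|=n-1\geq 3$ forces one of $I^{\pm}(k)$ to contain two distinct vertices, which must then satisfy $b_{uv}=0$ by bipartiteness. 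Hence no irreducible sign-equivalent matrix exists in rank $\geq 4$.

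The substantive case is $n=3$, where $|I^{+}(k)|+|I^{-}(k)|=2$ together with completeness rules out $|I^{\pm}(k)|=2$ and forces $|I^{+}(k)|=|I^{-}(k)|=1$ for every $k$, so the sign pattern of $(b_{12},b_{13},b_{23})$ must be either $(+,-,+)$ or its negation $(-,+,-)$. Normalising to the first pattern and writing $b_{12}=A$, $b_{21}=-A'$, $b_{13}=-B$, $b_{31}=B'$, $b_{23}=C$, $b_{32}=-C'$ with all six parameters in $\mbN_{+}$, the six instances of \eqref{plan:key} reduce after elimination to $AA'=BB'=CC'=4$ together with the single coupling $B=AC/2$, which also encodes the skew-symmetrizability condition. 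Since $AA'=4$ with $A,A'\in\mbN_{+}$ forces $A\in\{1,2,4\}$, and similarly for $B$ and $C$, a finite search over $(A,B,C)\in\{1,2,4\}^{3}$ with $B=AC/2$ isolates exactly seven triples
\[
(1,1,2),\ (2,1,1),\ (1,2,4),\ (2,2,2),\ (4,2,1),\ (2,4,4),\ (4,4,2).
\]
I expect the main labour to be the final sorting step: explicitly computing the $\mathfrak{S}_{3}$-orbits of these seven matrices via \Cref{permutation} and \Cref{sigma} to verify that they partition into exactly three orbits, namely the Markov orbit $\{(2,2,2)\}$ yielding case (2), the orbit through $(1,1,2)$ also containing $(2,4,4)$ and $(4,2,1)$ yielding case (3), and the orbit through $(2,1,1)$ containing $(1,2,4)$ and $(4,4,2)$ yielding case (4), and to confirm that cases (3) and (4) are genuinely distinct orbits rather than coincident.
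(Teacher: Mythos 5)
Your proposal is correct and follows essentially the same route as the paper: reduce via \Cref{lem3} to $\mu_k(B)=-B$ in every direction, eliminate ranks $n\geq 4$ by the same zero-entry/sign-pigeonhole dichotomy, and solve the same six-equation system in rank $3$. Your elimination to $AA'=BB'=CC'=4$ together with $2B=AC$ is a slightly cleaner packaging of the paper's computation ($abcdef=-64$ followed by bounding $a\leq 4$ and case-splitting on $a\in\{4,2,1\}$), and your seven triples and their three $\mathfrak{S}_3$-orbits match the matrices in \eqref{case1}, \eqref{case2} and \eqref{case3} exactly.
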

\begin{proof}
	Firstly, we prove that there is no irreducible sign-equivalent exchange matrix for $n\geq 4$. Assume that $B=(b_{ij})_{n\times n}$. By \Cref{lem3}, we have $\mu_i(B)=-B$ for $i\in \{1,\dots,n\}$. 
	\begin{itemize}[leftmargin=1em]\itemsep=0pt
	\item If there are $i\neq j$ such that $b_{ij}= 0$, then we take the $i$-direction mutation of $B$ and let $\mu_i(B)=(b_{ij}^{\prime})_{n\times n}$. Note that for any $l\in \{1,\dots,n\}$, we have 
	\begin{align}
		b_{lj}^{\prime}=b_{lj}+\sgn(b_{li})[b_{li}b_{ij}]_+=b_{lj}.\notag
	\end{align} Since $\mu_i(B)=-B$, we get $b_{lj}={b_{lj}}^{\prime}=0$. Then, we have $b_{jl}=0$, which implies that all the entries of $j$-th row and $j$-th column of $B$ are zero. Hence, $B$ is reducible.
	\item If $b_{ij}\neq 0$ for all $i\neq j$, since $n\geq 4$, there are $i_1, i_2\in \{2,\dots,n\}$, such that $i_1\neq i_2$ and 
	\begin{align}
		\sgn(b_{1i_1})=-\sgn(b_{i_{2}1}),\  \sgn(b_{i_{1}1})=-\sgn(b_{1i_{2}}).\notag
	\end{align} We take the $1$-direction mutation of $B$ and let $\mu_1(B)=(b_{ij}^{\prime})_{n\times n}$. Then, we have 
	\begin{align}
		b_{i_1i_2}^{\prime}=b_{i_1i_2}+\sgn(b_{i_{1}1})[b_{i_{1}1}b_{1i_2}]_+=b_{i_1i_2}.\notag
	\end{align} Since $\mu_1(B)=-B$, we get $b_{i_1i_2}=b_{i_1i_2}^{\prime}=0$. Then, it contradicts with $b_{ij}\neq 0$ for all $i\neq j$.
		
	Secondly, we classify all the irreducible sign-equivalent exchange matrices for $n\leq 3$. It is direct that all the exchange matrices of order 2 without $b=c=0$ (type $A_1\times A_1$) are irreducible and sign-equivalent. Hence, we only need to focus on the case that $n=3$. Assume that $B$ is an irreducible sign-equivalent exchange matrix as follows:
\begin{align}
	B=\begin{pmatrix}0 & b_{12} & b_{13}\\ b_{21} & 0 & b_{23}\\ b_{31} & b_{32} & 0\end{pmatrix}=\begin{pmatrix}0 & a & b\\ c & 0 & d\\ e & f & 0\end{pmatrix},\notag
\end{align} where $ac\leq 0, be\leq 0$ and $df\leq 0$. By \Cref{lem3}, we have $\mu_i(B)=-B$ for $i=1,2,3$, which implies that 
\begin{align}
	\left\{
		\begin{array}{ll}
			4d+c|b|+|c|b=0,\\
			 4f+e|a|+|e|a=0,\\
			 4b+a|d|+|a|d=0,\\
			 4e+f|c|+|f|c=0,\\
			 4a+b|f|+|b|f=0,\\
			 4c+d|e|+|d|e=0.
		\end{array} \right.\label{*}
\end{align} Note that $B$ is sign-equivalent. There are only three cases to be discussed as follows:
\begin{align}
	(1)\ a\geq 0, b\geq 0, d\geq 0;\ (2)\ a\geq 0, b\geq 0, d\leq 0;\ (3)\ a\geq 0, b\leq 0, d\geq 0.\notag
\end{align}

Case (1): If $a\geq 0, b\geq 0, d\geq 0$, then the equation \eqref{*} 
%\begin{align}
%	\left\{
%		\begin{array}{ll}
%			4d=0,\\
%			 4f=0,\\
%			 4b+2ad=0,\\
%			 4e-2fc=0,\\
%			 4a=0,\\
%			 4c=0.
%		\end{array}\right. \notag
%\end{align} 
implies that $a=b=c=d=e=f=0$, that is $B=O$. Hence, it is reducible and there is a contradiction.

Case (2): If $a\geq 0, b\geq 0, d\leq 0$, then the equation \eqref{*} implies that $d=f=b=e=a=c=0$. 
%can be reduced to
%\begin{align}
%	\left\{
%		\begin{array}{ll}
%			4d=0,\\
%			 4f=0,\\
%			 4b=0,\\
%			 4e=0,\\
%			 4a=0,\\
%			 4c=0.
%		\end{array}\right. \notag
%\end{align} 
Hence, we get $B=O$ and it is reducible. Note that the case $a\leq 0,\ b\geq 0,\  d\geq 0$ is similar and it also implies that $B=O$.

Case (3): If $a\geq 0, b\leq 0, d\geq 0$, the equation \eqref{*} can be reduced to 
\begin{align}
	\left\{
		\begin{array}{ll}
			2d=bc,\\
			 2f=-ac,\\
			 2b=-ad,\\
			 2e=fc,\\
			 2a=bf,\\
			 2c=-de.
		\end{array}\right.\label{**}
\end{align} Multiplying both sides of the  \Cref{**} together, we have 
\begin{align}
	64abcdef=-(abcdef)^2.\notag
\end{align} It implies that $abcdef=0$ or $abcdef=-64$. If $abcdef=0$, without loss of generality, we might assume that $a=c=0$. Hence, by \eqref{**}, we get $B=O$, which contradicts with that $B$ is irreducible. Therefore, we have $abcdef=-64$. Now, we claim that $a\leq 4$. Otherwise, assume that $a>4$ and then $bf=2a>8$. Then, we get 
\begin{align}
	abcdef=-2a(bf)c^2<-64\notag
\end{align} and it contradicts with $abcdef=-64$. Since $a$ is a factor of $64$, we have $a\in \{4,2,1\}$.
\begin{enumerate}
	\item[(i)] If $a=4$, by \eqref{**}, we get $bf=4de=8$, which implies that $c=-1$. Hence, the equation \eqref{**} can be reduced to $2d=-b,\ 2e=-f,\ de=2.$ Then, it implies that $(d,e,b,f)=(2,1,-4,-2)$ or $(1,2,-2,-4)$.
	%\begin{align}
	%	\left\{
	%	\begin{array}{ll}
	%		d=2,\\
	%		 e=1,\\
	%		 b=-4,\\
	%		 f=-2,
	%	\end{array}\right. \ \text{or}\ \left\{
	%	\begin{array}{ll}
	%		d=1,\\
	%		 e=2,\\
	%		 b=-2,\\
	%		 f=-4.
	%	\end{array}\right.\notag
	%\end{align} 
	Moreover, we have
	\begin{align}
		B=\begin{pmatrix}0 & 4 & -4\\ -1 & 0 & 2\\ 1 & -2 & 0\end{pmatrix}\ \text{or}\ \begin{pmatrix}0 & 4 & -2\\ -1 & 0 & 1\\ 2 & -4 & 0\end{pmatrix}.\label{case1}
	\end{align}
	\item[(ii)] If $a=2$, by \eqref{**}, we get $bf=de=4$, which implies that $c=-2$. Hence, the equation \eqref{**} can be reduced to $d=-b,\ e=-f,\ de=4.$ 
%\begin{align}
%		\left\{
%		\begin{array}{ll}
%			d=-b,\\
%			 e=-f,\\
%			 de=4.
%		\end{array}\right.\notag
%	\end{align} 
Then, it implies that $(d,e,b,f)=(2,2,-2,-2)$, $(1,4,-1,-4)$ or $(4,1,-4,-1)$.
	%\begin{align}
	%	\left\{
	%	\begin{array}{ll}
	%		d=2,\\
	%		 e=2,\\
	%		 b=-2,\\
	%		 f=-2,
	%	\end{array}\right. \ \text{or}\ \left\{
	%	\begin{array}{ll}
	%		d=1,\\
	%		 e=4,\\
	%		 b=-1,\\
	%		 f=-4,
	%	\end{array}\right. \ \text{or}\ \left\{
	%	\begin{array}{ll}
	%		d=4,\\
	%		 e=1,\\
	%		 b=-4,\\
	%		 f=-1.
	%	\end{array}\right. \notag
	%\end{align} 
	Moreover, we have
	\begin{align}
		B=\begin{pmatrix}0 & 4 & -4\\ -1 & 0 & 2\\ 1 & -2 & 0\end{pmatrix},\ \begin{pmatrix}0 & 4 & -2\\ -1 & 0 & 1\\ 2 & -4 & 0\end{pmatrix} \ \text{or}\ \begin{pmatrix}0 & 4 & -2\\ -1 & 0 & 1\\ 2 & -4 & 0\end{pmatrix}. 	\label{case2}\end{align} 
	\item[(iii)] If $a=1$, by \eqref{**}, we get $4bf=de=8$, which implies that $c=-4$. Hence, the equation \eqref{**} can be reduced to $d=-2b,\ e=-2f,\ bf=2$. 
	%\begin{align}
	%	\left\{
	%	\begin{array}{ll}
	%		d=-2b,\\
	%		 e=-2f,\\
	%		 bf=2.
	%	\end{array}\right.\notag
	%\end{align} 
Then, it implies that $(d,e,b,f)=(4,2,-2,-1)$ or $(2,4,-1,-2)$.
	%\begin{align}
	%	\left\{
	%	\begin{array}{ll}
	%		d=4,\\
	%		 e=2,\\
	%		 b=-2,\\
	%		 f=-1,
	%	\end{array}\right. \ \text{or}\ \left\{
	%	\begin{array}{ll}
	%		d=2,\\
	%		 e=4,\\
	%		 b=-1,\\
	%		 f=-2.
	%	\end{array}\right.\notag
	%\end{align} 
	Moreover, we have
	\begin{align}
		B=\begin{pmatrix}0 & 1 & -2\\ -4 & 0 & 4\\ 2 & -1 & 0\end{pmatrix}\ \text{or}\ \begin{pmatrix}0 & 1 & -1\\ -4 & 0 & 2\\ 4 & -2 & 0\end{pmatrix}.\label{case3}
	\end{align}
\end{enumerate} In conclusion, it is direct to check that all the matrices in \eqref{case1}, \eqref{case2} and \eqref{case3} are skew-symmetrizable and sign-equivalent. Moreover, all of them can be expressed by one of the following: 
\begin{align}
	\sigma\begin{pmatrix}0 & 2 & -2\\ -2 & 0 & 2\\ 2 & -2 & 0\end{pmatrix},\ \sigma\begin{pmatrix}0 & 1 & -1\\ -4 & 0 & 2\\ 4 & -2 & 0\end{pmatrix},\ \sigma\begin{pmatrix}0 & -4 & 4\\ 1 & 0 & -2\\ -1 & 2 & 0\end{pmatrix},\notag
\end{align} where $\sigma\in \mathfrak{S}_3$.
\end{itemize}
\end{proof}
\begin{definition}\label{ise}
	We denote by $X_{\ise}$ the set of all irreducible sign-equivalent exchange matrices in \Cref{main}.
\end{definition}
\begin{corollary}
	All the sign-equivalent exchange matrices $B$ are permutation-equivalent to the direct sum of only one of $X_{\ise}$ and (possibly none) zero matrix $O$.  
\end{corollary}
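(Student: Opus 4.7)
The plan is to first reduce an arbitrary sign-equivalent exchange matrix to a permutation-equivalent block-diagonal form, then use the compatibility between block structure and mutations together with the sign-equivalence constraint to force at most one non-trivial block, and finally invoke \Cref{main} to classify that block.

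First, by the standard decomposition of reducible matrices applied to the sign-skew-symmetric matrix $B$ (so that reducibility forces the off-diagonal blocks to vanish on both sides simultaneously), there is a permutation $\sigma \in \mathfrak{S}_n$ with $\sigma(B) = B_1 \oplus B_2 \oplus \cdots \oplus B_r$, where each $B_i$ is an irreducible square block. Since \Cref{sigma} shows sign-equivalence is invariant under permutation, I may replace $B$ by $\sigma(B)$ and assume $B$ itself has this block-diagonal form. A direct inspection of the mutation rule \eqref{matrix mutation} shows that mutating at an index $k$ lying in block $i$ preserves the block structure and only alters $B_i$, i.e., $\mu_k(B) = B_1 \oplus \cdots \oplus \mu_k(B_i) \oplus \cdots \oplus B_r$. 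Note also that any irreducible block of size $1$ is forced to be $(0)$ by sign-skew-symmetry, so every nonzero irreducible block has size at least $2$ and therefore (being irreducible) has no zero row or column.

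Next, I would argue that at most one $B_i$ is nonzero. Suppose for contradiction that $B_i$ and $B_j$ are both nonzero with $i \neq j$. Fix any index $k$ inside block $i$. Since $[B] = \{B, -B\}$, the mutation $\mu_k(B)$ equals either $B$ or $-B$. The case $\mu_k(B) = -B$ forces $B_j = -B_j$ (block $j$ is untouched by $\mu_k$), so $B_j = 0$, a contradiction. Hence $\mu_k(B) = B$ for every $k$ in block $i$, which means $\mu_k(B_i) = B_i$ for every such $k$. By the argument used in \Cref{lem3}, this forces the $k$-th row (and, by sign-skew-symmetry, the $k$-th column) of $B_i$ to vanish for every $k$ in block $i$, so $B_i = 0$, another contradiction.

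Finally, the unique nonzero block $B_{i_0}$ (which must exist because the zero matrix is excluded from being sign-equivalent by convention) satisfies $\mu_k(B_{i_0}) = -B_{i_0}$ for every $k$ in block $i_0$; the alternative $\mu_k(B_{i_0}) = B_{i_0}$ is ruled out by the same row-vanishing argument. Hence $[B_{i_0}] = \{B_{i_0}, -B_{i_0}\}$, so $B_{i_0}$ is irreducible sign-equivalent and, by \Cref{main}, permutation-equivalent to an element of $X_{\ise}$. Consolidating the (possibly empty) family of remaining zero blocks into a single zero matrix $O$ via another permutation yields the claimed decomposition. The only real subtlety is the clean case analysis on $\mu_k(B) \in \{B, -B\}$ and its propagation across blocks; no new tool beyond \Cref{lem3}, \Cref{sigma}, and \Cref{main} is needed.
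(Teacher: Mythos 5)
Your proof is correct and follows essentially the same route as the paper: permute $B$ into a direct sum of irreducible blocks, observe that mutations act blockwise, and combine \Cref{sigma}, \Cref{lem3} and \Cref{main} to force a single nonzero block lying in $X_{\ise}$. The only difference is that you spell out the case analysis $\mu_k(B)\in\{B,-B\}$ across blocks explicitly, which the paper compresses into one sentence.
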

\begin{proof}
	 Let $B\neq O_{n\times n}$. Then, there exists a permutation matrix $P$ corresponding to the permutation $\sigma\in \mathfrak{S}_n$, such that 
	\begin{align}
		\sigma(B)=PBP^{T}=%\begin{pmatrix}
	%B_1 &  \\  & B_2 \\  & & \ddots \\  &  &  & B_r \\ & & & &  O \end{pmatrix
	\text{diag}(B_1,B_2,\dots,B_r,O)\notag
	\end{align} is a block diagonal matrix, where $r\geq 1$ and $B_i$ is irreducible for $i=1,\dots,r$. Note that any mutation $\mu_i$ essentially only mutates one of the blocks of $\sigma(B)$ and keeps others invariant. Hence, by \Cref{sigma} and \Cref{lem3}, we conclude that $r=1$ and $B_1$ is irreducible sign-equivalent. Then, according to \Cref{main}, we get $B_1\in X_{\ise}$.
\end{proof}
\begin{remark}
	In fact, according to the proof of \Cref{main}, the set of all the sign-equivalent sign-skew-symmetric matrices is also $X_{\ise}$ because we only use the symmetry of signs. 
\end{remark}

\section{A Diophantine explanation for rank 2 cluster algebras of finite and affine type}\label{S4}
In this section, we aim to give an explanation for the differences between finite type and affine type cluster algebras of rank 2 by diophantine equations. Firstly, we recall an important theorem given by Chen-Li in \cite{CL24}. Assume that $\mcA$ is of finite type with $m$ clusters 
	$(c_{1;i}(x_1,x_2),c_{2;i}(x_1,x_2)),i=1,\dots,m.$ Without loss of generality, we label them same as  \Cref{clusters of finite type}.
\begin{theorem}[{\cite[Theorem 3.1]{CL24}}]\label{mutation invariant of rank 2}
	A non-constant rational function $\mcT(x_1,x_2)$ is a mutation invariant of $\mcA$ if and only if there is a symmetric polynomial $\Phi(u_1,\cdots,u_m)$ over $\mbQ$ and a rational function $F(X_1,X_2)$, such that 
	\begin{align}\label{main formula}
		\mcT(x_1,x_2)=&{}\Phi(F(c_{1;1}(x_1,x_2),c_{2;1}(x_1,x_2)),\cdots,F(c_{1;m}(x_1,x_2),c_{2;m}(x_1,x_2))).
	\end{align}
\end{theorem}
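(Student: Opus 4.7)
The plan is to prove both directions directly from the definitions, relying only on two ingredients special to the finite-type rank 2 setting: the finiteness of the labeled cluster set $\{(c_{1;i},c_{2;i})\}_{i=1}^m \subset \mcF^2$, and the fact that for every $t\in\mbT_2$ the substitution $\phi_t\colon x_j \mapsto x_{j;t}$, regarded as a $\mbQ$-automorphism of $\mcF$, permutes this finite set.

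For the ``if'' direction, suppose $\mcT$ is given by \eqref{main formula} with $\Phi$ symmetric. Fix $t\in\mbT_2$. By the second ingredient, there exists a permutation $\sigma_t\in\mathfrak{S}_m$ such that $(\phi_t(c_{1;i}),\phi_t(c_{2;i}))=(c_{1;\sigma_t(i)},c_{2;\sigma_t(i)})$ for every $i$. Applying $\phi_t$ to the right-hand side of \eqref{main formula} permutes the arguments of $\Phi$ by $\sigma_t$, and the symmetry of $\Phi$ absorbs $\sigma_t$. This yields $\mcT(x_{1;t},x_{2;t})=\mcT(x_1,x_2)$, which is exactly the defining condition for a mutation invariant.

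For the ``only if'' direction, the construction is essentially tautological. Given a mutation invariant $\mcT(x_1,x_2)$, the defining identity reads $\mcT(c_{1;i},c_{2;i})=\mcT(x_1,x_2)$ for each $i\in\{1,\dots,m\}$. Setting $F(X_1,X_2):=\mcT(X_1,X_2)$ and $\Phi(u_1,\dots,u_m):=(u_1+\cdots+u_m)/m$, which is a symmetric polynomial over $\mbQ$, the right-hand side of \eqref{main formula} collapses to the average of $m$ equal copies of $\mcT(x_1,x_2)$, giving the required expression.

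The only conceptually nontrivial step is the verification that $\phi_t$ permutes the labeled cluster set. This rests on the standard fact that in a finite-type cluster algebra, the set of clusters (viewed as subsets of $\mcF^n$) is intrinsic to the cluster algebra and does not depend on the choice of initial seed. In rank 2 this can be read off directly from \Cref{clusters of finite type} by tracking how the cyclic list of clusters transforms under the generators $\mu_1$ and $\mu_2$ applied to the initial seed, so it is a bookkeeping verification rather than a genuine obstacle.
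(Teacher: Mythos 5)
The paper does not actually prove this statement — it is imported verbatim from \cite[Theorem 3.1]{CL24} — so there is no internal proof to compare against. Judged on its own, your argument is essentially correct, and you have correctly identified that the ``only if'' direction is tautological: taking $F=\mcT$ and $\Phi=\frac{1}{m}(u_1+\cdots+u_m)$ works because each labeled cluster $(c_{1;i},c_{2;i})$ equals $(x_{1;t},x_{2;t})$ for some $t$, so the defining identity gives $\mcT(c_{1;i},c_{2;i})=\mcT(x_1,x_2)$ for every $i$.

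The one step you should tighten is the permutation lemma. The fact you invoke — that the set of clusters \emph{viewed as subsets of $\mcF^2$} is independent of the base seed — concerns unlabeled clusters, whereas \eqref{main formula} uses ordered pairs and $F$ need not be symmetric in $X_1,X_2$. The list of $m$ labeled clusters is in general \emph{not} closed under swapping components: for type $B_2$, $(x_2,x_1)$ does not occur in \Cref{clusters of finite type}. So ``intrinsic as a set of subsets'' does not by itself yield that $\phi_t$ permutes the $m$ ordered pairs. The clean rank-2 argument is: $\mu_k(B)=-B$ for $k=1,2$, and the exchange binomial $\prod_j x_j^{[b_{jk}]_+}+\prod_j x_j^{[-b_{jk}]_+}$ is unchanged under $B\mapsto -B$, so the cluster pattern re-based at any seed $t$ is computed by exactly the same Laurent expressions $c_{j;i}$; hence $\bigl(c_{1;i}(x_{1;t},x_{2;t}),c_{2;i}(x_{1;t},x_{2;t})\bigr)$ is the labeled cluster at the vertex reached from $t$ by the mutation path defining the $i$-th cluster, and $\phi_t$, being a $\mbQ$-automorphism of $\mcF$, restricts to a bijection of the finite set of labeled ordered pairs. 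With that substitution your ``if'' direction goes through verbatim.
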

In particular, when $\Phi(u_1,\cdots,u_m)\in \mbQ_{\geq 0}[u_1,\cdots,u_m]$ and $F(X_1,X_2)\in \mbQ_{\geq 0}[X_1,X_2]$ are both non-constant, it is direct that $\mcT(x_1,x_2)$ is a non-constant Laurent polynomial over $\mbQ_{\geq 0}$. Then a class of Diophantine equations with the initial solution $(a,b)$ are \begin{align}
	\mcT(x_1,x_2)=\mcT(a,b),\label{finite equation}
\end{align} where $(a,b)\in \mbN_+^2$. Note that these equations can be adjusted to with integer coefficients. In \cite{CL24}, there are several important Diophantine equations with finite type cluster algebraic structures of rank 2 as follows:
\begin{enumerate}
    \item (Type $A_1\times A_1$) $x_1^2x_2+x_1x_2^2+2x_1+2x_2=6x_1x_2.$
	\item (Type $A_2$) $x_{1}^{2}x_{2}+x_{1}x_{2}^{2}+x_{1}^2+x_{2}^2+2x_{1}+2x_{2}+1=9x_{1}x_{2}.$
	\item (Type $B_2$) $x_{2}^4+x_{1}^2x_{2}^2+2x_{2}^2+x_{1}^2+2x_{1}+1=8x_{1}x_{2}^2.$
	\item (Type $G_2$) $x_{2}^4+x_{1}x_{2}^3+x_{2}^3+x_{1}^2x_{2}+2x_{1}x_{2}+x_{1}^2+x_{2}+2x_{1}+1=11x_{1}x_{2}^2.$
\end{enumerate} 
According to \cite[Lemma 3.1, Lemma 3.3, Lemma 3.5 \& Lemma 3.6]{CL24}, there are only finite positive integer solutions to the four equations above. Furthermore, all the solutions to them can be generated by the initial solution $(1,1)$ through the corresponding cluster mutations. 
\begin{definition}\label{reductive}
	Let $L(x_1,x_2)$ be a (reduced) Laurent polynomial over $\mbQ_{\geq 0}$ as follows \begin{align}
		L(x_1,x_2)=\dfrac{f(x_1,x_2)}{x_1^ux_2^v},\notag
	\end{align} where $f(x_1,x_2)\in \mbQ_{\geq 0}[x_1,x_2]$ and $u,v\in \mbN$. We call $L(x_1,x_2)$ \emph{reductive} if there is a term $x_1^{u^{\prime}}x_2^{v^{\prime}}$ in $f(x_1,x_2)$, such that $u^{\prime}\geq u$, $v^{\prime}\geq v$ and $(u^{\prime},v^{\prime})\neq (u,v)$. More precisely, there is a sum term of polynomial $x_1^{u-u^{\prime}}x_2^{v-v^{\prime}}$,  such that $L(x_1,x_2)=x_1^{u-u^{\prime}}x_2^{v-v^{\prime}}+L^{\prime}(x_1,x_2)$, where $L^{\prime}(x_1,x_2)\in \mbQ_{\geq 0}[x_{1}^{\pm 1},x_{2}^{\pm 1}]$.
\end{definition}
\begin{example}\label{reductive ex}
	The corresponding four Laurent polynomials of type $A_1\times A_1, A_2, B_2$ and $G_2$ above are  reductive. In particular, all the non-constant polynomials in $\mbQ_{\geq 0}[x_1,x_2]$ are reductive.
\end{example}
\begin{lemma}\label{closed}
	Let $L_1(x_1,x_2), L_2(x_1,x_2) \in \mbQ_{\geq 0}[x_{1}^{\pm 1},x_{2}^{\pm 1}]$. If $L_1(x_1,x_2)$ is reductive, then $L_1(x_1,x_2)+L_2(x_1,x_2)$ is also reductive. Moreover, if $L_2(x_1,x_2)$ is also reductive, then $L_1(x_1,x_2)L_2(x_1,x_2)$ is also reductive. 
\end{lemma}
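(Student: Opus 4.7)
The plan is to recast reductivity as a simple monomial-support condition on Laurent polynomials with non-negative coefficients. Under this reformulation, preservation under sums and products becomes transparent because no cancellations can occur in $\mbQ_{\geq 0}[x_1^{\pm 1},x_2^{\pm 1}]$. The first step is thus to prove the reformulation: writing any $L\in \mbQ_{\geq 0}[x_1^{\pm 1},x_2^{\pm 1}]$ uniquely as $L=\sum_{(a,b)\in\mbZ^2}c_{a,b}\, x_1^ax_2^b$ with $c_{a,b}\in \mbQ_{\geq 0}$, I claim that $L$ is reductive if and only if there exists $(a_0,b_0)\in \mbN^2\setminus\{(0,0)\}$ with $c_{a_0,b_0}>0$. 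This follows from the bijection between terms of $f$ and nonzero coefficients of $L$ under the identification $L=f(x_1,x_2)/(x_1^ux_2^v)$: a term $x_1^{u'}x_2^{v'}$ of $f$ with $u'\geq u$, $v'\geq v$, $(u',v')\neq(u,v)$ corresponds to a monomial $c_{a_0,b_0}x_1^{a_0}x_2^{b_0}$ of $L$ with $(a_0,b_0)=(u'-u,v'-v)\in \mbN^2\setminus\{(0,0)\}$, and vice versa. Note that this reformulation is intrinsic to $L$ and makes no reference to the choice of $(u,v)$.

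For the sum, suppose $L_1$ is reductive with witness $(a_0,b_0)$ and coefficient $c^{(1)}_{a_0,b_0}>0$. Since $L_2\in \mbQ_{\geq 0}[x_1^{\pm 1},x_2^{\pm 1}]$ has non-negative coefficients, the coefficient of $x_1^{a_0}x_2^{b_0}$ in $L_1+L_2$ equals $c^{(1)}_{a_0,b_0}+c^{(2)}_{a_0,b_0}\geq c^{(1)}_{a_0,b_0}>0$. The same pair $(a_0,b_0)$ therefore witnesses reductivity of $L_1+L_2$ via the reformulation.

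For the product, assume further that $L_2$ is reductive with witness $(a_1,b_1)\in\mbN^2\setminus\{(0,0)\}$ and $c^{(2)}_{a_1,b_1}>0$. The coefficient of $x_1^{a_0+a_1}x_2^{b_0+b_1}$ in $L_1L_2$ is a sum of non-negative products $c^{(1)}_{a,b}\,c^{(2)}_{a_0+a_1-a,\,b_0+b_1-b}$ over all splittings $(a,b)+(a_0+a_1-a,b_0+b_1-b)=(a_0+a_1,b_0+b_1)$, and the splitting $(a,b)=(a_0,b_0)$ contributes $c^{(1)}_{a_0,b_0}\,c^{(2)}_{a_1,b_1}>0$, which no other non-negative summand can cancel. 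Moreover $(a_0+a_1,b_0+b_1)\in\mbN^2$ is nonzero because $(a_0,b_0)\neq(0,0)$ forces at least one coordinate strictly positive. Invoking the reformulation once more, $L_1L_2$ is reductive. There is no genuine obstacle in the argument; the only point requiring care is the equivalence established in the first paragraph, which ensures that the witnesses found for the sum and product can be stated directly, without needing to reintroduce a reduced form for the output.
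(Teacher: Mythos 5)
Your proof is correct and follows the same idea the paper invokes: since all coefficients lie in $\mbQ_{\geq 0}$, no cancellation can occur, so a witnessing monomial $x_1^{a_0}x_2^{b_0}$ with $(a_0,b_0)\in\mbN^2\setminus\{(0,0)\}$ survives under addition and the product of two witnesses yields one for $L_1L_2$. The paper leaves this as a ``direct calculation''; your write-up simply supplies the details (and, usefully, makes explicit that reductivity is an intrinsic condition on the support of $L$, independent of the chosen representation $f/(x_1^ux_2^v)$).
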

\begin{proof}
	Note that both $L_1(x_1,x_2)$ and $L_2(x_1,x_2)$ are subtraction-free. It can be obtained by direct calculation and the definition of reductive Laurent polynomials.
\end{proof}
In the following, we aim to give a main theorem about the finiteness of the \Cref{finite equation}. 
\begin{theorem}\label{finite solution}
	There are only finite positive integer solutions to the \Cref{finite equation}.
\end{theorem}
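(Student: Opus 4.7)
The strategy is to show that the set $\{(x_1,x_2)\in\mbN_+^2 : \mcT(x_1,x_2)=\mcT(a,b)\}$ is bounded in both coordinates, hence finite. The argument proceeds by extracting pure-power monomial summands from $\mcT$ and then using non-negativity of its remaining terms.

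Since each cluster variable $c_{j;i}$ lies in $\mbQ_{\geq 0}[x_1^{\pm 1}, x_2^{\pm 1}]$ by the Laurent phenomenon and positivity, and since $\Phi$ and $F$ have non-negative rational coefficients, the mutation invariant
\[
\mcT(x_1,x_2) = \Phi\bigl(F(c_{1;1},c_{2;1}),\dots, F(c_{1;m},c_{2;m})\bigr)
\]
is a non-constant element of $\mbQ_{\geq 0}[x_1^{\pm 1}, x_2^{\pm 1}]$. Writing $\mcT(x_1,x_2) = f(x_1,x_2)/(x_1^u x_2^v)$ with $f\in \mbQ_{\geq 0}[x_1,x_2]$ and $(u,v)$ minimal, the main step is to prove that $f$ contains both a monomial $x_1^{u+\alpha}x_2^v$ (with $\alpha\geq 1$) and a monomial $x_1^u x_2^{v+\beta}$ (with $\beta\geq 1$); equivalently, $\mcT$ has pure-power summands $x_1^\alpha$ and $x_2^\beta$. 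Granting this, any positive integer pair with $\mcT(x_1,x_2)=k:=\mcT(a,b)$ satisfies $x_1^\alpha\leq k$ and $x_2^\beta\leq k$ by discarding the remaining non-negative contributions, yielding the desired finiteness.

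To verify the monomial claim, I would invoke \Cref{closed} and \Cref{reductive ex} to propagate reductiveness under sums and products, and then perform a case-by-case analysis over the four rank-$2$ finite types $A_1\times A_1,\ A_2,\ B_2,\ G_2$ using the explicit cluster data in \Cref{clusters of finite type}. In each case, certain clusters $(c_{1;i},c_{2;i})$ yield, upon applying the non-constant polynomial $F$, contributions that are polynomials in $x_1$ or in $x_2$ alone; the non-constant symmetric polynomial $\Phi$ then combines these (through at least one of its monomials $\prod u_i^{d_i}$ with $\sum d_i\geq 1$) into the required pure-power summands of $\mcT$.

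The main obstacle is the $A_1\times A_1$ case, where the cluster variables $\{x_1,\ 2/x_1,\ x_2,\ 2/x_2\}$ are decoupled between the two coordinates, so the required monomial structure of $\mcT$ is more subtle; one must use the genuinely two-variable nature of the pair $(\Phi,F)$, possibly through products of reductive Laurent polynomials via \Cref{closed}, to produce both pure-power monomials simultaneously. For $A_2,\ B_2,\ G_2$ the analysis is more direct, since asymmetric clusters such as $(x_2,x_1)$ in type $A_2$, or $(x_1,(x_1+1)/x_2)$ in type $B_2$, give immediate access to polynomial contributions in each of the two variables. Throughout, the non-constancy of $\Phi$ and $F$ together with positivity of coefficients is the essential input.
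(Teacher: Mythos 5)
Your overall architecture (extract monomial summands of $\mcT$ with non-negative exponents, then use positivity of the remaining terms to bound the variables) is the same as the paper's, but the structural claim you reduce to is strictly stronger than what is true. You require $\mcT$ to contain \emph{two pure-power} summands $x_1^{\alpha}$ and $x_2^{\beta}$ with $\alpha,\beta\geq 1$. This fails already outside the $A_1\times A_1$ case you flag as the delicate one: take type $B_2$, $F(X_1,X_2)=X_1$ and $\Phi=u_1+\cdots+u_6$. Reading off the first components of the six clusters in \Cref{clusters of finite type} gives
\[
\mcT(x_1,x_2)=2x_1+\frac{2(x_2^2+1)}{x_1}+\frac{2(x_2^2+x_1^2+2x_1+1)}{x_1x_2^2},
\]
whose only Laurent-monomial summand with both exponents non-negative is $2x_1$; there is no summand $x_2^{\beta}$ with $\beta\geq 1$, so the bound $x_2^{\beta}\leq\mcT(a,b)$ is unavailable and your main step breaks, even though the theorem does hold for this $\mcT$.

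The correct (and weaker) target, which is what the paper proves, is the \emph{reductive} property of \Cref{reductive}: $\mcT$ contains a single monomial summand $x_1^{a}x_2^{b}$ with $a,b\geq 0$ and $(a,b)\neq(0,0)$. Such a summand bounds the variable with positive exponent, say $x_1\leq M_0$; finiteness in $x_2$ then does not require a pure power of $x_2$, because for each of the finitely many admissible values of $x_1$ the cleared equation $f(x_1,x_2)=\mcT(a,b)\,x_1^{u}x_2^{v}$ is a polynomial equation in $x_2$ with finitely many roots (in the example above, a nontrivial quartic in $x_2$). With this weaker target, your proposed case analysis essentially becomes the paper's Step 2: one exhibits, for each finite type, a ``cluster sequence'' along which the partial products of the $F$-evaluations stay reductive, closing under sums and products via \Cref{closed}, with a separate argument for $A_1\times A_1$ where some of these products degenerate to constants. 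One caveat worth recording for either approach: if $F$ depends on only one of its arguments, then in type $A_1\times A_1$ the invariant $\mcT$ may fail to involve $x_2$ at all (e.g.\ $F=X_1$, $\Phi=\sum u_i$ gives $\mcT=2x_1+4/x_1$), in which case $x_2$ is unconstrained; so the ``fix $x_1$, solve for $x_2$'' step needs $\mcT$ to genuinely depend on both variables.
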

\begin{proof} We will take two steps to complete the proof.\\
$\mathbf{Step 1}:$ We claim that if $L(x_1,x_2)$ is reductive, then there are finite positive integer solutions to the equation \begin{align}
	L(x_1,x_2)=L(a,b),\label{L equation}
\end{align} where $(a,b)\in \mbN_{+}^2$. In fact, the equation is equivalent to \begin{align}
	f(x_1,x_2)=L(a,b)x_1^ux_2^v.\notag
\end{align} Since $L(x_1,x_2)$ is reductive, we might assume that there is a term $x_1^{u^{\prime}}x_2^{v^{\prime}}$ in $f(x_1,x_2)$, such that $u^{\prime}> u$ and $v^{\prime}\geq v$. Hence, there must be an upper bound $M_0\in \mbN_+$ for $x_1$, that is $x_1\leq M_0$. This infers that there are only finite possible values of $x_1$ to the \Cref{L equation}. Moreover, the choices of $x_2$ are also finite after fixing $x_1$ and the claim holds.
\\
$\mathbf{Step 2}:$ Now, we focus on proving that $\mcT(x_1,x_2)$ in the \Cref{finite equation} is reductive. In what follows, without loss of generality, we can regard the non-zero coefficients as $1$ since they are all positive and will not influence the results.\\
$(1)$ If the symmetric polynomial $\Phi(u_1,\cdots,u_m)$ has a single term $u_i^\alpha$, where $\alpha\geq 1$. Then, it must have a term $u_1^\alpha$. Note that $F(X_1,X_2)\in \mbQ_{\geq 0}[X_1,X_2]$ and \begin{align}
		(c_{1;1}(x_1,x_2),c_{2;1}(x_1,x_2))=(x_1,x_2).\notag
	\end{align} Hence, according to  formula \eqref{main formula}, $\mcT(x_1,x_2)$ contains a term $(F(x_1,x_2))^{\alpha}\in \mbQ_{\geq 0}[x_1,x_2]$, which is reductive. By \Cref{closed}, we get $\mcT(x_1,x_2)$ is also reductive.\\
$(2)$ If the symmetric polynomial $\Phi(u_1,\cdots,u_m)$ has no single term $u_i^\alpha$, where $\alpha\geq 1$. \\ $(2.1)$ When $m=10,6,8$, we claim that there exists a \emph{cluster sequence} $\mcL_m=(l_1,l_2,\dots,l_m)$, where $l_i\in \{1,\dots,m\}$ and $l_i\neq l_j(i\neq j)$, such that the monomials \begin{align}u_{l_1}u_{l_2},\ u_{l_1}u_{l_2}u_{l_3},\ \dots,\ u_{l_1}u_{l_2}u_{l_3}\dots u_{l_m}\notag\end{align} are reductive after substituting each $u_{l_j}$ by $F(c_{1;l_j}(x_1,x_2),c_{2;l_j}(x_1,x_2))$ simultaneously (See \Cref{clusters of finite type}). Here, we denote them by $u_{l_1}u_{l_2}|_{F},\dots, u_{l_1}u_{l_2}u_{l_3}\dots u_{l_m}|_{F}$. In fact, we can take \begin{align}F_1(X_1,X_2)=X_1,\ F_2(X_1,X_2)=X_2.\notag\end{align} %Since $F(X_1,X_2)\in \mbQ_{\geq 0}[x_1,x_2]$ and \Cref{closed}, we only need to consider the case that \begin{align}
		%F_1(X_1,X_2)=X_1,\ F_2(X_1,X_2)=X_2.
	%\end{align}
In what follows, we always take $l_1=1$ and denote $F_i(c_{1;l_j}(x_1,x_2),c_{2;l_j}(x_1,x_2))$ by $F_{i,j}\ (\text{for}\ i=1,2)$.
\begin{itemize}[leftmargin=1em]\itemsep=0pt
\item Type $A_2\, (m=10)$: Take the cluster sequence $\mcL_{10}=(1,2,6,7,5,10,3,8,4,9)$. Then, we get $$\begin{array}{l}
	F_{1,1}F_{1,2}=x_2+1,\
	F_{2,1}F_{2,2}=x_2^2,\\
	F_{1,1}F_{1,2}F_{1,6}=x_2(x_2+1),\
	F_{2,1}F_{2,2}F_{2,6}=x_1x_2^2,\\
	F_{1,1}F_{1,2}F_{1,6}F_{1,7}=x_2^2(x_2+1),\
	F_{2,1}F_{2,2}F_{2,6}F_{2,7}=x_2^2(x_2+1),\\
	F_{1,1}F_{1,2}F_{1,6}F_{1,7}F_{1,5}=x_2(x_2+1)(x_1+1),\\
	F_{2,1}F_{2,2}F_{2,6}F_{2,7}F_{2,5}=x_1x_2^2(x_2+1),\\
	F_{1,1}F_{1,2}F_{1,6}F_{1,7}F_{1,5}F_{1,10}=x_1x_2(x_2+1)(x_1+1),\\
	F_{2,1}F_{2,2}F_{2,6}F_{2,7}F_{2,5}F_{2,10}=x_1x_2(x_2+1)(x_1+1),\\
	F_{1,1}F_{1,2}F_{1,6}F_{1,7}F_{1,5}F_{1,10}F_{1,3}=x_2(x_2+1)^2(x_1+1),\\
	F_{2,1}F_{2,2}F_{2,6}F_{2,7}F_{2,5}F_{2,10}F_{2,3}=(x_2+1)(x_1+1)(x_1+x_2+1),\\
	F_{1,1}F_{1,2}F_{1,6}F_{1,7}F_{1,5}F_{1,10}F_{1,3}F_{1,8}=(x_2+1)(x_1+1)(x_1+x_2+1),\\
	F_{2,1}F_{2,2}F_{2,6}F_{2,7}F_{2,5}F_{2,10}F_{2,3}F_{2,8}=\frac{(x_2+1)^2(x_1+1)(x_1+x_2+1)}{x_1},\\
	F_{1,1}F_{1,2}F_{1,6}F_{1,7}F_{1,5}F_{1,10}F_{1,3}F_{1,8}F_{1,4}=\frac{(x_2+1)(x_1+1)^2(x_1+x_2+1)}{x_2},\\
	F_{2,1}F_{2,2}F_{2,6}F_{2,7}F_{2,5}F_{2,10}F_{2,3}F_{2,8}F_{2,4}=\frac{(x_2+1)^2(x_1+1)(x_1+x_2+1)^2}{x_1^2x_2},\\
	F_{1,1}F_{1,2}F_{1,6}F_{1,7}F_{1,5}F_{1,10}F_{1,3}F_{1,8}F_{1,4}F_{1,9}=\frac{(x_2+1)(x_1+1)^2(x_1+x_2+1)^2}{x_1x_2^2},\\
	F_{2,1}F_{2,2}F_{2,6}F_{2,7}F_{2,5}F_{2,10}F_{2,3}F_{2,8}F_{2,4}F_{2,9}=\frac{(x_2+1)^2(x_1+1)^2(x_1+x_2+1)^2}{x_1^2x_2^2}.
\end{array}$$

\item Type $B_2\, (m=6)$: Take the cluster sequence $\mcL_6=(1,2,6,7,5,4)$. Then, we get 
$$\begin{array}{l}
	F_{1,1}F_{1,2}=x_2^2+1, \ F_{2,1}F_{2,2}=x_2^2,
	\\F_{1,1}F_{1,2}F_{1,6}=x_1(x_2^2+1),\ F_{2,1}F_{2,2}F_{2,6}=x_2(x_1+1),\\
F_{1,1}F_{1,2}F_{1,6}F_{1,3}=(x_2^2+1)^2,\ 
	F_{2,1}F_{2,2}F_{2,6}F_{2,3}=\frac{(x_2^2+x_1+1)(x_1+1)}{x_1},\\
	F_{1,1}F_{1,2}F_{1,6}F_{1,3}F_{1,5}=\frac{(x_2^2+1)^2(x_2^2+x_1^2+2x_1+1)}{x_1x_2^2},\\
	F_{2,1}F_{2,2}F_{2,6}F_{2,3}F_{2,5}=\frac{(x_1+1)^2(x_2^2+x_1+1)}{x_1x_2},\\
	F_{1,1}F_{1,2}F_{1,6}F_{1,3}F_{1,5}F_{1,4}=\frac{(x_2^2+1)^2(x_2^2+x_1^2+2x_1+1)^2}{x_1^2x_2^4},\\
	F_{2,1}F_{2,2}F_{2,6}F_{2,3}F_{2,5}F_{2,4}=\frac{(x_1+1)^2(x_2^2+x_1+1)^2}{x_1^2x_2^2}.
\end{array}$$
\item Type $G_2\, (m=8)$: Take the cluster sequence $\mcL_8=(1,2,8,3,7,6,5,4)$. Then, we get 
$$\begin{array}{l}
	F_{1,1}F_{1,2}=x_2^3+1,\
	F_{2,1}F_{2,2}=x_2^2,\\
	F_{1,1}F_{1,2}F_{1,8}=x_1(x_2^3+1),\
	F_{2,1}F_{2,2}F_{2,8}=x_2(x_1+1),\\
	F_{1,1}F_{1,2}F_{1,8}F_{1,3}=(x_2^3+1)^2,\
	F_{2,1}F_{2,2}F_{2,8}F_{2,3}=\frac{(x_1+1)(x_2^3+x_1+1)}{x_1},\\
	F_{1,1}F_{1,2}F_{1,8}F_{1,3}F_{1,7}=\frac{(x_2^3+1)^2(x_2^3+x_1^3+3x_1^2+3x_1+1)}{x_1x_2^3},\\
	F_{2,1}F_{2,2}F_{2,3}F_{2,8}F_{2,7}=\frac{(x_1+1)^2(x_2^3+x_1+1)}{x_1x_2},\\
	F_{1,1}F_{1,2}F_{1,3}F_{1,8}F_{1,7}F_{1,6}=\frac{(x_2^3+1)^2(x_2^3+x_1^3+3x_1^2+3x_1+1)^2}{x_1^2x_2^6},\\
	F_{2,1}F_{2,2}F_{2,3}F_{2,8}F_{2,7}F_{2,6}=\frac{(x_1+1)^2(x_2^3+x_1+1)(x_2^3+x_1^2+2x_1+1)}{x_1^2x_2^3},\\
	F_{1,1}F_{1,2}F_{1,3}F_{1,8}F_{1,7}F_{1,6}F_{1,5}=\frac{(x_2^3+1)^2(x_2^3+x_1^3+3x_1^2+3x_1+1)^2(x_2^6+3x_1x_2^3+2x_2^3+x_1^3+3x_1^2+3x_1+1)}{x_1^4x_2^9},\\
	F_{2,1}F_{2,2}F_{2,3}F_{2,8}F_{2,7}F_{2,6}F_{2,5}=\frac{(x_1+1)^2(x_2^3+x_1+1)(x_2^3+x_1^2+2x_1+1)^2}{x_1^3x_2^5},\\
	F_{1,1}F_{1,2}F_{1,3}F_{1,8}F_{1,7}F_{1,6}F_{1,5}F_{1,4}=\frac{(x_2^3+1)^2(x_2^3+x_1^3+3x_1^2+3x_1+1)^2(x_2^6+3x_1x_2^3+2x_2^3+x_1^3+3x_1^2+3x_1+1)^2}{x_1^6x_2^{12}},\\
	F_{2,1}F_{2,2}F_{2,3}F_{2,8}F_{2,7}F_{2,6}F_{2,5}F_{2,4}=\frac{(x_1+1)^2(x_2^3+x_1+1)^2(x_2^3+x_1^2+2x_1+1)^2}{x_1^4x_2^6}.\\
\end{array}$$
\end{itemize} According to the three calculating results for type $A_2, B_2$ and $G_2$ as above, we conclude that $\prod_{j=1}^{k} F_{i,l_j}$ are always reductive, where $i\in \{1,2\}$ and $k\in \{2,\dots,m\}$. Since $F(X_1,X_2)$ is a non-constant polynomial over $\mbQ_{\geq 0}$, according to \Cref{closed}, the claim holds. 

Note that $\Phi(u_1,\dots,u_m)$ is a non-constant symmetric polynomial over $\mbQ_{\geq 0}$. There must be a term corresponding to the sequence $\mcL_m=(l_1,l_2,\dots,l_m)$ in the form of 
\begin{align}
	u_{l_1}^{\alpha_1}u_{l_2}^{\alpha_2}\dots u_{l_m}^{\alpha_m},\ \alpha_1\geq \alpha_2 \geq \dots \geq \alpha_m.
\notag\end{align} It can be written as the products of $u_{l_1},u_{l_1}u_{l_2}, u_{l_1}u_{l_2}u_{l_3}, \dots, u_{l_1}u_{l_2}u_{l_3}\dots u_{l_m}$, where $u_{l_1}=u_1$. Therefore, by \Cref{closed} and the claim above, we obtain a reductive term of $\mcT(x_1,x_2)$: \begin{align}u_{l_1}^{\alpha_1}u_{l_2}^{\alpha_2}\dots u_{l_m}^{\alpha_m}|_{F}=(F(c_{1;l_1}(x_1,x_2),c_{2;l_1}(x_1,x_2)))^{\alpha_1}\dots (F(c_{1;l_m}(x_1,x_2),c_{2;l_m}(x_1,x_2)))^{\alpha_m},\notag\end{align} which implies that $\mcT(x_1,x_2)$ is also reductive. 
\\
$(2.2)$ When $m=4$, that is of type $A_1\times A_1\, $: Take the sequence $\mcL_4=(1,2,4,3)$. Then, we get 
$$\begin{array}{l}
	F_{1,1}F_{1,2}=2, \ F_{2,1}F_{2,2}=x_2^2,
	\\F_{1,1}F_{1,2}F_{1,4}=2x_1,\ F_{2,1}F_{2,2}F_{2,4}=2x_2,\\
	F_{1,1}F_{1,2}F_{1,4}F_{1,3}=4,\ 
	F_{2,1}F_{2,2}F_{2,4}F_{2,3}=4.
\end{array}$$ It implies that the constants $F_{1,1}F_{1,2}$, $F_{1,1}F_{1,2}F_{1,4}F_{1,3}$ and $F_{2,1}F_{2,2}F_{2,4}F_{2,3}$ are not reductive. Since $\Phi(u_1,u_2,u_3,u_4)$ is a non-constant symmetric polynomial, there must be a sum term as follows:
\begin{align}
	u_1^{\alpha_1}u_2^{\alpha_2}u_4^{\alpha_4}u_3^{\alpha_3},\ \alpha_1\geq \alpha_2\geq \alpha_4\geq \alpha_3.\notag
\end{align} Note that it can be written as the products of $u_1,u_1u_2, u_1u_2u_4,  u_1u_2u_4u_3$. If the products contain $u_1$ or $u_1u_2u_4$, then by the calculation above and \Cref{closed}, we have both $u_1^{\alpha_1}u_2^{\alpha_2}u_4^{\alpha_4}u_3^{\alpha_3}|_{F}$ and $\mcT(x_1,x_2)$ are reductive. 

Now, assume that the products only consist of $u_1u_2$ or $u_1u_2u_4u_3$. Since $\Phi(u_1,u_2,u_3,u_4)$ is a non-constant symmetric polynomial, it must contain a sum term as follows:
\begin{align}
	(u_1u_2u_4u_3)^{\beta}[(u_1u_2)^{\alpha}+(u_1u_3)^{\alpha}+(u_1u_4)^{\alpha}+(u_2u_3)^{\alpha}+(u_2u_4)^{\alpha}+(u_3u_4)^{\alpha}],\notag
\end{align} where $\beta\geq 0$, $\alpha\geq 0$ and $(\beta, \alpha)\neq (0,0)$. 
\begin{itemize}[leftmargin=1em]\itemsep=3pt
\item If $F(X_1,X_2)=X_1^sX_2^t$ and $(s,t)\neq (0,0)$, then $(u_1u_2u_4u_3)^{\beta}|_{F}$ is a constant as the following:
\begin{align}
	(u_1u_2u_4u_3)^{\beta}|_{F}&=[(F^s_{1,1}F^t_{2,1})(F^s_{1,2}F^t_{2,2})(F^s_{1,4}F^t_{2,4})(F^s_{1,3}F^t_{2,3})]^{\beta}\notag\\ &=(F_{1,1}F_{1,2}F_{1,4}F_{1,3})^{s\beta}(F_{2,1}F_{2,2}F_{2,4}F_{2,3})^{t\beta}\notag\\&=4^{(s+t)\beta}.\notag
\end{align} Since $\mcT(x_1,x_2)$ is non-constant, we can assume that $\alpha\geq 1$. Note that there are two sum terms in $\sum \limits_{1\leq i< j\leq 4}(u_iu_j)^{\alpha}|_{F}$ as follows provided by $(u_1u_2)^{\alpha}$ and $(u_1u_4)^{\alpha}$: \begin{align}
	(2^sx_2^{2t})^{\alpha},\ (2^tx_1^{2s})^{\alpha}.\label{res1}
\end{align} Hence, at least one of \eqref{res1} is reductive, which implies that $\sum \limits_{1\leq i< j\leq 4}(u_iu_j)^{\alpha}|_{F}$ and $\mcT(x_1,x_2)$ are reductive by \Cref{closed}.
\item If there are at least two terms $X_1^sX_2^t$ and $X_1^pX_2^q$ in $F(X_1,X_2)$, such that $(s,t),(p,q)\neq (0,0)$ and $(s,t)\neq (p,q)$. Without loss of generality, we assume that $s> p$. According to \Cref{closed} and the discussion above, we get $\sum \limits_{1\leq i< j\leq 4}(u_iu_j)^{\alpha}|_{F}$ is reductive and only need to prove that $u_1u_2u_4u_3|_{F}$ is reductive. Note that $u_1u_2u_4u_3|_{(X_1^sX_2^t+X_1^pX_2^q)}$ is  \begin{align}
	(x_1^sx_2^t+x_1^px_2^q)\left(\frac{2^sx_2^t}{x_1^s}+\frac{2^px_2^q}{x_1^p}\right)\left(\frac{2^{s+t}}{x_1^sx_2^t}+\frac{2^{p+q}}{x_1^px_2^q}\right)\left(\frac{2^tx_1^s}{x_2^t}+\frac{2^qx_1^p}{x_2^q}\right),\notag
\end{align} and there is a sum term 
\begin{align}
	x_1^sx_2^t\cdot \frac{2^px_2^q}{x_1^p}\cdot\frac{2^{p+q}}{x_1^px_2^q}\cdot\frac{2^tx_1^s}{x_2^t}=\frac{2^{2p+q+t}x_1^{2s}}{x_1^{2p}},\notag
\end{align} which is reductive by $s>p$. Hence, by \Cref{closed}, we get $u_1u_2u_4u_3|_{F}$ is reductive. Moreover, it implies that $\mcT(x_1,x_2)$ is also reductive.
\end{itemize}
%Note that there is a term $X_1^sX_2^t$ in $F(X_1,X_2)$ such that $(s,t)\neq (0,0)$. Therefore, we get the following two sum terms in $\mcT(x_1,x_2)$ which are provided by $(u_1u_2)^{\alpha}$ and $(u_1u_4)^{\alpha}$:
%\begin{align}
%	(2^sx_2^{2t})^{\alpha},\ (2^tx_1^{2s})^{\alpha}.
%\end{align} Then, at least one of them is reductive, which implies that $\mcT(x_1,x_2)$ is reductive.
Finally, according to Step 1 and Step 2, we conclude that there are only finite positive integer solutions to the \Cref{finite equation}.
\end{proof}
Furthermore, there is a direct corollary of \Cref{finite solution} as follows.
\begin{corollary}\label{bound}
	For any positive integer solution $(x_1,x_2)$ to the \Cref{finite equation}, the inequality holds: \begin{align}\min(x_1,x_2)\leq [\mcT(a,b)]+1.\notag\end{align}
\end{corollary}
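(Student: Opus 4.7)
The plan is to exploit the reductivity of $\mcT(x_1,x_2)$ that is already extracted inside the proof of \Cref{finite solution}. Step 2 of that argument shows, for every admissible $\mcT$ arising from \eqref{main formula}, that $\mcT(x_1,x_2)\in\mbQ_{\geq 0}[x_1^{\pm1},x_2^{\pm1}]$ is reductive in the sense of \Cref{reductive}. Hence there exist $(p,q)\in\mbN^2$ with $p+q\geq 1$ and $\mcT'(x_1,x_2)\in\mbQ_{\geq 0}[x_1^{\pm1},x_2^{\pm1}]$ such that
\begin{align*}
\mcT(x_1,x_2)=x_1^{p}x_2^{q}+\mcT'(x_1,x_2).
\end{align*}

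For any positive integer solution $(x_1,x_2)$ of \Cref{finite equation}, all coefficients of $\mcT'$ are non-negative and $x_1,x_2\geq 1$, so $\mcT'(x_1,x_2)\geq 0$. Combined with the defining equality $\mcT(x_1,x_2)=\mcT(a,b)$, I would deduce
\begin{align*}
\mcT(a,b)\;\geq\; x_1^{p}x_2^{q}\;\geq\; \min(x_1,x_2),
\end{align*}
where the second inequality uses $x_1,x_2\geq 1$ together with the fact that at least one of $p,q$ is at least $1$: if $p\geq 1$, then $x_1^{p}x_2^{q}\geq x_1\geq \min(x_1,x_2)$, and symmetrically if $q\geq 1$.

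Since $\min(x_1,x_2)$ is a positive integer and $\min(x_1,x_2)\leq \mcT(a,b)<[\mcT(a,b)]+1$, the inequality $\min(x_1,x_2)\leq [\mcT(a,b)]+1$ follows at once. I do not expect any substantial obstacle here: the corollary is essentially a direct byproduct of the reductive decomposition already produced inside the proof of \Cref{finite solution}, and the only point one must be careful about is that the distinguished monomial $x_1^{p}x_2^{q}$ genuinely satisfies $(p,q)\neq(0,0)$, which is precisely the content of \Cref{reductive}.
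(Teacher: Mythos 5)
Your argument is correct and is exactly the intended one: the paper offers no separate proof of \Cref{bound} (it is called a direct corollary of \Cref{finite solution}), and the reductive decomposition $\mcT(x_1,x_2)=x_1^{p}x_2^{q}+\mcT'(x_1,x_2)$ with $(p,q)\neq(0,0)$ and $\mcT'$ subtraction-free, established in Step~2 of that proof, immediately gives $\min(x_1,x_2)\leq x_1^{p}x_2^{q}\leq \mcT(a,b)<[\mcT(a,b)]+1$ just as you write. The one caveat you inherit from \Cref{reductive} itself is that the distinguished monomial there is taken with coefficient $1$ (the paper normalizes all positive coefficients to $1$ in Step~2 of the proof of \Cref{finite solution}), and it is precisely this normalization that makes the bound $[\mcT(a,b)]+1$ independent of the coefficients of $\Phi$ and $F$.
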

\begin{remark}
	Although it is difficult to determine whether all the solutions to \Cref{finite equation} can be generated by the initial solution $(a,b)$ through finite mutations, we can still calculate and deal with each concrete equation by \Cref{finite solution} and \Cref{bound}. Hence, we call them \emph{Diophantine equations of finite type with cluster algebraic structures}. To some degree, we partially answered the question given by \cite[Question 3.8]{CL24}.
\end{remark}
\begin{remark}\label{diophantine explanation}
Now, we compare the \Cref{finite equation} of finite type with the ones of affine type as follows. By \cite[Lemma 3.9 \& Lemma 3.12]{CL24}, there are infinite positive integer solutions to the Diophantine equations
\begin{align}
\mcT_1(x_1,x_2)=\dfrac{x_1^2+x_2^2+1}{x_1x_2}=3= \mcT_1(1,1),\notag
\end{align}
and 
\begin{align}
\mcT_2(x_1,x_2)=\dfrac{x_{2}^4+x_{1}^2+2x_{1}+1}{x_{1}x_{2}^2}=5=\mcT_2(1,1).\notag
\end{align}
Furthermore, all the solutions to them can be generated by the initial solution $(1,1)$ through the cluster mutations of $\mcA(2,-2)$ and $\mcA(1,-4)$ respectively. 

Note that in the proof of \Cref{finite solution}, we have proved that the Laurent polynomials $\mcT(x_1,x_2)$ in \Cref{finite equation} are reductive. Therefore, there are only finite solutions. However, the Laurent polynomial $\mcT_1(x_1,x_2)$ and $\mcT_2(x_1,x_2)$ are not reductive, which allows them to have infinite positive integer solutions. This phenomenon can trace back to the differences between finite type and affine (infinite) type cluster algebras of rank 2 since the Laurent mutation invariants $\mcT_1(x_1,x_2)$ and $\mcT_2(x_1,x_2)$ are not from the formula \eqref{main formula}. Hence, we provide a Diophantine explanation for the differences between cluster algebras of finite type and affine type based on \Cref{finite solution} and \cite[Lemma 3.9 \& Lemma 3.12]{CL24}.
\end{remark}
%Note that the rational coefficients can be simultaneously adjusted to integer coefficients. Hence, it is equivalent to take the symmetric polynomial $\Phi(X_1,\cdots,X_m)\in \mbZ[X_1,\dots,X_m]$ and the polynomial $F(X_1,X_2)\in \mbZ[X_1,X_2]$. 
\section{Positive integer points}\label{main section}
In this section, we aim to find all the positive integer points of two mutation invariants given by Lampe \cite{Lam16}, that is both the initial cluster variables and the mutation invariants are positive integers: $(x_1,x_2,x_3,\mcT(x_1,x_2,x_3))\in \mbN_+^4$. In particular, one of them \eqref{Lampe mutation invariant 0} corresponding to the Markov equation, which is called \emph{the Markov mutation invariant} was studied by Aigner \cite{Aig13}. However, we will give a different proof by use of cluster mutations. 
\subsection{Positive integer points for the Markov mutation invariant}\

Firstly, we give a preliminary lemma by use of cluster mutations of $\mcA(2,-2)$. Recall that the corresponding cluster mutation rules are: $$\begin{array}{c}\mu_{1}(x_1,x_2)=\left(\dfrac{x_2^2+1}{x_1},x_2\right),\ \mu_{2}(x_1,x_2)=\left(x_1,\dfrac{x_1^2+1}{x_2}\right).
	\end{array}$$ By \cite[Lemma 2.23]{CL24}, there is a mutation invariant as follows
	\begin{align}
		\mathcal{T}_1(x_1,x_2)=\dfrac{x_1^2+x_2^2+1}{x_1x_2},\notag
	\end{align} that is $\mathcal{T}_1( \mu_{i}(x_1,x_2))=\mathcal{T}_1(x_1,x_2)$, for 
		$i= 1,2$. In addition, the cluster mutation $\mu_i\ (\text{for}\ i=1,2)$ keeps the integrality of any solution $(a,b)$ to $\mcT_1(x_1,x_2)=k$, where $k\in \mbN$.
\begin{lemma}\label{reduced Aigner's theorem}
	Let $k\in \mbN$ and the Diophantine equation be as follows:\begin{align}
		x_1^2+x_2^2+1=kx_1x_2. \label{reduced Markov}
	\end{align} Then, it has positive integer solutions if and only if $k=3$.
\end{lemma}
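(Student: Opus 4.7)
The plan is to prove this by a descent argument (Vieta jumping) realized through the cluster mutation $\mu_2$ of $\mcA(2,-2)$. The sufficiency is immediate: $(x_1,x_2)=(1,1)$ gives $1+1+1 = 3 \cdot 1 \cdot 1$, confirming $k=3$ works. For necessity, I fix $k\in\mbN$ admitting a positive integer solution and aim to show $k=3$.

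The first step is to pick, among all positive integer solutions $(x_1,x_2)$, one minimizing $x_1+x_2$, and assume without loss of generality that $x_1\leq x_2$. The key observation is that $\mu_2$ produces a new solution $(x_1,x_2')$ with $x_2' = (x_1^2+1)/x_2$ that is again a pair of positive integers. Integrality follows because $x_2$ and $x_2'$ are the two roots of the quadratic $y^2-(kx_1)y+(x_1^2+1)=0$ in $y$, so $x_2'$ is an integer; positivity follows because the product $x_2x_2'=x_1^2+1>0$ and the sum $x_2+x_2'=kx_1>0$. Invariance of $\mcT_1$ under $\mu_2$ (recalled immediately above the lemma) ensures $(x_1,x_2')$ satisfies the same equation with the same $k$.

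The second step is to exploit minimality: $x_1+x_2'\geq x_1+x_2$ forces $x_2'\geq x_2$, i.e.\ $x_1^2+1\geq x_2^2$. Combined with $x_1\leq x_2$, this squeezes $x_1^2\leq x_2^2\leq x_1^2+1$, leaving only two subcases. If $x_2=x_1$, substitution gives $(k-2)x_1^2=1$, hence $x_1=1$ and $k=3$. If $x_2^2=x_1^2+1$, then $(x_2-x_1)(x_2+x_1)=1$ has no solution in positive integers. Thus $k=3$ is the only possibility.

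I do not expect any real obstacle here; the only point requiring care is justifying that $x_2'$ is a positive integer (so that the descent stays inside the solution set) and that the minimum is actually attained, which is immediate once a solution exists since $x_1+x_2\in\mbN_+$. The whole argument is essentially a cluster-theoretic reformulation of Vieta jumping, with the descent step incarnated as the mutation $\mu_2$, and it neatly prepares the inductive mechanism needed in the proof of \Cref{Aigner's theorem} via reduction to this lemma.
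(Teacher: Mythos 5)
Your proof is correct. Both you and the paper use the same engine --- Vieta jumping realized as the rank-2 cluster mutation --- but the logical organization differs in a way worth noting. The paper splits on $k$: for $k\leq 2$ it invokes $x_1^2+x_2^2\geq 2x_1x_2$ to rule out solutions outright, and for $k\geq 4$ it first excludes $x_1=1$ by a discriminant/factorization argument on $b^2-kb+2=0$ (so that every component of a solution is at least $2$), then runs an unbounded strict descent to reach a contradiction. You instead take a solution minimizing $x_1+x_2$ and apply a single mutation step: minimality forces $x_2'\geq x_2$, which together with $x_2x_2'=x_1^2+1$ squeezes $x_1^2\leq x_2^2\leq x_1^2+1$, and the two resulting subcases directly yield $(x_1,x_2)=(1,1)$ and $k=3$. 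This unifies all values of $k$ at once, avoids both the AM--GM case and the separate $x_1=1$ analysis, and terminates after one jump rather than an indefinite descent. The only point to make explicit is that existence of a solution already forces $k\geq 1$ (since $kx_1x_2=x_1^2+x_2^2+1\geq 3$), which is what guarantees $x_2+x_2'=kx_1>0$ and hence positivity of $x_2'$; with that observed, the argument is complete and, if anything, cleaner than the paper's.
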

\begin{proof} 
		It is direct that when $k=3$, the \Cref{reduced Markov} has positive integer solutions. Moreover, by \cite[Lemma 3.9]{CL24}, all the solutions can be obtained by the initial solution $(1,1)$ through finite cluster mutations. Hence, we only need to prove that when $k\neq 3$, there is no positive integer solution. There are two cases for the values of $k$ to be discussed as follows.
		\begin{enumerate}[leftmargin=2em]
			\item If $k=1,2$, since $x_1^2+x_2^2\geq 2x_1x_2$, it is direct that $x_1^2+x_2^2+1>kx_1x_2$. Hence, there is no positive integer solution.  
			\item If $k\geq 4$, we assume that $(a,b)$ is a solution. If $a=1$, we have $b^2-kb+2=0$. Hence, by the quadratic formula, $\Delta=k^2-8$ must be a square number, denoted by $h^2\ (h\in \mbN_+)$. Then, we get 
			\begin{align}
				(k+h)(k-h)=8.\notag
			\end{align} Note that both $k$ and $h$ are positive integers. Then, we have 
			\begin{align}
				\left\{
		\begin{array}{ll}
			k+h=4  \\
			k-h=2
		\end{array} \right. \text{or}
		\left\{\begin{array}{ll}
			k+h=8  \\
			k-h=1
		\end{array} \right. .\notag
			\end{align} which implies that $k=3,h=1$ or $k=\frac{9}{2},h=\frac{7}{2}$. However, both of them contradict with the initial conditions. Hence, we conclude that $a\geq  2$ and $b\geq 2$. Without loss of generality, we might assume that $a\geq b\geq 2$. Let $f(\lambda)=\lambda^2-kb\lambda+b^2+1$ and  \begin{align}(a^{\prime},b)=\mu_1(a,b)=\left(\frac{b^2+1}{a},b\right).\notag 
			\end{align} We observe that $a$ and $a^{\prime}$ are the two zeros of $f(\lambda)$ and \begin{align}
				f(b)=b^2-kb^2+b^2+1=1-(k-2)b^2<0.\notag
			\end{align} It implies that $a^{\prime}<b<a$. Similarly, let $(a^{\prime},b^{\prime})=\mu_2(a^{\prime},b)$ and we have $b^{\prime}<a^{\prime}$. By repeating the cluster mutations as above, there must be a solution $(a_0,b_0)=(\mu_i\dots\mu_2\mu_1)(a,b)$ with $i\in \{1,2\}$, such that $\min(a_0,b_0)<2$. However, it contradicts with the fact that $a_0\geq b_0\geq 2$. Hence, there is no positive integer solution for $k\geq 4$. 
			\end{enumerate} 
\end{proof}
In \cite{Pro20}, Propp first investigated the relation between the Markov Diophantine equations and the once-punctured torus cluster algebra. Take the initial exchange matrix $B_{0}$  and the once-punctured torus cluster algebra is $\mcA_{P}=\mcA(B_{0})$, where\begin{align}
	B_{0}=\begin{pmatrix}0 & 2 & -2\\ -2 & 0 & 2\\ 2 & -2 & 0\end{pmatrix}.\label{2-matrix}
\end{align} Note that $B_0$ is sign-equivalent. Then, the cluster mutation rules are: $$\begin{array}{c}
	\mu_{1}(x_1,x_2,x_3)=\left(\dfrac{x_2^2+x_3^2}{x_1},x_2,x_3\right),\notag\\ \mu_{2}(x_1,x_2,x_3)=\left(x_1,\dfrac{x_1^2+x_3^2}{x_2},x_3\right),\notag\\ \mu_{3}(x_1,x_2,x_3)=\left(x_1,x_2,\dfrac{x_1^2+x_2^2}{x_3}\right).\end{array}$$ Afterwards, Lampe \cite{Lam16} exhibited a Laurent mutation invariant of $\mcA_{P}$ as follows:
\begin{align}
	\mcT_1(x_1,x_2,x_3)=\dfrac{x_1^2+x_2^2+x_3^2}{x_1x_2x_3}, \label{Lampe mutation invariant}
\end{align} that is $\mathcal{T}_1( \mu_{i}(x_1,x_2,x_3))=\mathcal{T}_1(x_1,x_2,x_3)$, for
		$i= 1,2,3$. Here, we call \eqref{Lampe mutation invariant} \emph{the Markov mutation invariant}.
		
Now, we exhibit a different proof of the Aigner's results given by \cite[Proposition 2.2]{Aig13} by use of cluster mutations.
\begin{theorem}[Aigner]\label{Aigner's theorem}
	Let $k\in \mbN$ and the Diophantine equation be as follows:
	\begin{align}
		x_1^2+x_2^2+x_3^2=kx_1x_2x_3. \label{Markov's equation}
	\end{align}
	Then, it has positive integer solutions if and only if $k=1$ or $k=3$.
\end{theorem}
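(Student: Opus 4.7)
The plan is to mimic the Vieta descent used in rank $2$ in \Cref{reduced Aigner's theorem}, but now carried out with the three cluster mutations of the once-punctured torus algebra $\mcA_{P}$. The decisive preliminary observations are that (i) $\mcT_{1}$ is a mutation invariant of $\mcA_{P}$, so the $\mu_{i}$-orbit of any positive integer solution of \eqref{Markov's equation} consists entirely of positive integer solutions, and (ii) for a sorted triple $a \geq b \geq c \geq 1$ the first mutation yields $\mu_{1}(a,b,c) = (a',b,c)$ with
$$a' \;=\; \frac{b^{2}+c^{2}}{a} \;=\; kbc - a \;\in\; \mbN_{+},$$
since $a$ automatically divides $b^{2}+c^{2} = a(kbc - a)$.

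The ``if'' direction is immediate by direct substitution: $(3,3,3)$ works for $k = 1$ and $(1,1,1)$ works for $k = 3$. For the converse, I would take an arbitrary positive integer solution, sort it as above, and iterate $\mu_{1}$ with re-sorting whenever the new first coordinate $a'$ is strictly smaller than the previous maximum $a$. Because the entries remain in $\mbN_{+}$ and the maximum strictly decreases at each non-trivial step, this descent terminates in finitely many steps at a \emph{minimal} sorted triple $(a,b,c)$ at which $a' \geq a$.

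At this minimal triple the Vieta identities $a + a' = kbc$ and $aa' = b^{2}+c^{2}$ give first $a^{2} \leq aa' \leq 2b^{2}$, so $b \leq a \leq b\sqrt{2}$; and second, evaluating the quadratic $f(\lambda) = \lambda^{2} - kbc\,\lambda + (b^{2}+c^{2})$ at $\lambda = b$ (which lies below both of its roots since $b \leq a \leq a'$) yields the key inequality
$$f(b) \;=\; 2b^{2} + c^{2} - kb^{2}c \;\geq\; 0,$$
equivalently $k \leq \tfrac{2}{c} + \tfrac{c}{b^{2}} \leq \tfrac{3}{c}$, i.e.\ $kc \leq 3$.

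A short case analysis on $(k,c)$ now finishes the proof. For $k \geq 4$ no positive integer $c$ satisfies $kc \leq 3$, ruling out the existence of any minimal triple. For $k \in \{2,3\}$ we must have $c = 1$, and the equation $a^{2} + b^{2} + 1 = kab$ becomes $(a-b)^{2} = -1$ when $k = 2$ (impossible) and admits the minimal solution $(a,b) = (1,1)$ when $k = 3$. For $k = 1$ we get $c \in \{1,2,3\}$; the cases $c \in \{1,2\}$ reduce to $(a - b)^{2} = -c^{2}$, which is impossible, while $c = 3$ admits the minimal solution $a = b = 3$. The main technical obstacle is simply ensuring that the descent truly terminates at the ``correct'' minimal configuration, which is exactly what the inequality $f(b) \geq 0$ is designed to guarantee.
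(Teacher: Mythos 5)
Your proof is correct and rests on the same engine as the paper's --- Vieta jumping realized through the cluster mutations of $\mcA_P$ --- but it is organized in the opposite direction. The paper first forces $\min(a,b,c)\geq 2$ for $k=2$ and $k\geq 4$ (the latter step invoking \Cref{reduced Aigner's theorem} to exclude a coordinate equal to $1$), and then shows $f(b)<0$ at \emph{every} sorted solution, so the descent can never terminate and one contradicts well-ordering. You instead let the descent run until it stops at a minimal triple and read off the terminal inequality $f(b)\geq 0$, i.e. $kc\leq 3$, which immediately kills $k\geq 4$ and reduces $k\in\{1,2,3\}$ to a two-variable check with $c\leq 3$. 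Your arrangement is slightly more economical: it needs no separate rank-$2$ lemma as a prerequisite, and it recovers $(1,1,1)$ and $(3,3,3)$ as the minimal triples for $k=3$ and $k=1$ along the way.

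Two small repairs are needed. First, termination: when the sorted triple has a tie $a=b$, replacing $a$ by $a'<a$ does not strictly decrease the maximum; track the sum $a+b+c$ (or the sorted tuple lexicographically) instead, which does strictly decrease at each nontrivial step. Second, in the case $k=1$, $c=1$, the equation $a^2+b^2+1=ab$ does not reduce to $(a-b)^2=-1$ but to $(a-b)^2=-ab-1$; the impossibility is of course unaffected (already $a^2+b^2\geq 2ab>ab$ rules it out), but the identity $(a-b)^2=-c^2$ as you state it only holds for $c=2$.
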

\begin{proof}
	It is direct that when $k=1$ or $k=3$, the \Cref{Markov's equation} has positive integer solutions. For example, $(3,3,3)$ and $(1,1,1)$ are solutions for $k=1$ and $k=3$ respectively. Furthermore, by \cite[Theorem 2.3]{Lam16}, all the solutions for $k=3$ can be generated by the initial solution $(1,1,1)$ through finite cluster mutations $\mu_i\ (i=1,2,3)$.
	
	Hence, we only need to prove that when $k\neq 1$ and $k\neq 3$, there is no positive integer solution. There are two cases for the values of $k$ to be discussed as follows.
\begin{enumerate}[leftmargin=2em]
\item If $k=2$, we assume that $(a,b,c)$ is a solution. We claim that $\min(a,b,c)\geq 2$. Otherwise, we might assume that $a=1$. Then, we get \begin{align}
	b^2+c^2+1=2bc,\notag
\end{align} which contradicts with $b^2+c^2\geq 2bc$. Hence, without loss of generality, we assume that $a\geq b\geq c\geq 2$. Let $f(\lambda)=\lambda^2-2bc\lambda+b^2+c^2$ and \begin{align}
	(a^{\prime},b,c)=\mu_{1}(a,b,c)=\left(\dfrac{b^2+c^2}{a},b,c\right).\notag
\end{align} Then, it implies that $a$ and $a^{\prime}$ are the two zeros of $f(\lambda)$ and $a^{\prime}=2bc-a\in \mbN_+$. Note that \begin{align}
	f(b)=2b^2+c^2-2b^2c=b^2(2-c)+c(c-b^2)<0.\notag
\end{align} Hence, we conclude that $a>b>a^{\prime}$, which implies that $\max(a,b,c)>\max(a^{\prime},b,c)$.  
If $b\geq c\geq a^{\prime}$, take \begin{align}
	(a^{\prime},b^{\prime},c)=\mu_{2}(a^{\prime},b,c)=\left(a^{\prime},\frac{{a^{\prime}}^2+c^2}{b},c\right).\notag
\end{align}
Similarly, we get $b>c>b^{\prime}$, which implies that $\max(a^{\prime},b,c)> \max(a^{\prime},b^{\prime},c)$.
If $b> a^{\prime}\geq c$, take \begin{align}
	(a^{\prime},b^{\prime},c)=\mu_{2}(a^{\prime},b,c)=\left(a^{\prime},\frac{{a^{\prime}}^2+c^2}{b},c\right).\notag
\end{align} Then, we have $b>a^{\prime}>b^{\prime}$, which implies that $\max(a^{\prime},b,c)> \max(a^{\prime},b^{\prime},c)$.
%Take 
%\begin{align}
%	(a^{\prime},b^{\prime},c)=\mu_{2}(a^{\prime},b,c)=\left(a^{\prime},\frac{{a^{\prime}}^2+c^2}{b},c\right).\notag
%\end{align} Similarly, we get $b>a^{\prime}>b^{\prime}$, which implies that $\max(a^{\prime},b,c)\geq \max(a^{\prime},b^{\prime},c)$. 
%If $c=\max(a^{\prime},b^{\prime},c)$, then let $(a^{\prime},b^{\prime},c^{\prime})=\mu_3(a^{\prime},b^{\prime},c)$ and we get \begin{align}\max(a^{\prime},b^{\prime},c)>\max(a^{\prime},b^{\prime},c^{\prime}).	\notag
 %\end{align}
 %If $c\neq \max(a^{\prime},b^{\prime},c)$, then let $(a^{\prime\prime},b^{\prime},c)=\mu_1(a^{\prime},b^{\prime},c)$ and we get \begin{align}\max(a^{\prime},b^{\prime},c)> \max(a^{\prime\prime},b^{\prime},c).\notag\end{align}
 By repeating the process above, that is cluster-mutating a solution at the direction where the number is maximal, we get a sequence of solutions whose maximal numbers are strictly decreasing. However, it will end up with a solution $(a_0,b_0,c_0)$, such that 
\begin{align} 
\min(a_0,b_0,c_0)<2.\notag
\end{align} Hence, it contradicts with the fact that $\min(a_0,b_0,c_0)\geq 2$.
\item If $k\geq 4$, we assume that $(a,b,c)$ is a solution. We claim that $\min(a,b,c)\geq 2$. Otherwise, we might assume that $a=1$. Then we get \begin{align}
	b^2+c^2+1=kbc,\notag
\end{align} which contradicts with \Cref{reduced Aigner's theorem}. Hence, without loss of generality, we assume that $a\geq b\geq c\geq 2$. Let $g(\lambda)=\lambda^2-kbc\lambda+b^2+c^2$ and \begin{align}
	(a^{\prime},b,c)=\mu_{1}(a,b,c)=\left(\dfrac{b^2+c^2}{a},b,c\right).\notag
\end{align} Then, we have $a$ and $a^{\prime}$ are the two zeros of $g(\lambda)$ and $a^{\prime}=kbc-a\in \mbN_+$. Since 
\begin{align}
	g(b)=2b^2+c^2-kb^2c=2b^2(1-c)+c(c-b^2)-(k-3)b^2c<0,\notag
\end{align} we conclude that $b$ must lie between $a$ and $a^{\prime}$, that is $a>b>a^{\prime}$. Then, we get \begin{align}
	\max(a,b,c)>\max(a^{\prime},b,c). \notag
\end{align} Similar to the process of $(1)$, we can always mutate a solution at the direction where the number is maximal and get a sequence of solutions whose maximal numbers are strictly decreasing. However, it will end up with a solution $(a_0,b_0,c_0)$, such that 
\begin{align} 
\min(a_0,b_0,c_0)<2,\notag
\end{align} which is a contradiction. 
\end{enumerate}
Consequently, by cases $(1)$ and $(2)$, the theorem holds.
\end{proof}
\begin{remark} When $k=3$, all the positive integer solutions to the \Cref{Markov's equation} are called the \emph{Markov triples} and the \Cref{Markov's equation} is called the \emph{Markov equation}. In addition, for $k=1$, the positive integer triple $(a_1,a_2,a_3)$ is a solution if and only if $a_i\ (i=1,2,3)$ is divisible by $3$ and $(\frac{a_1}{3},\frac{a_2}{3},\frac{a_3}{3})$ is a solution to the Markov equation.
	However, to prove \Cref{Aigner's theorem}, Aigner \cite[Proposition 2.2]{Aig13} used the fact that the numbers in a Markov triple are pairwise relatively prime, which is different from the method of cluster mutations above.
\end{remark}
\subsection{Positive integer points for the variant of Markov mutation invariant}\

In this subsection, we will provide an analogue of Aigner's results. We aim to find all the positive integer points of \eqref{Lampe mutation invariant 1}, which is called \emph{the variant of Markov mutation invariant}. Beforehand, we need several preliminary and necessary lemmas. Note that the following lemma is direct.
\begin{lemma}\label{modulo 3}
	Let $x\in \mbN$. Then, the following properties hold: \begin{enumerate}
		\item Either $x^2\equiv0\, (\mod3)$ or $x^2\equiv1\, (\mod3)$;
		\item Either $x^2\equiv0\, (\mod4)$ or $x^2\equiv1\, (\mod4)$;
		\item Either $x^2\equiv0\, (\mod5)$ or $x^2\equiv1\, (\mod5)$ or $x^2\equiv4\, (\mod5)$;
		\item Either $x^2\equiv0\, (\mod6)$ or $x^2\equiv1\, (\mod6)$ or $x^2\equiv3\, (\mod6)$ or $x^2\equiv4\, (\mod6)$.
	\end{enumerate}
\end{lemma}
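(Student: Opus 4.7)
The plan is a direct case check: for each modulus $m \in \{3,4,5,6\}$, I enumerate the residue classes of $x$ modulo $m$, square each representative, and reduce modulo $m$. Since squaring is well-defined on $\mbZ/m\mbZ$, the finite list of values obtained this way is exactly the list of possible residues of $x^2$ modulo $m$, so the statements (1)--(4) reduce to mechanical verification.

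Concretely, for (1) I write $x \equiv 0, 1, 2 \pmod 3$ and compute $0^2, 1^2, 2^2 \equiv 0, 1, 1 \pmod 3$, which gives exactly the two claimed residues. For (2) the four classes modulo $4$ give squares $0,1,0,1$, producing $\{0,1\}$. For (3) the five classes modulo $5$ give squares $0,1,4,4,1$, producing $\{0,1,4\}$. For (4) the six classes modulo $6$ give squares $0,1,4,3,4,1$, producing $\{0,1,3,4\}$.

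There is no real obstacle here; each part is a single line of arithmetic. The only thing to be careful about is presenting the four case checks in a uniform and compact way (for instance, as a short displayed list or a small table), so that the reader can verify the computation at a glance. I would simply write out the residues $\{x^2 \bmod m : x \in \{0,1,\dots,m-1\}\}$ for $m=3,4,5,6$ and conclude each part directly.
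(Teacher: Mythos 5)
Your case check is correct and is exactly the verification the paper intends: the paper simply states the lemma as ``direct'' without a written proof, and enumerating $\{x^2 \bmod m : x \in \{0,1,\dots,m-1\}\}$ for $m=3,4,5,6$ is the standard one-line argument. All four computed residue sets match the claimed ones, so there is nothing to add.
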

%\begin{proof}
%	Note that there are three possible cases for the choices of $x$.
%	 \begin{enumerate}[leftmargin=2em]
%		\item If $x\equiv0 (\mod3)$, then we get $x^2\equiv0 (\mod3)$.
%		\item If $x\equiv1 (\mod3)$, then we get $x^2\equiv1 (\mod3)$.
%		\item If $x\equiv2 (\mod3)$, then we get $x^2\equiv4\equiv1 (\mod3)$.
%	\end{enumerate}
%	Hence, the lemma holds for any $x\in \mbN$.
%\end{proof}
Recall that the cluster mutation rules for the cluster algebra $\mcA(1,-4)$ are $$\begin{array}{c}\mu_{1}(x_1,x_2)=\left(\dfrac{x_2^4+1}{x_1},x_2\right),\ \mu_{2}(x_1,x_2)=\left(x_1,\dfrac{x_1+1}{x_2}\right).
	\end{array}$$ It was proved in \cite[Lemma 2.26]{CL24} that there is a Laurent mutation invariant of $\mcA(1,-4)$ as follows:
	\begin{align}
\mcT_2(x_1,x_2)=\dfrac{x_2^4+x_1^2+2x_1+1}{x_1x_2^2},\notag
	\end{align} that is $\mathcal{T}_2( \mu_{i}(x_1,x_2))=\mathcal{T}_2(x_1,x_2)$, for 
		$i= 1,2$. In addition, the cluster mutation $\mu_i\ (\text{for}\ i=1,2)$ keeps the integrality of any positive integer solution $(a,b)$ to \begin{align}x_2^4+x_1^2+2x_1+1=kx_1x_2^2,\label{14}\end{align} where $k\in \mbN$.
		
\begin{lemma}\label{lemma for reduced}
	Assume that  $(a,b)$ is a positive integer solution to the \Cref{14}, where $k\geq 6$, $a\neq 1$ and $b\neq 1$.
	\begin{enumerate}[leftmargin=2em]
		\item Let $(a^{\prime},b)=\mu_1(a,b)$. If $a>b^2$, then $a>b^2>a^{\prime}$; if $a<b^2$, then $a^{\prime}>b^2>a$.
		\item Let $(a,b^{\prime})=\mu_2(a,b)$. If $a>b^2$, then ${b^{\prime}}^2>a>b^2$; if $a<b^2$, then $b^2>a>{b^{\prime}}^2$.
	\end{enumerate}
\end{lemma}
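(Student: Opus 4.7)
The plan is to rewrite \eqref{14} in the compact form $(x_1+1)^2 + x_2^4 = k x_1 x_2^2$ and then handle the two mutations separately using the identities $a \cdot a' = b^4 + 1$ and $b \cdot b' = a+1$ that come from the mutation rules. A short manipulation---comparing the equation at $(a,b)$ and at $(a, b')$, or equivalently observing $b^2 {b'}^2 = (a+1)^2 = b^2(ka - b^2)$---yields the key identity ${b'}^2 = ka - b^2$, which will be the workhorse for part (2).

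For part (1), I first note that $a = b^2$ forces $b^2 \mid 1$ (from $a \cdot a' = b^4+1$), contradicting $b \neq 1$, so $a \neq b^2$ always. Polynomial division then gives
$$\frac{b^4+1}{b^2+1} = (b^2-1) + \frac{2}{b^2+1}, \qquad \frac{b^4+1}{b^2-1} = (b^2+1) + \frac{2}{b^2-1},$$
both lying in a short interval around $b^2$ for $b \geq 2$. Hence $a \geq b^2 + 1$ forces $a' \leq \frac{b^4+1}{b^2+1} < b^2$, and $a \leq b^2 - 1$ forces $a' \geq \frac{b^4+1}{b^2-1} > b^2$, which is exactly the claim.

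For part (2), the case $a > b^2$ is immediate: the desired ${b'}^2 > a$ is equivalent to $(k-1)a > b^2$, and under $k \geq 6$ with $a > b^2$ we get $(k-1)a \geq 5a > 5 b^2 > b^2$. The case $a < b^2$ is more delicate: the inequality ${b'}^2 < a$ unwinds (using ${b'}^2 = (a+1)^2/b^2$) to $(a+1)^2 < a b^2$, i.e., $b^2 > a + 2 + \frac{1}{a}$, which for $a \geq 2$ amounts to $b^2 \geq a + 3$. So the task reduces to ruling out $b^2 = a+1$ and $b^2 = a+2$ under the hypothesis $k \geq 6$. Substituting $b^2 = a+1$ into the equation yields $k = 2 + 2/a \leq 3$; substituting $b^2 = a+2$ yields $(k-2)a^2 + (2k-6)a - 5 = 0$, whose left-hand side is at least $16 + 12 - 5 = 23 > 0$ for $a \geq 2$ and $k \geq 6$. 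Both possibilities contradict $k \geq 6$, so $b^2 \geq a + 3$ and the inequality ${b'}^2 < a$ follows.

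The main obstacle is this last step: the trivial bound $b^2 \geq a+1$ is not enough, one genuinely needs the gap $b^2 - a \geq 3$, and the hypothesis $k \geq 6$ enters essentially through the two boundary cases. Everything else is a short algebraic manipulation from the rewritten equation.
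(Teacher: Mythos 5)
Your proof is correct. Part (1) is essentially the paper's argument in a slightly more explicit form: the paper shows $a>b^2\Rightarrow a'<b^2$ by contradiction from $aa'=b^4+1\geq (b^2+1)b^2$, whereas you read the same bounds off the polynomial divisions $\frac{b^4+1}{b^2\pm 1}$; these are interchangeable. The real divergence is in part (2) for $a<b^2$, which is the delicate case. The paper first proves $b'\leq b$, rules out $b'=b$ by computing the discriminant of $\lambda^2-ka\lambda+(a+1)^2$, deduces $a\geq b'^2$ from $(a+1)^2=b^2b'^2>b'^4$, and finally excludes $a=b'^2$ via the identity $(k-1)a^2=(a+1)^2$, which forces $k=5$, $a=1$. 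You instead observe that $b'^2<a$ unwinds to the integer gap condition $b^2\geq a+3$, and exclude the two boundary gaps $b^2=a+1$ (which forces $k=2+2/a\leq 3$) and $b^2=a+2$ (where $(k-2)a^2+(2k-6)a-5=0$ has no solution with $a\geq 2$, $k\geq 6$) by direct substitution into the equation. Both routes use the hypothesis $k\geq 6$ only to kill these borderline configurations; your reduction to the gap $b^2-a\geq 3$ is arguably more transparent and avoids the separate step of proving $b'<b$. Your Vieta identity $b'^2=ka-b^2$ also gives a one-line treatment of the case $a>b^2$ (the paper's version, $b'^2>(a+1)^2/a>a$, happens not to need $k\geq 6$ there, but both are fine). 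One cosmetic remark: your aside that $a=b^2$ forces $b^2\mid 1$ implicitly uses $a'\in\mbZ$ (equivalently $a\mid b^4+1$, which follows from $a+a'=kb^2-2$); this is harmless and in any case unnecessary, since the lemma only addresses the cases $a>b^2$ and $a<b^2$.
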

\begin{proof}
	Firstly,  we focus on the case that $(a^{\prime},b)=\mu_{1}(a,b)$, which implies that $aa^{\prime}=b^4+1$. If $a>b^2$, we claim that $a^{\prime}<b^2$. Otherwise, we obtain that $a^{\prime}\geq b^2>1$, which implies that \begin{align}aa^{\prime}\geq (b^2+1)b^2=b^4+b^2>b^4+1.\notag\end{align} Then, it is a contradiction and we have $a>b^2>a^{\prime}$. If $a<b^2$, then we get \begin{align}
		a^{\prime}=\dfrac{b^4+1}{a}>\dfrac{b^4+1}{b^2}>b^2,\notag
	\end{align} which implies that $a^{\prime}>b^2>a$.
	
	Now, we consider the second case that $(a,b^{\prime})=\mu_{2}(a,b)$, which implies that $bb^{\prime}=a+1$. If $a>b^2$, we have \begin{align}{b^{\prime}}^2a>{b^{\prime}}^2b^2=(a+1)^2.\notag\end{align}
Hence, we conclude that 
\begin{align}
	{b^{\prime}}^2>\dfrac{(a+1)^2}{a}>a>b^2.\notag
\end{align} If $a<b^2$, then we have \begin{align}
	b^{\prime}=\dfrac{a+1}{b}<\dfrac{b^2+1}{b}=b+\dfrac{1}{b},\notag
\end{align} which implies that  $b^{\prime} \leq b$. Now, we claim that $b^{\prime}< b$. Otherwise, assume that $b^{\prime}=b$ and we have $b^2$ and ${b^{\prime}}^2$ are the two common zeros of the polynomial 
\begin{align}
	f(\lambda)=\lambda^2-ka\lambda+(a+1)^2.\notag
\end{align} Then, we get
\begin{align}
	\Delta=k^2a^2-4(a+1)^2=(k^2-4)a^2-8a-4=0,\notag
\end{align} which implies that \begin{align}
	a_1=\frac{2}{k-2},\ a_2=-\frac{2}{k+2}.\notag
\end{align} Since $k\geq 6$, it implies that neither $a_1$ nor $a_2$ is a positive integer, which is a contradiction. Hence, by the Vieta's formula, we obtain that \begin{align}
	(a+1)^2=b^2{b^{\prime}}^2>{b^{\prime}}^4,\notag
\end{align} which implies that $a\geq {b^{\prime}}^2$. If ${b^{\prime}}^2= a$, then we have \begin{align}
	b^2=ka-{b^{\prime}}^2=(k-1)a. \notag
\end{align} Therefore, we obtain that 
\begin{align}
	b^2{b^{\prime}}^2=(k-1)a^2=(a+1)^2,\notag
\end{align} which implies that $k=5$ and $a=1$. However, it contradicts with $k\geq 6$ and $a\neq 1$. Hence, we have $b^2>a>{b^{\prime}}^2$.
\end{proof}
\begin{lemma}\label{reduced analogue}
	Let $k\in \mbN$ and the Diophantine equation be as follows:
	\begin{align}
	x_2^4+x_1^2+2x_1+1=kx_1x_2^2. \label{Chen's equation}
	\end{align}
	Then, it has positive integer solutions if and only if $k=5$.
\end{lemma}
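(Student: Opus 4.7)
My plan mirrors the proof of Lemma \ref{reduced Aigner's theorem}. Sufficiency is clear: $(x_1,x_2) = (1,1)$ satisfies the equation with $k=5$, and its cluster-mutation orbit under $\mcA(1,-4)$ produces further positive integer solutions by the mutation invariance of $\mcT_2$. For necessity I would split on the size of $k$: the values $k \leq 4$ are ruled out by a discriminant/quadratic-residue obstruction, while $k \geq 6$ is handled by Vieta-style descent powered by Lemma \ref{lemma for reduced}.

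For small $k$, view the equation as a quadratic in $x_1$; its discriminant equals $x_2^2\bigl((k^2-4)x_2^2 - 4k\bigr)$. For $k \in \{1,2\}$ the factor in parentheses is non-positive for every positive $x_2$, precluding real solutions. For $k = 3$, integrality of $x_1$ demands that $5x_2^2 - 12$ be a perfect square, which fails modulo $5$ since $3$ is not a quadratic residue there. For $k = 4$ it demands $3x_2^2 - 4$ be a perfect square, which fails modulo $3$. Thus $k \in \{1,2,3,4\}$ admits no positive integer solution.

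For $k \geq 6$, I first kill the base cases. Setting $x_2 = 1$ reduces the equation to $a^2 + (2-k)a + 2 = 0$ with discriminant $(k-2)^2 - 8$, and setting $x_1 = 1$ reduces it to $b^4 - kb^2 + 4 = 0$ with discriminant $k^2 - 16$; the factorizations $(k-2-n)(k-2+n) = 8$ and $(k-m)(k+m) = 16$ single out $k = 5$ as the only integer $k \geq 3$ giving a perfect-square discriminant with positive integer roots, so no base-case solution exists for $k \geq 6$. For a general solution $(a,b)$ with $a, b \geq 2$, I would iteratively invoke Lemma \ref{lemma for reduced} to drive descent on $M(a,b) := \max(a, b^2)$: when $a > b^2$ the mutation $\mu_1$ sends $(a,b) \mapsto (a',b)$ with $a' < b^2 < a$ and thus strictly decreases $M$, and when $a < b^2$ the mutation $\mu_2$ sends $(a,b) \mapsto (a,b')$ with ${b'}^2 < a < b^2$ and again strictly decreases $M$. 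The borderline $a = b^2$ cannot occur with $b \geq 2$, since plugging it into the equation forces $(k-2)b^4 = 2b^2 + 1$, which has no integer solution for $k \geq 6$ and $b \geq 2$. Because $M$ takes values in $\mbN_+$, finitely many such mutations bring us to a solution with $x_1 = 1$ or $x_2 = 1$, contradicting the base-case analysis.

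The main technical point to verify along the way is that each mutation sends a positive integer solution to a positive integer solution, so that the descent stays in $\mbN_+^2$. This follows from rewriting the equation as $b^4 + 1 = a(kb^2 - a - 2)$ and $(a+1)^2 = b^2(ka - b^2)$, which yield $a \mid b^4 + 1$ and $b \mid a+1$, so that $\mu_1(a,b) = ((b^4+1)/a, b)$ and $\mu_2(a,b) = (a, (a+1)/b)$ remain in $\mbN_+^2$. The genuine heavy lifting is already packaged inside Lemma \ref{lemma for reduced}, which pins down the monotonicity of $M$ under a single mutation; granted that lemma, the descent step is essentially automatic and the main obstacle is only the bookkeeping of the two cases for $\mu_1$ versus $\mu_2$.
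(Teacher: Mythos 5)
Your proposal is correct and follows essentially the same route as the paper: discriminant/congruence obstructions for $k\leq 4$, and for $k\geq 6$ a Vieta-style descent via \Cref{lemma for reduced} down to the base cases $x_1=1$ or $x_2=1$, which are then excluded by the factorizations $(k-m)(k+m)=16$ and $(k-2-n)(k-2+n)=8$. The only cosmetic difference is that for $k=3,4$ you treat the equation as a quadratic in $x_1$, yielding the slightly cleaner obstructions $5x_2^2-12\equiv 3\pmod 5$ and $3x_2^2-4\equiv 2\pmod 3$, whereas the paper works with the quadratic in $x_2^2$ and a substitution; both are valid.
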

\begin{proof} Firstly, it is direct that when $k=5$, the \Cref{reduced Markov} has positive integer solutions. Moreover, by \cite[Lemma 3.12]{CL24}, all the solutions can be obtained by the initial solution $(1,1)$ through finite cluster mutations. Hence, we only need to prove that when $k\neq 5$, there is no positive integer solution. Several cases need to be discussed as follows.
\begin{enumerate}[leftmargin=2em]
	\item If $k=1$, then $x_2^4-x_1x_2^2+(x_1+1)^2=0$. Regarding $x_1$ as the coefficients, we have 
	\begin{align}
		\Delta=x_1^2-4(x_1+1)^2<0,\notag
	\end{align} which means that there is no positive integer solution.
	\item If $k=2$, then $x_2^4-2x_1x_2^2+(x_1+1)^2=0$. Regarding $x_1$ as the coefficients, we have 
	\begin{align}
		\Delta=4x_1^2-4(x_1+1)^2<0,\notag
	\end{align} which means that there is no positive integer solution.
	\item If $k=3$, then $x_2^4-3x_1x_2^2+(x_1+1)^2=0$. Assume that $x_2^{(1)}$ and $x_2^{(2)}$ are the two positive integer solutions. By the Vieta's formula, we have \begin{align}
		\left\{
		\begin{array}{ll}
			\big(x_2^{(1)}\big)^2+\big(x_2^{(2)}\big)^2=3x_1 \\
\big(x_2^{(1)}\big)^2\cdot\big(x_2^{(2)}\big)^2=\big(x_1+1\big)^2
		\end{array} \right. .\notag
	\end{align} Hence, we get $(x_2^{(1)}-x_2^{(2)})^2=x_1-2$ and denote by $x_1=s^2+2$, where $s=|x_2^{(1)}-x_2^{(2)}|\in \mbN$. Note that \begin{align}
		\Delta&=9x_1^2-4(x_1+1)^2\notag \\\notag &=5x_1^2-8x_1-4\\&=s^2(5s^2+12)\notag
	\end{align} is a square number, which implies that $5s^2+12$ is also a square number. Assume that there is a non-negative integer $h$ such that 
	\begin{align}
		5s^2+12=h^2.\notag
	\end{align} By \Cref{modulo 3} $(1)$, we have $s^2\equiv0\,(\mod 3)$ and $h^2\equiv0\,(\mod 3)$, which implies that $s\equiv0\,(\mod 3)$ and $h\equiv0\,(\mod 3)$. However, note that \begin{align}
		5s^2+12\equiv h^2\equiv 0\,(\mod9)\notag
	\end{align} and it contradicts with $5s^2+12\nequiv 0\,(\mod9)$. Therefore, there is no positive integer solution for $k=3$.
	\item If $k=4$, then $x_2^4-4x_1x_2^2+(x_1+1)^2=0$. Assume that $x_2^{(1)}$ and $x_2^{(2)}$ are the two positive integer solutions. By the Vieta's formula, we have \begin{align}
		\left\{
		\begin{array}{ll}
			\big(x_2^{(1)}\big)^2+\big(x_2^{(2)}\big)^2=4x_1  \\
			\big(x_2^{(1)}\big)^2\cdot\big(x_2^{(2)}\big)^2=\big(x_1+1\big)^2
		\end{array} \right. .\notag
	\end{align} Hence, we get $(x_2^{(1)}-x_2^{(2)})^2=2(x_1-1)$ is a square number. Assume that $x_1=2s^2+1$ and we get 
	\begin{align}
		\Delta&=16x_1^2-4(x_1+1)^2\notag \\&=4(3x_1^2-2x_1-1)\notag \\&=16s^2(3s^2+2).\notag
	\end{align} It implies that $3s^2+2$ is a square number. Hence, we assume that there is a positive integer $h$, such that
	\begin{align}
		3s^2+2=h^2.\notag
	\end{align}
	However, by \Cref{modulo 3} $(1)$, it is a contradiction. Then, there is no positive integer solution for $k=4$.
	\item If $k\geq 6$, then $x_2^4-kx_1x_2^2+(x_1+1)^2=0$. Assume that $(a,b)$ is a positive integer solution. Then, we claim that $a\neq b^2$. Otherwise, we have 
	\begin{align}
		(k-2)a^2-2a-1=0,\notag
	\end{align} which implies that \begin{align}a=\dfrac{1\pm\sqrt{k-1}}{k-2}<1.\notag\end{align} Hence, it is a contradiction. By \Cref{lemma for reduced}, after finite cluster mutations, we must get a solution $(a_0,b_0)$, such that $a_0=1$ or $b_0=1$. 
	\begin{itemize}[leftmargin=1em]\itemsep=0pt	
\item If $a_0=1$, then $b_{0}^4-kb_{0}^2+4=0$. Then, it implies that $\Delta=k^2-16$ must be a square number. Assume that $k^2-16=m^2\ (m\in \mbN)$. Then, we have $(k-m)(k+m)=16$, which implies that \begin{align}
				\left\{
		\begin{array}{ll}
			k-m=1  \\
			k+m=16
		\end{array} \right. \text{or}\,
		\left\{\begin{array}{ll}
			k-m=2  \\
			k+m=8
		\end{array} \right. \text{or}\,
		\left\{\begin{array}{ll}
			k-m=4  \\
			k+m=4
		\end{array} \right. .\notag
			\end{align}
	Moreover, we have 
			\begin{align}
				\left\{
		\begin{array}{ll}
			k=\frac{17}{2}  \\
			m=\frac{15}{2}
		\end{array} \right. \text{or}\,
		\left\{\begin{array}{ll}
			k=5  \\
			m=3
		\end{array} \right. \text{or}\,
		\left\{\begin{array}{ll}
			k=4  \\
			m=0
		\end{array} \right. .\notag
			\end{align} which contradicts with $k\geq 6$.
		\item If $b_0=1$, then $a_{0}^2+(2-k)a_{0}+2=0$. Then, it implies that $\Delta=k^2-4k-4$ must be a square number. Assume that $k^2-4k-4=n^2\ (n\in \mbN)$. Then, we have $(k-2-n)(k-2+n)=8$, which implies that \begin{align}
				\left\{
		\begin{array}{ll}
			k-2-n=1  \\
			k-2+n=8
		\end{array} \right. \text{or}\,
		\left\{\begin{array}{ll}
			k-2-n=2  \\
			k-2+n=4
		\end{array} \right. .\notag
			\end{align}
	Moreover, we have 
			\begin{align}
				\left\{
		\begin{array}{ll}
			k=\frac{13}{2}  \\
			n=\frac{7}{2}
		\end{array} \right. \text{or}\,
		\left\{\begin{array}{ll}
			k=5  \\
			n=1
		\end{array} \right. ,\notag
			\end{align} which contradicts with $k\geq 6$.
\end{itemize}
Hence, there is no positive integer solution for $k\geq 6$. 
	\end{enumerate}
In conclusion, the \Cref{Chen's equation} has positive integer solutions if and only if $k=5$.
\end{proof}
In \cite[Proposition 2.5]{Lam16}, Lampe studied a special but important finite mutation type cluster algebra $\mcA_{L}=\mcA(B_0)$, whose initial exchange matrix is \begin{align}
	B_0=\begin{pmatrix}0 & 1 & -1\\ -4 & 0 & 2\\ 4 & -2 & 0\end{pmatrix}.\label{4-matrix}
\end{align} Note that $B_0$ is sign-equivalent and the cluster mutation rules are: 
$$\begin{array}{c}
	\mu_{1}(x_1,x_2,x_3)=\left(\dfrac{x_2^4+x_3^4}{x_1},x_2,x_3\right),\\ \mu_{2}(x_1,x_2,x_3)=\left(x_1,\dfrac{x_1+x_3^2}{x_2},x_3\right),\\ \mu_{3}(x_1,x_2,x_3)=\left(x_1,x_2,\dfrac{x_1+x_2^2}{x_3}\right).
\end{array}$$
Then, Lampe put forward a Laurent mutation invariant of $\mcA_{L}$ as follows: \begin{align}
\mcT_2(x_1,x_2,x_3)=\dfrac{x_1^2+x_2^4+x_3^4+2x_1x_2^2+2x_1x_3^2}{x_1x_2^2x_3^2},\label{Lampe mutation invariant 3}
\end{align} that is $\mathcal{T}_2( \mu_{i}(x_1,x_2,x_3))=\mathcal{T}_2(x_1,x_2,x_3)$, for 
		$i= 1,2,3$. Here, we call \eqref{Lampe mutation invariant 3} \emph{the variant of Markov mutation invariant}. According to \cite[Theorem 2.6]{Lam16}, all
		 the solutions to a variant of Markov Diophantine equations as follows \begin{align}
x_1^2+x_2^4+x_3^4+2x_1x_2^2+2x_1x_3^2=7x_1x_2^2x_3^2\notag
		 \end{align} can be generated by the initial solution $(1,1,1)$ through finite cluster mutations.

Hence, motivated by Aigner's work \cite{Aig13}, a natural question is to find the conditions for the equations $\mcT_2(x_1,x_2,x_3)=k\ (k\in \mbN)$ to have positive integer solutions. Equivalently, we need to find all the positive integer points of $\mathcal{T}_2(x_1,x_2,x_3)$.

\begin{theorem}\label{analogue}
	Let $k\in \mbN$ and the Diophantine equation be as follows:
	\begin{align}
x_1^2+x_2^4+x_3^4+2x_1x_2^2+2x_1x_3^2=kx_1x_2^2x_3^2. \label{Lampe's equation}
	\end{align}
	Then, it has positive integer solutions if and only if $k=7$.
\end{theorem}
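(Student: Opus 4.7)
The sufficiency direction is immediate: $(1,1,1)$ solves the equation with $k=7$, and by \cite[Theorem 2.6]{Lam16} all positive integer solutions at $k=7$ are generated from $(1,1,1)$ by the cluster mutations of $\mcA_L$. For necessity I would mirror the strategy of \Cref{Aigner's theorem}, combining an algebraic reformulation with Vieta-style descent through the mutations of $\mcA_L$.

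My first step is the identity
\begin{align*}
a^2+b^4+c^4+2ab^2+2ac^2 = (a+b^2+c^2)^2 - 2b^2c^2,
\end{align*}
which rewrites \eqref{Lampe's equation} in the compact form $(a+b^2+c^2)^2 = (ka+2)\,b^2c^2$. Hence $ka+2$ must be a perfect square; setting $ka+2=m^2$ with $m\in\mbN_{+}$ gives the equivalent Markov-type system $a+b^2+c^2 = mbc$ and $ka = m^2-2$. This converts the rank-$3$ Laurent problem into an Aigner-style Markov equation linearly parametrised by $a$, and it resolves the boundary cases directly. Indeed, if $b=1$ (or $c=1$, by the $b\leftrightarrow c$ symmetry of \eqref{Lampe's equation}), substitution turns \eqref{Lampe's equation} into $c^4 + (a+1)^2 = (k-2)\,a\,c^2$, and \Cref{reduced analogue} then forces $k-2=5$, so $k=7$. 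If $a=1$, the reformulation becomes $1+b^2+c^2 = \sqrt{k+2}\,bc$; applying \Cref{Aigner's theorem} (with the variable $x_1$ set to $1$) forces $\sqrt{k+2}=3$, again $k=7$.

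It remains to show that every positive integer solution $(a,b,c)$ can be mutated down to one with $\min(a,b,c)=1$. The cluster mutations $\mu_1,\mu_2,\mu_3$ of $\mcA_L$ send $(a,b,c)$ to $\bigl((b^4+c^4)/a,b,c\bigr)$, $\bigl(a,(a+c^2)/b,c\bigr)$ and $\bigl(a,b,(a+b^2)/c\bigr)$ respectively, and integrality of the new entries follows from the divisibilities $a\mid b^4+c^4$, $b\mid a+c^2$ and $c\mid a+b^2$, all consequences of \eqref{Lampe's equation}. Choosing a solution that minimises the potential $N=a+b^2+c^2$, non-decrease of $N$ under each $\mu_i$ forces the three inequalities
\begin{align*}
a^2\le b^4+c^4, \qquad b^2\le a+c^2, \qquad c^2\le a+b^2.
\end{align*}
Assuming $\min(a,b,c)\ge 2$ and combining these with $a+b^2+c^2=mbc$ yields $|b^2-c^2|\le a\le b^2+c^2$ and $mbc\le 2(b^2+c^2)$, which bounds $m$ in terms of $b$ and $c$. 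The divisibility constraint $a\mid m^2-2$ then cuts the feasible set down to finitely many candidate quadruples $(a,b,c,m)$, each of which can be eliminated by a discriminant argument of the same type used for $k=3$ and $k=4$ inside the proof of \Cref{reduced analogue}. The resulting contradiction forces the minimal solution to have some coordinate equal to $1$, and the previous paragraph then yields $k=7$.

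The principal obstacle is the last arithmetic elimination. The inequalities coming from minimality, taken in isolation, do not force $k=7$; only when paired with the Markov relation and the integrality of $k=(m^2-2)/a$ do they constrain the four parameters $a,b,c,m$ tightly enough. The reformulation $(a+b^2+c^2)^2=(ka+2)\,b^2c^2$ is precisely what makes this combined system tractable, because it reduces a genuinely rank-$3$ Laurent invariant to a two-parameter Markov equation on which the Vieta-jumping tools already employed in \Cref{Aigner's theorem} and \Cref{reduced analogue} apply directly.
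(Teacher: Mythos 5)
Your reformulation $(a+b^2+c^2)^2=(ka+2)\,b^2c^2$ is correct and is a genuinely different route from the paper, which instead substitutes $X_2=x_2^2$, $X_3=x_3^2$ and runs a strict-decrease descent on the maximal component with case-by-case congruence bounds for each $k$. Your boundary cases are also handled correctly, except that for $a=1$ the relevant citation is \Cref{reduced Aigner's theorem} (the rank-$2$ statement that $1+b^2+c^2=mbc$ forces $m=3$), not \Cref{Aigner's theorem}, which concerns the three-variable equation and cannot simply have a variable ``set to $1$.'' Likewise the minimal-potential setup is sound: at a solution minimising $N=a+b^2+c^2$ the three inequalities $a^2\le b^4+c^4$, $b^2\le a+c^2$, $c^2\le a+b^2$ do hold.

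The genuine gap is the final elimination, which is the heart of the necessity direction and is only asserted. The claim that your inequalities cut the feasible set down to ``finitely many candidate quadruples'' is false without the divisibility $a\mid m^2-2$: take $c=2$ and any $b\ge 5$; then $2b/c\le m\le 2b/c+2c/b$ forces $m=b$, whence $a=mbc-b^2-c^2=b^2-4$, and the quadruple $(b^2-4,\,b,\,2,\,b)$ satisfies every one of your constraints, giving an infinite one-parameter family. These candidates are killed only by the integrality of $k=(m^2-2)/a=(b^2-2)/(b^2-4)$, i.e.\ by $b^2-4\mid 2$ --- a divisibility computation, not ``a discriminant argument of the same type used for $k=3$ and $k=4$.'' So to complete the proof you must actually (i) prove finiteness for $c\ge 3$ (where $b^2-c^2\le a\le m^2-2$ does bound $b$ in terms of $c$), (ii) dispose of the infinite families at $c=2$ (and symmetrically $b=2$) via the divisibility, and (iii) enumerate and eliminate the remaining finite list. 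None of this is carried out, and you concede as much in your closing paragraph; as it stands the argument establishes the reduction but not the theorem.
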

\begin{proof}
	By \cite[Theorem 2.6]{Lam16}, it is direct that when $k=7$, there are positive integer solutions to the \Cref{Lampe's equation}. Hence, we only need to prove that when $1\leq k\leq 6$ and  $k\geq 8$, there is no positive integer solution. Let $X_1=x_1, X_2=x_2^2,X_3=x_3^2$. Then, we have the \Cref{Lampe's equation} is equivalent to 
	\begin{align}
X_1^2+X_2^2+X_3^2+2X_1X_2+2X_1X_3=kX_1X_2X_3. \label{Lampe's equation II}
	\end{align} Without loss of generality, we assume that $(a,b,c)$ is a positive integer solution to the \Cref{Lampe's equation} with $b\geq c$, which implies that $(A,B,C)=(a,b^2,c^2)$ is a positive integer solution to the \Cref{Lampe's equation II} with $B\geq C$. By \Cref{reduced analogue}, we have $b\neq 1$ and $c\neq 1$, which implies that $B=b^2\geq 4$ and $C=c^2\geq 4$.
	
	 Now, we define a variant of cluster mutation maps $\widehat{\mu_i}: \mbQ_+^3\rightarrow \mbQ_+^3$ by $$\begin{array}{c}
	\widehat{\mu_1}(X_1,X_2,X_3)=\left(\dfrac{X_2^2+X_3^2}{X_1},X_2,X_3\right),\\ \widehat{\mu_2}(X_1,X_2,X_3)=\left(X_1,\dfrac{(X_1+X_3)^2}{X_2},X_3\right),\\ \widehat{\mu_3}(X_1,X_2,X_3)=\left(X_1,X_2,\dfrac{(X_1+X_2)^2}{X_3}\right).
\end{array}$$ Let $S$ be a map from $\mbQ_+^3$ to itself, such that $S(x_1,x_2,x_3)=(x_1,x_2^2,x_3^2)$. Then, we have $\mu_i$ and $\widehat{\mu_i}$ are compatible with $S$, that is \begin{align}\widehat{\mu_i}(S(x_1,x_2,x_3))=S(\mu_i(x_1,x_2,x_3)),\ \text{for}\ i= 1,2,3.\notag\end{align}
Let $f(\lambda)=\lambda^2+(2B+2C-kBC)\lambda+B^2+C^2$ and $g(\lambda)=\lambda^2+(2A-kAC)\lambda+(A+C)^2.$	
Note that $A$ is a zero of $f(\lambda)$ and $B$ is a zero of $g(\lambda)$. Take $(A^{\prime},B,C)=\widehat{\mu_1}(A,B,C)$ and $(A,B^{\prime},C)=\widehat{\mu_2}(A,B,C)$. By the Vieta's formula, $A^{\prime}$ is another zero of $f(\lambda)$ and $B^{\prime}$ is another zero of $g(\lambda)$. Furthermore, according to the mutation rules and the equalities
\begin{align}
	A+A^{\prime}=kBC-2B-2C,\ B+B^{\prime}=kAC-2A,\notag
\end{align} we conclude that both $A^{\prime}$ and $B^{\prime}$ are positive integers.

Now, we claim that after mutating a solution at the direction where the component is maximal, we can obtain a new solution whose maximal component is strictly less than before. There are several cases for the values of $k$ to be discussed as follows.
	\begin{enumerate}[leftmargin=2em]
		\item If $k$=1, then we have $a^2+b^4+c^4+2ab^2+2ac^2=ab^2c^2$ and $A^2+B^2+C^2+2AB+2AC=ABC$. Note that $a\geq3, b\geq 3, c\geq 3$, which implies that \begin{align}A\geq 3,B\geq 9,C\geq 9.\notag\end{align} 
		Beforehand, we claim that $A\geq 7$. In fact,\begin{itemize}[leftmargin=1em]\itemsep=0pt
 	\item If $A=a=3$, we have $9+b^4+c^4+6b^2+6c^2=3b^2c^2$, which implies that $b^4+c^4\equiv 0\, (\mod 3)$. By \Cref{modulo 3}, we have $b^2\equiv 0\, (\mod 3)$ and $c^2\equiv 0\, (\mod 3)$. Hence, we get $b\equiv 0\, (\mod 3)$ and $c\equiv 0\, (\mod 3)$. Note that \begin{align}9+b^4+c^4+6b^2+6c^2\nequiv 0\, (\mod 27),\notag\end{align} which contradicts with $3b^2c^2\equiv 0\, (\mod 27)$.
 	\item If $A=a=4$, we have $16+b^4+c^4+8b^2+8c^2=4b^2c^2$. % which implies that $b^4+c^4\equiv 0\, (\mod 4)$. By \Cref{modulo 3}, we have $b^2\equiv 0\, (\mod 4)$ and $c^2\equiv 0\, (\mod 4)$. Hence, we get $b\equiv 0\, (\mod 2)$ and $c\equiv 0\, (\mod 2)$ and assume that $b=2s$ and $c=2t$. 
 	Note that by \Cref{modulo 3}, we have $b^2\equiv 1\, (\mod 3)$ and $c^2\equiv 1\, (\mod 3)$. Assume that $b^2=3s+1$ and $c^2=3t+1$, where $s,t \in \mbN$. Then, the equation is equivalent to \begin{align}
 %16+(3s+1)^2+(3t+1)^2+8(3s+1)+8(3t+1)&=4(3s+1)(3t+1)\notag,\\ 
 	9s^2+9t^2+18s+18t+30&=36st.\notag 
 	\end{align} However, note that $9s^2+9t^2+18s+18t+30\nequiv 0\, (\mod 9)$, which contradicts with $36st\equiv 0\, (\mod 9)$.

 	\item If $A=a=5$, we have $25+b^4+c^4+10b^2+10c^2=5b^2c^2$, which implies that $b^4+c^4\equiv 0\, (\mod 5)$. By \Cref{modulo 3}, we have $b^2\equiv 0\, (\mod 5)$ and $c^2\equiv 0\, (\mod 5)$. Hence, we get $b\equiv 0\, (\mod 5)$ and $c\equiv 0\, (\mod 5)$. Note that \begin{align}25+b^4+c^4+10b^2+10c^2\nequiv 0\, (\mod 125),\notag\end{align} which contradicts with $5b^2c^2\equiv 0\, (\mod 125)$.
 	\item If $A=a=6$, we have $36+b^4+c^4+12b^2+12c^2=6b^2c^2$, which implies that $b^4+c^4\equiv 0\, (\mod 6)$. By \Cref{modulo 3}, we have \begin{align}
 		\left\{
		\begin{array}{ll}
			b^2\equiv 0\, (\mod 6)  \\
			c^2\equiv 0\, (\mod 6)
		\end{array} \right. \text{or}\,
		\left\{\begin{array}{ll}
			b^2\equiv 3\, (\mod 6)  \\
			c^2\equiv 3\, (\mod 6)
		\end{array} \right. .\notag
 	\end{align} The former implies that $b\equiv 0\, (\mod 6)$ and $c\equiv 0\, (\mod 6)$. However, note that \begin{align}36+b^4+c^4+12b^2+12c^2\nequiv 0\, (\mod 216),\notag\end{align} which contradicts with $6b^2c^2\equiv 0\, (\mod 216)$. As for the later, assume that $b^2=6s+3$ and $c^2=6t+3$, where $s,t \in \mbN$. Then, the equation is equivalent to \begin{align}
 		s^2+t^2+2=6st,\notag
 	\end{align} which implies that $s$ and $t$ are both odd or even. When $s=2j$ and $t=2k$, where $j,k\in \mbN$, we have \begin{align}
 		4j^2+4k^2+2=24jk.\notag
 	\end{align} Note that $4j^2+4k^2+2\nequiv 0\, (\mod 4)$, which contradicts with $24jk \equiv 0\, (\mod 4)$. When $s=2j+1$ and $t=2k+1$, where $j,k\in \mbN$, we have \begin{align}
 	4j^2+4j+4k^2+4k+4=24jk+12j+12k+6.\notag
 	\end{align} Note that $4j^2+4j+4k^2+4k+4\equiv 0\, (\mod 4)$, which contradicts with $24jk+12j+12k+6 \nequiv 0\, (\mod 4)$.
 \end{itemize}
Hence, we obtain that $A=a\geq 7$.

		Now, we firstly assume that $A\geq B$. Then, we have $\max(A,B,C)=A$ and 
		\begin{align}
			f(B)&=4B^2+C^2+2BC-B^2C\notag\\&=B^2(9-C)-5B^2+2BC+C^2\notag\\ &\leq B^2(9-C)+(C^2-3B^2)\notag\\&<0.\notag
		\end{align} Hence, we get $B$ lies between $A$ and $A^{\prime}$, which implies that $A^{\prime}<B<A$. Therefore, we have \begin{align}
	\max(A^{\prime},B,C)<\max(A,B,C).\notag
		\end{align} 
		Secondly, we assume that $A<B$. If $C\leq A$, then we have $\max(A,B,C)=B$ and 
		\begin{align}
			g(A)&=4A^2-A^2C+2AC+C^2\notag\\ &=A^2(9-C)-5A^2+2AC+C^2\notag\\&\leq A^2(9-C)+(C^2-3A^2)\notag\\ &<0.\notag
		\end{align} It implies that $B^{\prime}<A<B$ and $C<B$. Therefore, we have \begin{align} 
	\max(A,B^{\prime},C)<\max(A,B,C).\notag
		\end{align} If $A< C$, then we have $\max(A,B,C)=B$ and \begin{align} 
			g(C)&=2C^2-AC^2+4AC+A^2\notag\\&= C^2(7-A)-5C^2+4AC+A^2\notag\\ &\leq C^2(7-A)+(A^2-C^2)\notag\\ &<0.\notag
		\end{align} It implies that $B^{\prime}<C<B$ and $A<B$. Therefore, we have \begin{align} 
	\max(A,B^{\prime},C)<\max(A,B,C).\notag
		\end{align}
		\item If $k=2$, then we have $a^2+b^4+c^4+2ab^2+2ac^2=2ab^2c^2$ and $A^2+B^2+C^2+2AB+2AC=2ABC$. It is direct that $a=A\geq 2$ and we claim that $A\geq 4$. In fact,\begin{itemize}[leftmargin=1em]\itemsep=0pt
 	\item If $A=a=2$, we have $4+b^4+c^4+4b^2+4c^2=4b^2c^2$, which implies that $b^4+c^4\equiv 0\, (\mod 4)$. By \Cref{modulo 3}, we have $b^2\equiv 0\, (\mod 4)$ and $c^2\equiv 0\, (\mod 4)$. Hence, we get $b\equiv 0\, (\mod 2)$ and $c\equiv 0\, (\mod 2)$. Note that \begin{align}4+b^4+c^4+4b^2+4c^2\nequiv 0\, (\mod 16),\notag\end{align} which contradicts with $4b^2c^2\equiv 0\, (\mod 16)$.
 	\item If $A=a=3$, we have $9+b^4+c^4+6b^2+6c^2=6b^2c^2$, which implies that $b^4+c^4\equiv 0\, (\mod 3)$. By \Cref{modulo 3}, we have $b^2\equiv 0\, (\mod 3)$ and $c^2\equiv 0\, (\mod 3)$. Hence, we get $b\equiv 0\, (\mod 3)$ and $c\equiv 0\, (\mod 3)$. Note that \begin{align}9+b^4+c^4+6b^2+6c^2\nequiv 0\, (\mod 27),\notag\end{align} which contradicts with $6b^2c^2\equiv 0\, (\mod 27)$.
 	\end{itemize}
 Hence, we have proved that $A=a\geq 4$.
 	
		 Firstly, we assume that $A\geq B$. Then, we have $\max(A,B,C)=A$ and 
		\begin{align}
			f(B)&=4B^2+C^2+2BC-2B^2C\notag\\&=2B^2(4-C)-4B^2+C^2+2BC\notag\\ &\leq 2B^2(4-C)-2B^2+C^2\notag\\&<0.\notag
		\end{align} Hence, we get $B$ lies between $A$ and $A^{\prime}$, which implies that $A^{\prime}<B<A$. Therefore, we have \begin{align}
	\max(A^{\prime},B,C)<\max(A,B,C).\notag
		\end{align} Secondly, we assume that $A<B$. If $C\leq A$, then we have $\max(A,B,C)=B$ and 
		\begin{align}
			g(A)&=4A^2-2A^2C+2AC+C^2\notag\\ &=2A^2(4-C)-4A^2+2AC+C^2\notag\\&\leq 2A^2(4-C)+(C^2-2A^2)\notag\\ &<0.\notag
		\end{align} It implies that $B^{\prime}<A<B$ and $C<B$. Therefore, we have \begin{align} 
	\max(A,B^{\prime},C)<\max(A,B,C).\notag
		\end{align} If $A< C$, then we have $\max(A,B,C)=B$ and \begin{align} 
			g(C)&=2C^2-2AC^2+4AC+A^2\notag\\&= 2C^2(4-A)-6C^2+4AC+A^2\notag\\ &\leq 2C^2(4-A)+(A^2-2C^2)\notag\\ &<0.\notag
		\end{align} It implies that $B^{\prime}<C<B$ and $A<B$. Therefore, we have \begin{align} 
	\max(A,B^{\prime},C)<\max(A,B,C).\notag
		\end{align}
		\item If $k=3$, then we have $a^2+b^4+c^4+2ab^2+2ac^2=3ab^2c^2$ and $A^2+B^2+C^2+2AB+2AC=3ABC$. Note that $a\geq 2$. Otherwise, we have \begin{align}
	b^4+c^4+2b^2+2c^2+1=3b^2c^2,\notag
		\end{align} which implies that $b^4+c^4+2b^2+2c^2+1\equiv 0\, (\mod 3)$. However, by \Cref{modulo 3}, it implies that $b^4+c^4+2b^2+2c^2+1\equiv 1\, (\mod 3)$, which is a contradiction. Hence, we have $A=a\geq 2$.
		
		 Firstly, we assume that $A\geq B$. Then, we have $\max(A,B,C)=A$ and 
		\begin{align}
			f(B)&=4B^2+C^2+2BC-3B^2C\notag\\&=3B^2(4-C)-8B^2+C^2+2BC\notag\\ &\leq 3B^2(4-C)+(C^2-6B^2)\notag\\&<0.\notag
		\end{align} Hence, we get $B$ lies between $A$ and $A^{\prime}$, which implies that $A^{\prime}<B<A$. Therefore, we have \begin{align}
	\max(A^{\prime},B,C)<\max(A,B,C).\notag
		\end{align} Secondly, we assume that $A<B$. If $C\leq A$, then we have $\max(A,B,C)=B$ and 
		\begin{align}
			g(A)&=4A^2-3A^2C+2AC+C^2\notag\\ &=3A^2(4-C)-8A^2+2AC+C^2\notag\\&\leq 3A^2(4-C)+(C^2-6A^2)\notag\\ &<0.\notag
		\end{align} It implies that $B^{\prime}<A<B$ and $C<B$. Therefore, we have \begin{align} 
	\max(A,B^{\prime},C)<\max(A,B,C).\notag
		\end{align} If $A< C$, then we have $\max(A,B,C)=B$ and \begin{align} 
			g(C)&=2C^2-3AC^2+4AC+A^2\notag\\&= 2C^2(2-A)-2C^2-AC^2+4AC+A^2\notag\\ &= 2C^2(2-A)+AC(4-C)+(A^2-2C^2)\notag\\ &<0.\notag
		\end{align} It implies that $B^{\prime}<C<B$ and $A<B$. Therefore, we have \begin{align} 
	\max(A,B^{\prime},C)<\max(A,B,C).\notag
		\end{align}

		\item If $k=4,5,6$ or $k\geq 8$, we have $A^2+B^2+C^2+2AB+2AC=kABC$. Firstly, we assume that $A\geq B$. Then, we have $\max(A,B,C)=A$ and 
		\begin{align}
			f(B)&=4B^2+C^2+2BC-kB^2C\notag\\&=4B^2(4-C)-12B^2+C^2+2BC+(4-k)B^2C\notag\\ &\leq 4B^2(4-C)+(C^2-10B^2)+(4-k)B^2C\notag\\&<0.\notag
		\end{align} Hence, we get $B$ lies between $A$ and $A^{\prime}$, which implies that $A^{\prime}<B<A$. Therefore, we have \begin{align}
	\max(A^{\prime},B,C)<\max(A,B,C).\notag
		\end{align} Secondly, we assume that $A<B$. If $C\leq A$, then we have $\max(A,B,C)=B$ and 
		\begin{align}
			g(A)&=4A^2-4A^2C+2AC+C^2+(4-k)A^2C\notag\\ &=4A^2(4-C)-12A^2+2AC+C^2+(4-k)A^2C\notag\\&\leq 4A^2(4-C)+(C^2-10A^2)+(4-k)A^2C\notag\\ &<0.\notag
		\end{align} It implies that $B^{\prime}<A<B$ and $C<B$. Therefore, we have \begin{align} 
	\max(A,B^{\prime},C)<\max(A,B,C).\notag
		\end{align} If $A< C$, then we have $\max(A,B,C)=B$ and \begin{align} 
			g(C)&=2C^2-4AC^2+4AC+A^2+(4-k)AC^2\notag\\ &= 2C^2(1-A)+2AC(4-C)+A(A-4C)+(4-k)AC^2\notag\\ &<0.\notag
		\end{align} It implies that $B^{\prime}<C<B$ and $A<B$. Therefore, we have \begin{align} 
	\max(A,B^{\prime},C)<\max(A,B,C).\notag
		\end{align}
%	\item If $k=5$, we have $A^2+B^2+C^2+2AB+2AC=5ABC$. Firstly, we assume that $A\geq B$. Then, we have $\max(A,B,C)=A$ and 
%		\begin{align}
%			f(B)&=4B^2+C^2+2BC-5B^2C\notag\\&=4B^2(2-C)-4B^2+C^2+2BC-B^2C\notag\\ &= 4B^2(4-C)+(C-2B)(C+2B)+BC(2-B)\notag\\&<0.\notag
%		\end{align} Hence, we get $B$ lies between $A$ and $A^{\prime}$, which implies that $A^{\prime}<B<A$. Therefore, we have \begin{align}
%	\max(A^{\prime},B,C)<\max(A,B,C).
%		\end{align} Secondly, we assume that $A<B$. If $C\leq A$, then we have $\max(A,B,C)=B$ and 
%		\begin{align}
%			g(A)&=4A^2-5A^2C+2AC+C^2\notag\\ &=4A^2(2-C)-4A^2+2AC+C^2\notag\\&\leq 4A^2(4-C)+(C^2-10A^2)\notag\\ &<0.\notag
%		\end{align} It implies that $B^{\prime}<A<B$ and $C<B$. Therefore, we have \begin{align} 
%	\max(A,B^{\prime},C)<\max(A,B,C).
%		\end{align} If $A< C$, then we have $\max(A,B,C)=B$ and \begin{align} 
%			g(C)&=2C^2-4AC^2+4AC+A^2\notag\\ &= 2C^2(1-A)+2AC(4-C)+A(A-4C)\notag\\ &<0.\notag
%		\end{align} It implies that $B^{\prime}<C<B$ and $A<B$. Therefore, we have \begin{align} 
%	\max(A,B^{\prime},C)<\max(A,B,C).
%		\end{align}
	\end{enumerate}
Hence, the claim holds, which means the certain cluster mutations $\widehat{\mu_i}$ can decrease the maximal component of a solution constantly. Then, note that \begin{enumerate}[leftmargin=2em]
\item When $k=1$, the process will end up with a solution $(A_0,B_0,C_0)$, such that \begin{align}
	A_0< 7\ \text{or}\ \min(B_0,C_0)<4.\notag 
\end{align}
\item When $k=2$, the process will end up with a solution $(A_0,B_0,C_0)$, such that \begin{align}
	A_0< 4\ \text{or}\ \min(B_0,C_0)<4. \notag
\end{align}
\item When $k=3$, the process will end up with a solution $(A_0,B_0,C_0)$, such that \begin{align}
	A_0< 2\ \text{or}\ \min(B_0,C_0)<4. \notag
\end{align}
	\item When $k=4,5,6$ or $k\geq 8$, the process will end up with a solution $(A_0,B_0,C_0)$, such that \begin{align}
	\min(B_0,C_0)<4. \notag
\end{align}
\end{enumerate}
However, for all the cases that $k\neq 7$, the conditions $(1)-(4)$ lead to a contradiction with the initial conditions. Hence, we complete the proof.
 \end{proof}
\begin{remark}
	To some extent, based on \Cref{Aigner's theorem} and \Cref{analogue}, we generalize the results given by Aigner \cite{Aig13} and Lampe \cite{Lam16} by use of cluster mutations. \end{remark}
\section{Application: Diophantine equations with cluster algebraic structures}\label{S6}
In this section, as an application of the results in \Cref{main section}, we aim to find several classes of Diophantine equations with cluster algebraic structures, that is all the positive integer solutions can be generated by an initial solution through finite cluster mutations.
\subsection{Diophantine equations with two variables}\

First of all, we recall an important lemma about commutative algebras as follows.
\begin{lemma}[{\cite[Example 5.0]{AM69}}]\label{Atiyah}
	The integer ring $\mbZ$ is integrally closed in the rational field $\mbQ$.
\end{lemma}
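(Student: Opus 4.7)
The plan is to invoke the standard argument that a rational root of a monic integer polynomial must be an integer (the ``rational root theorem'' in its cleanest form). Concretely, suppose $\alpha \in \mbQ$ is integral over $\mbZ$, so there exist $c_0,c_1,\dots,c_{n-1} \in \mbZ$ with
\begin{align*}
\alpha^n + c_{n-1}\alpha^{n-1} + \cdots + c_1\alpha + c_0 = 0.
\end{align*}
Write $\alpha = p/q$ in lowest terms with $p,q \in \mbZ$, $q>0$, and $\gcd(p,q)=1$. The goal is to show $q=1$.

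Next, I would clear denominators by multiplying the defining equation through by $q^n$, obtaining
\begin{align*}
p^n + c_{n-1}p^{n-1}q + c_{n-2}p^{n-2}q^2 + \cdots + c_0 q^n = 0.
\end{align*}
Reorganising gives $p^n = -q\bigl(c_{n-1}p^{n-1} + c_{n-2}p^{n-2}q + \cdots + c_0 q^{n-1}\bigr)$, so $q \mid p^n$. Since $\gcd(p,q)=1$ implies $\gcd(p^n,q)=1$, this forces $q=1$, and hence $\alpha = p \in \mbZ$.

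The entire argument is elementary number theory once one has unique factorisation (or equivalently the coprimality-preservation under taking powers) in $\mbZ$; there is no genuine obstacle. The only small point worth stating explicitly is the passage from $q \mid p^n$ and $\gcd(p,q)=1$ to $q=1$, which is where the UFD/PID structure of $\mbZ$ is used. One could alternatively cite Atiyah--Macdonald directly as the paper already does, but including the one-paragraph proof above makes the application self-contained for the sequel.
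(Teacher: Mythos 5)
Your proof is correct and is the standard clearing-denominators argument; the paper itself gives no proof but simply cites Atiyah--Macdonald, where the same argument (for an arbitrary UFD) is used. Your write-up is a valid, self-contained substitute for that citation, and the one step you flag --- deducing $q=1$ from $q\mid p^n$ and $\gcd(p,q)=1$ --- is indeed the only place where the arithmetic of $\mbZ$ enters.
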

In other words, for any $r\in \mbQ$, if it is a root of a monic polynomial with coefficients in $\mbZ$, then $r\in \mbZ$.
%\begin{proposition}\label{prop1}
%	The Diophantine equation \begin{align}
%		x_1^2+x_2^2+x_3^3=kx_1x_2x_3 \label{dio1}
%	\end{align} has positive integer solutions if and only if $k=1\ \text{or}\ 3$.
%\end{proposition}
%\begin{remark}
%	When $k=3$, the Diophantine equation is called \emph{Markov equation}, which... In addition, for $k=1$, the triple $(a_1,a_2,a_3)$ is a solution if and only if $a_i\ (i=1,2,3)$ is divisible by $3$ and $(\frac{a_1}{3},\frac{a_2}{3},\frac{a_3}{3})$ is a solution to the Markov equation.
%\end{remark}
\begin{proposition}\label{22case}
	Let $F(X)\in \mbZ[X]$ be a non-constant monic polynomial and $t\in \mbZ$. The Diophantine equation \begin{align}
		F\left(\dfrac{x_1^2+x_2^2+1}{x_1x_2}\right)=F(t) \label{F22}
	\end{align} has positive integer solutions if and only if $F(t)=F(3)$. Under this case, all the positive integer solutions can be obtained from the initial solution $(1,1)$ by finite cluster mutations of $\mcA(2,-2)$.
\end{proposition}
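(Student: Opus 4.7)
The plan is to reduce the proposition to the already-established \Cref{reduced Aigner's theorem} (the rank 2 analogue) plus the integrality argument provided by \Cref{Atiyah}. The point is that $\mathcal{T}_1(x_1,x_2)=\frac{x_1^2+x_2^2+1}{x_1x_2}$ is the mutation invariant of $\mathcal{A}(2,-2)$, so \Cref{F22} is essentially asking when a value of this rational function takes a prescribed $F$-value.

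For the ``only if'' direction, suppose $(a,b)\in \mathbb{N}_+^2$ is a solution and set $r:=\mathcal{T}_1(a,b)=\frac{a^2+b^2+1}{ab}\in \mathbb{Q}_{>0}$. Since $F(X)\in \mathbb{Z}[X]$ is monic and $F(r)=F(t)\in \mathbb{Z}$, the rational number $r$ is a root of the monic integer polynomial $F(X)-F(t)$. By \Cref{Atiyah}, $r\in \mathbb{Z}$, hence $r\in \mathbb{N}_+$. Writing $r=k$, the pair $(a,b)$ is then a positive integer solution to $x_1^2+x_2^2+1=kx_1x_2$, and \Cref{reduced Aigner's theorem} forces $k=3$. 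Therefore $F(t)=F(r)=F(3)$. Conversely, if $F(t)=F(3)$, then $(1,1)$ is a positive integer solution since $\mathcal{T}_1(1,1)=3$, so $F\bigl(\mathcal{T}_1(1,1)\bigr)=F(3)=F(t)$.

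For the cluster algebraic structure, assume $F(t)=F(3)$ and let $(a,b)\in \mathbb{N}_+^2$ be any positive integer solution. The argument above shows $\mathcal{T}_1(a,b)=3$, i.e.\ $(a,b)$ is a positive integer solution to the Markov-like equation $x_1^2+x_2^2+1=3x_1x_2$. By \cite[Lemma 3.9]{CL24} (recalled in \Cref{diophantine explanation}), every such solution can be obtained from the initial solution $(1,1)$ by a finite sequence of cluster mutations of $\mathcal{A}(2,-2)$. Conversely, the cluster mutations $\mu_1,\mu_2$ of $\mathcal{A}(2,-2)$ preserve positive integrality (as the only division is by $x_i$, and the resulting expression equals $(x_j^2+1)/x_i$, which is an integer precisely because $\mathcal{T}_1(x_1,x_2)\in \mathbb{N}_+$ along the mutation orbit), and they preserve the value $\mathcal{T}_1=3$, so they preserve the condition $F(\mathcal{T}_1)=F(t)$.

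The proof is essentially mechanical once the two ingredients are in place; the only subtle point is the passage ``$r\in \mathbb{Q}$ and $F(r)\in \mathbb{Z}$ with $F$ monic implies $r\in \mathbb{Z}$'', which is precisely \Cref{Atiyah}. I do not anticipate a real obstacle: monicity of $F$ is used crucially (without it the argument fails since one could have fractional roots), and the positivity of $r$ combined with \Cref{reduced Aigner's theorem} is what pins down the value $3$.
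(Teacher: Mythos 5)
Your proof is correct and follows essentially the same route as the paper: apply \Cref{Atiyah} to the monic polynomial $F(X)-F(t)$ to force $\mathcal{T}_1(a,b)\in\mbZ$, invoke \Cref{reduced Aigner's theorem} to pin the value to $3$, and then cite \cite[Lemma 3.9]{CL24} for the mutation-orbit statement. Your write-up is in fact slightly more careful in spelling out the ``only if'' logic than the paper's own wording.
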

\begin{proof}
	Let $G(X)=F(X)-F(t)$ and it is a monic polynomial over $\mbZ$. Then, the \Cref{F22} is equivalent to 
	\begin{align}
		G\left(\dfrac{x_1^2+x_2^2+1}{x_1x_2}\right)=0.\label{G22}
	\end{align} According to \Cref{Atiyah}, the \Cref{G22} has positive integer solutions if and only if \begin{align}\frac{x_1^2+x_2^2+1}{x_1x_2}\in \mbZ_{\geq 0}.\notag\end{align} Moreover, by \Cref{reduced Aigner's theorem}, we have \begin{align}\frac{x_1^2+x_2^2+1}{x_1x_2}=3,\notag\end{align} which implies that $F(t)=F(3)$. Then, based on \cite[Lemma 3.9]{CL24}, all the positive integer solutions can be generated by the initial solution $(1,1)$ through finite cluster mutations of $\mcA(2,-2)$.
	\end{proof}
\begin{example}
	Take $F(X)=(X-3)(X-4)$ and $t=4$. Then the equation \begin{align}
		F\left(\dfrac{x_1^2+x_2^2+1}{x_1x_2}\right)=F(4)\notag
	\end{align} has positive integer solutions since $F(4)=F(3)=0$. Then, all its positive integer solutions can be obtained from the initial solution $(1,1)$ by finite cluster mutations of $\mcA(2,-2)$.
\end{example}
\begin{remark}\label{counter-example 22}
	Note that when $F(X)$ is not monic, the \Cref{22case} does not hold. We give two counter-examples as follows.
	\begin{enumerate}[leftmargin=2em]
		\item Take $F_1(X)=4X^2-17X+18$ and $t=2$. Then $F_1(2)=0\neq F_1(3)$. However, $(2,2)$ is a solution to the equation \begin{align}
			F_1\left(\dfrac{x_1^2+x_2^2+1}{x_1x_2}\right)=F_1(2),\notag
		\end{align} which can't be obtained from the initial solution $(1,1)$ by finite cluster mutations of $\mcA(2,-2)$.
		\item Take $F_2(X)=3X^2-20X+36$ and $t=3$. Then, the equation is \begin{align}
			F_2\left(\dfrac{x_1^2+x_2^2+1}{x_1x_2}\right)=F_2(3).\notag
		\end{align} Note that both $(1,3)$ and $(10,3)$ are its solutions, which can't be obtained from $(1,1)$ by finite cluster mutations of $\mcA(2,-2)$.
	\end{enumerate}
\end{remark}
%In the following, for any $(x_1,x_2)\in \mbZ_{\geq 0}^2\backslash I$, denote by \begin{align}
%	\dfrac{x_1^2+x_2^2+1}{x_1x_2}=\dfrac{q}{p},
%\end{align} where $p$ and $q$ are coprime and by \Cref{reduced Aigner's theorem} the ratio is not an integer. We denote a non-constant and non-monic polynomial $F(X)\in \mbZ[X]$ by
%\begin{align}
%	F(X)=a_nX^n+a_{n-1}X^{n-1}+\dots+a_1X+a_0,
%\end{align} where $|a_n|\geq 2$ and $n\geq 1$. 
%\begin{theorem}\label{22case2}
%	Let $F(X)\in \mbZ[X]$ be a non-constant and non-monic polynomial. For the Diophantine equation \begin{align}
%		F\big(\dfrac{x_1^2+x_2^2+1}{x_1x_2}\big)=F(3), \label{dio22}
%	\end{align}  all its positive integer solutions can be obtained from the initial solution $(1,1)$ by finitely many mutations of $\mcA(2,-2)$ if and only if for any $(x_1,x_2)\in \mbZ_{\geq 0}^2\backslash I$, either $p\nmid a_n$ or $q\nmid (a_0-F(3))$.
%\end{theorem}
%\begin{proof}
	
%\end{proof}
%\begin{example}
	
%\end{example}
%\begin{proposition}\label{prop2}
%	The Diophantine equation \begin{align}
%		x_2^4+x_1^2+2x_1+1=kx_1x_2^2 \label{dio3}
%	\end{align} has positive integer solutions if and only if $k=5$.
%\end{proposition}
%\begin{proof}
	
%\end{proof}
\begin{proposition}\label{14case}
	Let $F(X)\in \mbZ[X]$ be a non-constant monic polynomial and $t\in \mbZ$. The Diophantine equation \begin{align}
		F\left(\dfrac{x_2^4+x_1^2+2x_1+1}{x_1x_2^2}\right)=F(t) \label{F14}
	\end{align} has positive integer solutions if and only if $F(t)=F(5)$. Under this case, all the positive integer solutions can be obtained from the initial solution $(1,1)$ by finite cluster mutations of $\mcA(1,-4)$.
\end{proposition}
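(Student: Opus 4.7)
The plan is to mimic the proof of Proposition~\ref{22case} verbatim, swapping the rank-2 ingredients from $\mcA(2,-2)$ to $\mcA(1,-4)$. The only nontrivial inputs needed are the integrality principle of Lemma~\ref{Atiyah} together with the classification Lemma~\ref{reduced analogue} and the mutation-generation statement from \cite[Lemma 3.12]{CL24}, both already established in the excerpt.

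First I would set $G(X) = F(X) - F(t) \in \mbZ[X]$; since $F$ is monic and non-constant, so is $G$. Then \eqref{F14} is equivalent to
\begin{equation*}
G\!\left(\dfrac{x_2^4+x_1^2+2x_1+1}{x_1x_2^2}\right)=0.
\end{equation*}
For any positive integer pair $(x_1,x_2)$, the rational number $r = \frac{x_2^4+x_1^2+2x_1+1}{x_1x_2^2} \in \mbQ_{>0}$ is then a root of the monic integer polynomial $G(X)$. By Lemma~\ref{Atiyah}, $\mbZ$ is integrally closed in $\mbQ$, so $r \in \mbZ_{\geq 0}$. Writing $r = k$, this says $(x_1, x_2)$ is a positive integer solution to $x_2^4+x_1^2+2x_1+1 = k x_1 x_2^2$ for some $k \in \mbN$.

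Invoking Lemma~\ref{reduced analogue}, such a $k$ must equal $5$, hence $F(t) = F(5)$, which gives the ``only if'' direction. Conversely, if $F(t)=F(5)$, then every positive integer solution $(x_1,x_2)$ to $\frac{x_2^4+x_1^2+2x_1+1}{x_1x_2^2}=5$ is automatically a solution to \eqref{F14}, and by \cite[Lemma 3.12]{CL24} these are exactly the pairs generated from $(1,1)$ by the cluster mutations $\mu_1,\mu_2$ of $\mcA(1,-4)$. Conversely, any positive integer solution to \eqref{F14} gives $r\in\mbZ_{\geq 0}$ with $G(r)=0$ and Lemma~\ref{reduced analogue} again forces $r=5$, so it is one of these mutation-generated pairs.

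I expect no real obstacle: the architecture is identical to Proposition~\ref{22case}, and every ingredient has been proved earlier in the paper. The only point that needs a moment of care is verifying that $G$ remains monic (which is immediate from $F$ being monic, since subtraction of the constant $F(t)$ does not affect the leading coefficient), so that Lemma~\ref{Atiyah} genuinely applies. As with Remark~\ref{counter-example 22}, the monic hypothesis is essential; dropping it would allow the integrality step to fail, and the analogous counterexamples in $\mcA(1,-4)$ could be produced by choosing $F(X)$ with denominators that match the $2$-integral values $r = \tfrac{\text{numerator}}{x_1 x_2^2}$ occurring at non-cluster positive integer pairs.
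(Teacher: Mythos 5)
Your proof is correct and follows essentially the same route as the paper's: reduce to $G(X)=F(X)-F(t)$, use the integral closedness of $\mbZ$ in $\mbQ$ (Lemma~\ref{Atiyah}) to force the Laurent value to be a non-negative integer, apply Lemma~\ref{reduced analogue} to pin it to $5$, and cite \cite[Lemma 3.12]{CL24} for the mutation-generation claim. No gaps.
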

\begin{proof}
	Let $G(X)=F(X)-F(t)$ and it is a monic polynomial over $\mbZ$. Then, the \Cref{F14} is equivalent to 
	\begin{align}
		G\left(\dfrac{x_2^4+x_1^2+2x_1+1}{x_1x_2^2}\right)=0.\label{G14}
	\end{align} According to \Cref{Atiyah}, the \Cref{G14} has positive integer solutions if and only if \begin{align}\frac{x_2^4+x_1^2+2x_1+1}{x_1x_2^2}\in \mbZ_{\geq 0}.\notag\end{align} Moreover, by \Cref{reduced analogue}, we have \begin{align}\frac{x_2^4+x_1^2+2x_1+1}{x_1x_2^2}=5,\notag\end{align} which implies that $F(t)=F(5)$. Hence, based on \cite[Lemma 3.12]{CL24}, all the positive integer solutions can be generated by the initial solution $(1,1)$ through finite cluster mutations of $\mcA(1,-4)$.
\end{proof}
%Now, for any $(x_1,x_2)\in \mbZ_{\geq 0}^2\backslash J$, denote by \begin{align}
%	\dfrac{x_2^4+x_1^2+2x_1+1}{x_1x_2^2}=\dfrac{q}{p},
%\end{align} where $p$ and $q$ are coprime and by \Cref{reduced analogue} the ratio is not an integer.
%\begin{theorem}\label{14case2}
%	Let $F(X)\in \mbZ[X]$ be a non-constant and non-monic polynomial. For the Diophantine equation \begin{align}
%		F\big(\dfrac{x_2^4+x_1^2+2x_1+1}{x_1x_2^2}\big)=F(5), \label{dio14}
%	\end{align}  all its positive integer solutions can be obtained from the initial solution $(1,1)$ by finitely many mutations of $\mcA(1,-4)$ if and only if for any $(x_1,x_2)\in \mbZ_{\geq 0}^2\backslash J$, either $p\nmid a_n$ or $q\nmid (a_0-F(5))$.
%\end{theorem}
%\begin{proof}
	
%\end{proof}
%\begin{example}
	
%\end{example}
%\begin{conjecture}
	
%\end{conjecture}
\subsection{Diophantine equations with three variables}\

In what follows, for brevity, we always denote by $\mcA_{P}$ and $\mcA_{L}$ the cluster algebras with the initial exchange matrix \eqref{2-matrix} and \eqref{4-matrix} respectively.
\begin{lemma}\label{1Markov}
	All the positive integer solutions to the Diophantine equation 
	\begin{align}
		x_1^2+x_2^2+x_3^2=x_1x_2x_3 \label{M}
	\end{align} can be obtained from the initial solution $(3,3,3)$ by finite cluster mutations of $\mcA_P$.
\end{lemma}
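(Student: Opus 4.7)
The plan is to reduce the equation $x_1^2+x_2^2+x_3^2=x_1x_2x_3$ to the Markov equation $y_1^2+y_2^2+y_3^2=3y_1y_2y_3$ via a divisibility-by-$3$ argument, and then invoke the known cluster-algebraic description of Markov triples (\cite[Theorem 2.3]{Lam16}, already cited in the excerpt) to propagate the result.

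First I would observe that $(3,3,3)$ solves \eqref{M}, and that the cluster mutation rules of $\mcA_P$ preserve solutions of \eqref{M}, as a direct consequence of Vieta's formulas applied to the quadratic $f(\lambda)=\lambda^2-bc\,\lambda+(b^2+c^2)$ when $(a,b,c)$ is a solution; the second root is precisely $\tfrac{b^2+c^2}{a}=bc-a\in\mbN_+$. Next, I would carry out a short modulo-$3$ analysis: if $(a,b,c)\in \mbN_+^3$ solves \eqref{M}, then using $x^2\equiv 0$ or $1\pmod 3$ (\Cref{modulo 3}(1)), a case check on how many of $a,b,c$ are divisible by $3$ forces all three to be divisible by $3$ (otherwise the left-hand side is congruent to $1$ or $2$ modulo $3$ while the right-hand side is $0$, or vice versa). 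This is also recorded in the Remark immediately following \Cref{Aigner's theorem}.

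Writing $(a,b,c)=(3a',3b',3c')$, substitution shows $a'^2+b'^2+c'^2=3a'b'c'$, so $(a',b',c')$ is a Markov triple. I would then check that the scaling map $\tau:(x_1,x_2,x_3)\mapsto(3x_1,3x_2,3x_3)$ intertwines the mutations of $\mcA_P$: for each $i$, the identity $\mu_i\circ\tau=\tau\circ\mu_i$ is a one-line computation, since $\frac{(3x_j)^2+(3x_k)^2}{3x_i}=3\cdot\frac{x_j^2+x_k^2}{x_i}$, and similarly for the other directions.

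Finally, by \cite[Theorem 2.3]{Lam16} every Markov triple is obtained from $(1,1,1)$ by finitely many cluster mutations of $\mcA_P$. Applying $\tau$ and using the intertwining property, every positive integer solution of \eqref{M} is obtained from $\tau(1,1,1)=(3,3,3)$ by finitely many cluster mutations of $\mcA_P$, finishing the proof. The only genuinely non-bookkeeping step is the mod-$3$ divisibility, but it is essentially immediate from \Cref{modulo 3}(1); everything else is a transport-of-structure argument.
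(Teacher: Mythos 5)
Your proof is correct and follows essentially the same route as the paper: reduce to the Markov equation via divisibility of all three components by $3$, use the intertwining identity $\mu_i(3x_1,3x_2,3x_3)=3\mu_i(x_1,x_2,x_3)$, and invoke \cite[Theorem 2.3]{Lam16}. The only cosmetic difference is that you verify the mod-$3$ divisibility directly, whereas the paper cites \cite[Proposition 2.2]{Aig13} for that correspondence.
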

\begin{proof}
	By \cite[Proposition 2.2]{Aig13}, there is a bijection between the Markov equation \eqref{Markov's equation} and the \Cref{M}. More precisely, the positive integer triple $(a_1,a_2,a_3)$ is a solution to the \Cref{M} if and only if $a_i\ (\text{for}\ i=1,2,3)$ is divisible by $3$ and $(\frac{a_1}{3},\frac{a_2}{3},\frac{a_3}{3})$ is a solution to the Markov equation \eqref{Markov's equation}. 
	
	Note that by the mutation rules of $\mcA_{P}$, we have 
	\begin{align}
		3\mu_i(x_1,x_2,x_3)=\mu_{i}(3x_1,3x_2,3x_3),\ \text{for}\ i=1,2,3.\notag
	\end{align} Hence, by induction, we obtain that  
	\begin{align}
		3(\mu_{k_t}\dots  \mu_{k_1})(x_1,x_2,x_3)=(\mu_{k_t}\dots  \mu_{k_1})(3x_1,3x_2,3x_3),\notag
	\end{align} for any $t\in \mbN$ and $(k_1,\dots k_t)\in \{1,2,3\}^t$.
In addition, by \cite[Theorem 2.3]{Lam16}, any solution $(a,b,c)$ to the Markov equation \eqref{Markov's equation} can be written as 
\begin{align}
	(a,b,c)=(\mu_{k_r}\dots  \mu_{k_1})(1,1,1),\notag
\end{align} where $r\in \mbN$ and $(k_r,\dots k_1)\in \{1,2,3\}^r$. Consequently, for any solution $(a_1,a_2,a_3)$ to the \Cref{M}, we have 
\begin{align}
	(a_1,a_2,a_3)&=3\left(\frac{a_1}{3},\frac{a_2}{3},\frac{a_3}{3}\right)\notag\\&=3(\mu_{k_r}\dots  \mu_{k_1})(1,1,1)\notag\\&=(\mu_{k_r}\dots  \mu_{k_1})(3,3,3).\notag
\end{align} Then, all the positive integer solutions to the \Cref{M} can be obtained from the initial solution $(3,3,3)$ by finite cluster mutations of $\mcA_P$.
\end{proof}
\begin{proposition}\label{Prop case}
	Let $F(X)\in \mbZ[X]$ be a non-constant monic polynomial and $t\in \mbZ$. The Diophantine equation \begin{align}
		F\left(\dfrac{x_1^2+x_2^2+x_3^2}{x_1x_2x_3}\right)=F(t) \label{F222}
	\end{align} has positive integer solutions if and only if $F(t)=F(1)$ or $F(t)=F(3)$. Moreover, \begin{enumerate}[leftmargin=2em]
		\item if $F(1)=F(3)$, all the positive integer solutions can be obtained from the initial solutions $(1,1,1)$ and $(3,3,3)$ by finite cluster mutations of $\mcA_P$;
		\item if $F(1)\neq  F(3)$, all the positive integer solutions can be obtained from either the initial solution $(1,1,1)$ or the initial solution $(3,3,3)$ by finite cluster mutations of $\mcA_P$.
	\end{enumerate} 
\end{proposition}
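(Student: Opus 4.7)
The plan is to reduce this to \Cref{Aigner's theorem} via the integrality lemma \Cref{Atiyah}, mirroring the strategy already used for \Cref{22case} and \Cref{14case}. First, I would set $G(X)=F(X)-F(t)\in \mbZ[X]$, which is still monic, and rewrite \Cref{F222} as
\begin{align}
G\!\left(\dfrac{x_1^2+x_2^2+x_3^2}{x_1x_2x_3}\right)=0. \notag
\end{align}
For any positive integer triple $(x_1,x_2,x_3)$, the rational number $r=\tfrac{x_1^2+x_2^2+x_3^2}{x_1x_2x_3}\in \mbQ_{>0}$ is then a root of the monic integer polynomial $G(X)$. By \Cref{Atiyah}, $\mbZ$ is integrally closed in $\mbQ$, so $r\in \mbN_+$. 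Writing $r=k$, the triple solves $x_1^2+x_2^2+x_3^2=kx_1x_2x_3$, whence \Cref{Aigner's theorem} forces $k\in\{1,3\}$. Consequently positive integer solutions exist if and only if $G(1)=0$ or $G(3)=0$, i.e. $F(t)=F(1)$ or $F(t)=F(3)$.

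For the cluster-mutation statement, I would split the set of positive integer solutions $\mcS$ according to the value of $r$: $\mcS=\mcS_1\cup\mcS_3$, where $\mcS_k=\{(x_1,x_2,x_3)\in \mbN_+^3:r(x_1,x_2,x_3)=k\}$. For $\mcS_3$, the Markov-type statement in \cite[Theorem 2.3]{Lam16} gives that every element of $\mcS_3$ is obtainable from $(1,1,1)$ by a finite sequence of mutations in $\mcA_P$, while for $\mcS_1$, \Cref{1Markov} gives the analogous statement with initial solution $(3,3,3)$. Observe that the mutation rules of $\mcA_P$ preserve $r$ (since $\mcT_1$ is a mutation invariant of $\mcA_P$), so mutating a solution in $\mcS_k$ yields another solution in $\mcS_k$; this guarantees that the two orbits do not contaminate each other.

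Combining these two ingredients gives the casework: if $F(1)=F(3)$, both $\mcS_1$ and $\mcS_3$ contribute to the solution set of \Cref{F222}, and they are reached from $(3,3,3)$ and $(1,1,1)$ respectively; if $F(1)\neq F(3)$, exactly one of $F(1)=F(t)$ or $F(3)=F(t)$ holds, so only the corresponding orbit appears, and all solutions are reached from the single relevant initial triple. I do not anticipate a substantive obstacle here—the heavy lifting has been done in \Cref{Aigner's theorem} and \Cref{1Markov}; the only subtlety is noting that $r$ is \emph{a priori} rational and invoking \Cref{Atiyah} to promote it to an integer, after which everything falls into place.
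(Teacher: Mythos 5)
Your proposal is correct and follows essentially the same route as the paper: subtract $F(t)$ to get a monic integer polynomial, use \Cref{Atiyah} to force $\tfrac{x_1^2+x_2^2+x_3^2}{x_1x_2x_3}\in\mbZ$, apply \Cref{Aigner's theorem} to pin it to $1$ or $3$, and then invoke \cite[Theorem 2.3]{Lam16} and \Cref{1Markov} for the mutation orbits. Your added remark that the mutation invariance of $\mcT_1$ keeps the two orbits separate is a useful clarification the paper leaves implicit, but it is not a different argument.
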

\begin{proof}
%check 333 mutation $3\mu_i(x,y,z)=\mu_{i}(3x,3y,3z)$, $$3\mu_i\mu_j(x,y,z)=\mu_{i}\mu_j(3x,3y,3z)=\mu_i(3\mu_j(x,y,z))=3\mu_i\mu_j(x,y,z).$$ $$(a,b,c)=\mu_{i_1}\dots \mu_{i_m}(1,1,1), (3a,3b,3c)=3(a,b,c)=3\mu_{i_1}\dots \mu_{i_m}(1,1,1)=\mu_{i_1}\dots \mu_{i_m}(3,3,3).$$
Let $G(X)=F(X)-F(t)$ and it is a monic polynomial over $\mbZ$. Then, the \Cref{F222} is equivalent to 
	\begin{align}
G\left(\dfrac{x_1^2+x_2^2+x_3^2}{x_1x_2x_3}\right)=0.\label{G222}
	\end{align}
	According to \Cref{Atiyah}, the \Cref{G222} has positive integer solutions if and only if \begin{align}\frac{x_1^2+x_2^2+x_3^2}{x_1x_2x_3}\in \mbZ_{\geq 0}.\notag\end{align} Then, by \Cref{Aigner's theorem}, we have \begin{align}\frac{x_1^2+x_2^2+x_3^2}{x_1x_2x_3}=1\ \text{or}\ \frac{x_1^2+x_2^2+x_3^2}{x_1x_2x_3}=3,\notag\end{align} which implies that $F(t)=F(1)$ or $F(t)=F(3)$. Furthermore, based on \Cref{1Markov} and \cite[Theorem 2.3]{Lam16}, the proposition holds for $F(1)=F(3)$ and $F(1)\neq F(3)$.

\end{proof}

\begin{proposition}\label{Lampe case}
	Let $F(X)\in \mbZ[X]$ be a non-constant monic polynomial and $t\in \mbZ$. The Diophantine equation \begin{align}
F\left(\dfrac{x_1^2+x_2^4+x_3^4+2x_1x_2^2+2x_1x_3^2}{x_1x_2^2x_3^2}\right)=F(t) \label{F124}
	\end{align} has positive integer solutions if and only if $F(t)=F(7)$. Under this case, all the positive integer solutions can be obtained from the initial solution $(1,1,1)$ by finite cluster mutations of $\mcA_L$.
\end{proposition}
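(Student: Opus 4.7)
The plan is to mirror the argument of \Cref{Prop case}, exploiting the fact that for the variant Markov invariant there is only a single admissible integer value ($k=7$), so no case split on $F(t)$ is needed.

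First, I would set $G(X)=F(X)-F(t)\in\mbZ[X]$. Since $F$ is monic and non-constant, so is $G$. The given equation is then equivalent to
\begin{align*}
G\!\left(\dfrac{x_1^2+x_2^4+x_3^4+2x_1x_2^2+2x_1x_3^2}{x_1x_2^2x_3^2}\right)=0.
\end{align*}
Assume a positive integer triple $(x_1,x_2,x_3)$ satisfies this. Then the rational number
\begin{align*}
r:=\dfrac{x_1^2+x_2^4+x_3^4+2x_1x_2^2+2x_1x_3^2}{x_1x_2^2x_3^2}\in\mbQ_{>0}
\end{align*}
is a root of the monic polynomial $G(X)\in\mbZ[X]$. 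By \Cref{Atiyah}, $r$ must lie in $\mbZ$, and since $r>0$ it follows that $r\in\mbN_+$. Writing $r=k$, the triple $(x_1,x_2,x_3)$ is a positive integer solution of the Diophantine equation
\begin{align*}
x_1^2+x_2^4+x_3^4+2x_1x_2^2+2x_1x_3^2=k\,x_1x_2^2x_3^2.
\end{align*}

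Now I would invoke \Cref{analogue}: such a positive integer solution exists only when $k=7$. Hence $r=7$, which forces $G(7)=0$, i.e.\ $F(t)=F(7)$. Conversely, if $F(t)=F(7)$, then every positive integer solution $(x_1,x_2,x_3)$ of $\mcT_2(x_1,x_2,x_3)=7$ automatically satisfies \Cref{F124}, and by the same token (applied in reverse via \Cref{Atiyah}), every solution of \Cref{F124} satisfies $r=7$. So the positive integer solution sets of \Cref{F124} and of $\mcT_2(x_1,x_2,x_3)=7$ coincide.

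Finally, I would quote \cite[Theorem 2.6]{Lam16}, which asserts that every positive integer solution of $\mcT_2(x_1,x_2,x_3)=7$ is obtained from $(1,1,1)$ by a finite sequence of cluster mutations in $\mcA_L$. This completes the argument. The only non-routine ingredient is \Cref{analogue} (already proved in \Cref{main section}); the rest is a direct transplant of the scheme used in \Cref{22case}, \Cref{14case}, and \Cref{Prop case}, simplified by the uniqueness of the admissible value $k=7$, which eliminates the branching present in the Markov case.
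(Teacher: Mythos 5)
Your proposal is correct and follows essentially the same route as the paper: reduce to $G(X)=F(X)-F(t)$, use \Cref{Atiyah} to force the invariant to be a positive integer, apply \Cref{analogue} to conclude that integer must be $7$, and then cite \cite[Theorem 2.6]{Lam16} for the mutation structure of the solutions. No gaps.
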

\begin{proof}
	Let $G(X)=F(X)-F(t)$ and it is a monic polynomial over $\mbZ$. Then, the \Cref{F14} is equivalent to 
	\begin{align}
G\left(\dfrac{x_1^2+x_2^4+x_3^4+2x_1x_2^2+2x_1x_3^2}{x_1x_2^2x_3^2}\right)=0.\label{G124}
	\end{align} According to \Cref{Atiyah}, the \Cref{G124} has positive integer solutions if and only if \begin{align}\frac{x_1^2+x_2^4+x_3^4+2x_1x_2^2+2x_1x_3^2}{x_1x_2^2x_3^2}\in \mbZ_{\geq 0}.\notag\end{align} Then, by \Cref{analogue}, we have \begin{align}\frac{x_1^2+x_2^4+x_3^4+2x_1x_2^2+2x_1x_3^2}{x_1x_2^2x_3^2}=7,\notag\end{align} which implies that $F(t)=F(7)$. Hence, based on \cite[Theorem 2.6]{Lam16}, all the positive integer solutions can be generated by the initial solution $(1,1,1)$ through finite cluster mutations of $\mcA_{P}$.
\end{proof}
\begin{remark}
	Finally, we conjectured that the Laurent mutation invariants of rank $3$ cluster algebras with irreducible sign-equivalent exchange matrices are generated by \eqref{Lampe mutation invariant} or \eqref{Lampe mutation invariant 3}.
\end{remark}

%\section{Application 2: Diophantine equations with three variables}
%\begin{conjecture}
	
%\end{conjecture}
%=======================================
\subsection*{Acknowledgements}
The authors would like to express heartfelt thanks to Yu Ye and Zhe Sun for their support and help. The authors are also grateful to Tomoki Nakanishi and Ryota Akagi for their useful comments and suggestions. Z. Chen is supported by the China Scholarship Council (Grant No. 202406340022) and National Natural Science Foundation of China (Grant No. 124B2003).
%=======================================
\newpage

\end{document}